\numberwithin{equation}{section}
\newcounter{smallarabics}
\newenvironment{arabicenumerate}
{\begin{list}{{\normalfont\textrm{(\arabic{smallarabics})}}}
  {\usecounter{smallarabics}\setlength{\itemindent}{0cm}
   \setlength{\leftmargin}{5ex}\setlength{\labelwidth}{4ex}
   \setlength{\topsep}{0.75\parsep}\setlength{\partopsep}{0ex}
   \setlength{\itemsep}{0ex}}}
{\end{list}}
\newcommand{\ben}{\begin{arabicenumerate}}
\newcommand{\een}{\end{arabicenumerate}}
\newtheorem{theorem}{Theorem}[section]
\newtheorem{assumption}{Hypothesis}[section]
\newtheorem{proposition}[theorem]{Proposition}
\newtheorem{lemma}[theorem]{Lemma}
\newtheorem{corollary}[theorem]{Corollary}
\theoremstyle{definition}
\newtheorem{definition}[theorem]{Definition}
\newtheorem{remark}[theorem]{Remark}
\newtheorem{example}[theorem]{Example}
\newcommand{\beq}{\begin{equation}}
\newcommand{\eeq}{\end{equation}}
\newcommand{\bea}{\begin{aligned}}
\newcommand{\eea}{\end{aligned}}
\newcommand{\bex}{\begin{example}}
\newcommand{\eex}{\end{example}}
\def\bel{\begin{lemma}}
\def\eel{\end{lemma}}
\def\bet{\begin{theorem}}
\def\eet{\end{theorem}}
\def\bed{\begin{definition}}
\def\eed{\end{definition}}
\def\ber{\begin{remark}}
\def\eer{\end{remark}}
\def\beproof{\noindent{\bf Proof.}\ }
\renewenvironment{proof}{\beproof}{\qed}
\renewcommand{\S}{$\mathsection$}
\renewcommand{\mod}{\,\mbox{ mod } \,}
\newcommand{\step}[1]{{\noindent\emph{Step #1.}} }
\renewcommand{\sec}[1]{\S\ref{#1}}
\def\fantom{\\ &\phantom{=}\,}
\renewcommand{\leq}{\leqslant}
\renewcommand{\geq}{\geqslant}
\newcommand{\open}[1]{\mathopen{}\mathclose{\left]#1 \right[}}
\newcommand{\clopen}[1]{\mathopen{}\mathclose{\left[#1 \right[}}
\newcommand{\opencl}[1]{\mathopen{}\mathclose{\left]#1 \right]}}
\newcommand{\norm}[1]{\left\|{#1}\right\|}
\newcommand{\module}[1]{\left|#1\right|}
\newcommand{\bra}{\langle}
\newcommand{\ket}{\rangle}
\def\st{{ \ |\  }}
\newcommand*{\defeq}{:=}				
\def\bdf{{boundary defining function }}
\renewcommand{\iff}{\,\xLeftrightarrow{\phantom{--}}\,}
\def\rr{{\mathbb R}}
\def\zz{{\mathbb Z}}
\def\cc{{\mathbb C}}
\def\nn{{\mathbb N}}
\def\cH{{\mathcal{H}}}
\def\cI{{\mathcal{I}}}
\def\cC{{{C}}}
\def\cD{\mathcal{D}}
\def\cX{\mathcal{X}}
\def\cJ{\mathcal{J}}
\def\cP{\mathcal{P}}
\renewcommand{\Re}{\operatorname{Re}}
\DeclareMathOperator{\Diff}{Diff}
\DeclareMathOperator{\supp}{supp}
\DeclareMathOperator{\fp}{fp}
\DeclareMathOperator{\wf}{WF}
\newcommand{\ei}{\clopen{0,\varepsilon}}
\newcommand{\diag}{\mathrm{diag}}
\newcommand{\one}{{I}}
\newcommand{\inv}{{\scriptscriptstyle (-1)}}
\newcommand{\Diffb}{{\mathrm{Diff}}_{\b}}
\newcommand{\DN}{\Lambda}
\newcommand{\wo}{\square_{h}}
\newcommand{\cons}{2^{-2\nu} \frac{\Gamma(-\nu)} {\Gamma(\nu)}}
\def\p{\partial}
\def\cf{\cC^\infty}
\def\cdf{\dot\cC^\infty}
\def\distr{\cC^{-\infty}}
\def\pX{\partial X}
\def\zero{0}
\def\12{\frac{1}{2}}
\newcommand{\dir}{{\scriptscriptstyle\rm D}}
\newcommand{\pnu}{(\nu)}
\def\b{{\rm b}}
\def\c{{\rm c}}
\def\ph{{\rm ph}}
\def\loc{{\rm loc}}
 \author{Alberto Enciso}
 \address{Instituto de Ciencias Matem\'aticas, Consejo Superior de
   Investigaciones Cient\'\i ficas, Madrid, Spain}
 \email{aenciso@icmat.es}
 \author{Gunther Uhlmann}
 \address{Department of Mathematics, University of Washington, Seattle, Washington, USA} 
 \email{gunther@math.washington.edu}
\author{Micha{\l} Wrochna} 
\address{Mathematical Institute, Universiteit Utrecht, Utrecht, The Netherlands \vspace{-0.3cm}} \address{Mathematics \& Data Science, Vrije Universiteit Brussel, Brussels, Belgium}
 \email{{m.wrochna@uu.nl}}
\title[The Dirichlet-to-Neumann map  on AdS spaces and holography]{The Dirichlet-to-Neumann map  on asymptotically \\ anti-de Sitter spaces and holography}
\begin{document}

\begin{abstract} We consider the Klein--Gordon equation on asymptotically anti-de Sitter spacetimes, and show that the forward Dirichlet-to-Neumann map (or  scattering matrix) is a fractional power of the boundary wave operator modulo lower order terms in the sense of paired Lagrangian distributions. We use it to show that, outside of a countable set of mass parameters, the Dirichlet-to-Neumann map determines the Taylor series of the bulk metric at the boundary,  and hence allows the recovery of a real analytic metric or Einstein metric modulo isometries. Furthermore, we prove a Lorentzian version of the Graham--Zworski theorem relating poles of the Dirichlet-to-Neumann map to conformally invariant powers of the boundary wave operator.
\end{abstract}

\maketitle

\newcommand\blfootnote[1]{%
  \begingroup
  \renewcommand\thefootnote{}\footnote{#1}%
  \addtocounter{footnote}{-1}%
  \endgroup
}

\blfootnote{2020 Mathematics Subject Classification.
Primary 35L05, 35R30;
Secondary 35S30, 58J45, 58J47.}


\section{Introduction}

\subsection{The Dirichlet-to-Neumann map}

If $(X,g)$ a $(d+1)$-dimensional compact Riemannian manifold with boundary~$\partial X$ and the constant~$\lambda$ is not a Dirichlet eigenvalue, the {\em Dirichlet-to-Neumann}\/ map is defined as $f\mapsto\partial_{\nu} u$, where~$u$ is the only solution to the problem
\begin{equation}\label{E.Deltag}
(\Delta_g+\lambda)u=0,\quad u|_{\partial X}=f\in C^\infty(\partial X).
\end{equation}
This map can be thought of as the square root of the Laplacian operator of the boundary~$\partial X$ up to a multiplicative constant and lower order terms. More precisely, it is an elliptic pseudodifferential operator on~$\partial X$ of order~1 whose principal symbol $\sigma_1(x,\xi)$ is the norm of~$\xi$ computed with the induced metric on $T^*\partial X$ \cite{Lee1989}. 

There is a vast literature on the Dirichlet-to-Neumann map as it plays a central role in diverse mathematical and applied contexts, ranging from water waves and spectral geometry to numerical methods and medical imaging. In particular it is featured prominently in the inverse problems literature, where one is often interested in recovering geometric information about the bulk from boundary measurements. In this setting, a fundamental property is that the full symbol of the Dirichlet-to-Neumann map  determines the Taylor series of the metric at the boundary~\cite{Lee1989}. Furthermore, the work of Lassas--Uhlmann \cite{Lassas2001} and Lassas--Taylor--Uhlmann \cite{Lassas2003} shows that two real-analytic manifolds with the same Dirichlet-to-Neumann operator are necessarily isometric. Without the analyticity assumption, Daud\'e, Helffer, Kamran and Nicoleau~\cite{DKN2024} have recently shown that there exist pairs of non-isometric metrics, of arbitrarily high (finite) $C^k$~regularity, which give rise to the same Dirichlet-to-Neumann map for $\lambda\neq0$.

Still in the Riemannian setting, suppose now that $(X,g)$ is an asymptotically hyperbolic manifold. In coordinates $(x,y^0,\dots,y^{d-1})$ near the boundary $\pX=\{x=0\}$, the metric then reads 
$
g = \big(dx^2 + h(x,y,dy)\big)/{x^2}
 $ so the metric coefficients are no longer smooth up to the boundary. One can then consider an analogue of the Dirichlet-to-Neumann map, also called {\em scattering matrix}\/ in geometric scattering theory~\cite{melrose,Guillope1997,grahamzworski}. The results in that setting are quite different: as shown by Joshi and S\'a Barreto \cite{Joshi2000}, the Dirichlet-to-Neumann map is an elliptic pseudodifferential operator whose order depends on the constant~$\lambda\in \cc$. Specifically, writing $\lambda = \nu^2-\frac{d^2}4$,   the map is, up to a multiplicative constant and lower-order terms, the $(\nu - \tfrac{d}{2})^{\mathrm{th}}$ power of the Laplacian on~$\partial X$ induced by the boundary metric $h(0)=h(0,y,dy)$. Using  earlier results of Mazzeo--Melrose \cite{guillarmou2005meromorphic} on asymptotically hyperbolic resolvents (cf.~Guillarmou \cite{guillarmou2005meromorphic}), Joshi and S\'a Barreto succeeded in showing~\cite{Joshi2000} that, again, the symbol of the Dirichlet-to-Neumann map determines the Taylor series of the asymptotically hyperbolic metric at the boundary for all $\nu$ except for a discrete set of energies, cf.~\cite{Munoz-Thon2025} for a an analogous result for a particular value of $\nu$ in that set. Furthermore, in the case of conformally compact Einstein manifolds, Guillarmou--S\'a Barreto~\cite{Guillarmou2009} showed it is possible to recover the bulk manifold up to isometries, building on earlier works of Lassas--Uhlmann \cite{Lassas2001} and Lassas--Taylor--Uhlmann \cite{Lassas2003}.
 
A deep connection with conformal geometry arises if one further assumes that the asymptotically hyperbolic manifold $(X,g)$ is Einstein (or, more generally, Einstein up to errors that vanish fast enough on~$\partial X$). In this case, Graham and Zworski showed~\cite{grahamzworski} that the Dirichlet-to-Neumann map is a meromorphic function of the mass parameter~$\nu$, and that its residues at the poles are given by the so-called ``conformally invariant powers of the Laplacian'' $L_k$, which are indeed conformally invariant differential operators on~$\partial X$, {\em natural}\/ in the sense that they can be
written in terms of covariant derivatives and curvature of a representative
metric $h(0)$, and with the same principal part as the $k^{\mathrm{th}}$ power of the Laplacian. The operators $L_k$  play an important role in conformal geometry because of their relationships with invariants, and the striking connection with analysis provided by Graham--Zworski theorem yields  a useful  tool for the  study of the latter.
 
 \medskip
 
In high energy physics, asymptotically hyperbolic spaces play the role of Riemannian analogues of asymptotically \emph{anti-de Sitter spacetimes}. The importance of these Lorentzian manifolds is highlighted by the AdS/CFT correspondence,  a conjectural duality between a quantum theory of gravity on anti-de Sitter spaces and a field theory on the boundary \cite{maldacena1999large}. The conjecture motivates the question of whether objects inherent to classical and quantum fields on the boundary determine geometric degrees of freedom in the bulk.
 
  In this paper we address this problem in the setting of linear Klein--Gordon fields on a fixed asymptotically anti-de Sitter spacetime $(X,g)$. This boils  down  to asking  whether the Lorentzian Dirichlet-to-Neumann map (called \emph{boundary-to-boundary propagator} in the physics literature) determines the metric $g$ modulo natural obstructions. 
  
  Despite analogies with the asymptotically hyperbolic setting, the Lorentzian signature is in many ways different and major difficulties arise due to the lack of ellipticity. The same issues already occur in the Lorentzian analogue of the first setting, \eqref{E.Deltag}, namely the case in which $(X,g)$ is a smooth Lorentzian manifold with time-like boundary~$\partial X$, which is more straightforward to introduce for illustration. For concreteness let us suppose that it is globally hyperbolic (in the sense of spacetimes with boundary~\cite{sanchez}) with a compact Cauchy surface. Then if $t$ is a time function~\cite{sanchez}, the forward problem for the Klein--Gordon equation
\begin{equation}\label{E.KG}
	(\square_g -\lambda)u=0,\quad u|_{\partial X}=f\in C^\infty_{\c}(\partial X), \quad  \supp u \subset \{ t \geq t_0\} \mbox{ for some } t_0\in\rr
\end{equation}
has a unique solution, and the corresponding (forward) {\em Dirichlet-to-Neumann map}\/  $f\mapsto \partial_{\nu} u$ is therefore well defined. Not much more is known however outside of situations with translation invariance or real analyticity in $t$, a notable exception being results of Alexakis--Feizmohammadi--Oksanen on recovery of zero-order terms under curvature bounds \cite{Alexakis2022,Alexakis2024}, cf.~\cite{Liu2024,Yi2025,Yi2025a}. In particular, a symbolic description  is lacking.

In the case of anti-de Sitter spaces considered here, the metric blows up at $\p X$  similarly as on asymptotically hyperbolic spaces and then Dirichlet and Neumann data are replaced by two leading asymptotics. In this setting we derive  a symbolic expansion of the Dirichlet-to-Neumann map in the calculus of \emph{paired Lagrangian distributions}  introduced by Melrose and Uhlmann~\cite{Melrose1979}.  It consists of a certain class of distributions with wave front set consisting of two intersecting Lagrangian submanifolds, which generalizes  Schwartz kernels of Fourier integral operator. One of its original applications is the symbolic construction of forward inverses away from the boundary; here instead the focus is on obtaining an expansion at the boundary and keeping track on the dependence on the metric $g$.

\subsection{Main results} 

To state our results, let us fix an asymptotically AdS space $(X,g)$ of dimension $d+1$. This means that there exists a boundary defining function~$x$ (positive in the interior of~$X$, with $\partial X=\{x=0\}$, whose differential does not vanish on~$\partial X$) such that, in coordinates $(x,y^0,\dots,y^{d-1})$ near the boundary $\pX=\{x=0\}$, the metric reads
$$
g = \frac{-dx^2 + h(x,y,dy)}{x^2}
 $$  
with $h(x)=h(x,y,dy)$ a smooth family of Lorentzian metrics on~$\partial X$. We also assume that $(X,g)$ is globally hyperbolic with a compact Cauchy surface with boundary and that the metric is even modulo $O(x^3)$; the latter property holds true for instance for Einstein metrics and has the geometric interpretation of the boundary being totally geodesic. Details are given in Hypothesis~\ref{hyp:global} in the main text.

For convenience we use the following conformally rescaled  Klein--Gordon operator
$$
P=x^{\frac{1-d}{2}-2}  (\square_g + \nu^2 - \tfrac{d^2}{4} ) x^{\frac{d-1}{2}}.
$$   
The mass parameter~$\nu$ is allowed to take any positive value, which amounts to saying that $\lambda=\tfrac{d^2}{4}-\nu^2$ takes values in $]-\infty,\frac{d^2}4[$. With this choice of rescaling, the coefficients of the second order derivatives are bounded and the decisive term in the $x$ variable is the Schr\"odinger operator with inverse square potential $- \p_x^2 + \big(\nu^2-{\textstyle\frac{1}{4}}\big){x^{-2}}$. 

With these conventions, smooth, forward supported solutions of the Klein--Gordon equation $Pu=0$ can be written as
$$
u = x^{\12-\nu} u_- + x^{\12+\nu} u_+ + \, {\rm l.o.t.}, \quad u_\pm\in C^\infty(\pX). 
$$
The two leading asymptotics $u_-$ and $u_+$ play the role of generalized Dirichlet and Neumann data, with $u_-$ a multiple of $(x^{\nu-\12}u)|_{ x=0}$ and $u_+=(x^{-2\nu} x\p_x x^{\nu-\12} u) |_{ x=0}$ provided $u$ is sufficiently regularity.     The {\em Dirichlet-to-Neumann map}\/, or scattering matrix, is then the operator $\Lambda_g(\nu)$ which maps $ u_-\mapsto u_+$ for each (approximate) forward-supported solution of the equation $Pu=0$ (see Section~\ref{ss:AdS} for details). 

Our first main result, cf.~Theorems~\ref{thm:DN} in the main text, shows that the Dirichlet-to-Neumann map~$\Lambda_g$ is in a space of paired Lagrangian distributions introduced in Subsection~\ref{ss:pld}, and that it coincides with a fractional power of the boundary wave operator $\square_{h(0)}$ modulo lower order terms: 

\begin{theorem}\label{thm:DN0} Let $(X,g)$ be a $(d+1)$-dimensional asymptotically AdS space satisfying Hypothesis~\ref{hyp:global} and let  $\nu>0$ such that $\nu\notin\frac12\nn_0$. The Dirichlet-to-Neumann map $\DN_{g}(\nu)$ of the Klein--Gordon operator $\square_g + \nu^2 - \tfrac{d^2}{4}$  is a paired Lagrangian distribution in   $I^{\nu-\12,\nu+\12}(\pX\times \pX; {\Lambda}_0,{\Lambda}_1)$  satisfying
 \beq\label{eq:toprove12}
\Lambda_{g}(\nu)= -2^{-2\nu+1} \frac{\Gamma(1-\nu)}{\Gamma(\nu)}  (\square_{h(0)})^{\nu} \mod I^{\nu-\frac{3}{2},\nu-\12}(\pX\times \pX; {\Lambda}_0,{\Lambda}_1).
 \eeq
 \end{theorem}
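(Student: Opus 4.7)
My approach is to build a microlocal parametrix for the forward problem near~$\pX$, read off the map $u_-\mapsto u_+$ from the asymptotic expansion of this parametrix, and recognize the resulting operator as a fractional power of $\square_{h(0)}$ within the paired Lagrangian calculus. Using Hypothesis~\ref{hyp:global} I would first write, in coordinates $(x,y)$ near~$\pX$,
\[
P = -\p_x^2 + \big(\nu^2-\tfrac14\big) x^{-2} - \square_{h(0)} + x^2\, R(x,y,\p_y),
\]
with $R$ a smooth family of second-order tangential operators; the absence of a first-order odd-in-$x$ term comes from the even-modulo-$O(x^3)$ assumption. The indicial roots at $x=0$ are $\12\pm\nu$, and since $\nu\notin\12\nn_0$ no logarithmic obstructions appear in the formal series solutions.

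The two independent formal solutions of the leading model can be written, after symbolic separation of variables, as $x^{\12} J_{\pm\nu}\!\big(x(-\square_{h(0)})^{\12}\big)$, interpreted via a Bessel-type functional calculus of $-\square_{h(0)}$. Among these I would select the unique linear combination whose associated bulk distribution is forward-supported, which is the Lorentzian analogue of the outgoing Hankel function and pins down the Feynman/forward branch of $(\square_{h(0)})^\nu$. The contributions of $R$ would then be corrected iteratively, order by order in $x$, by solving transport-type equations at the boundary, producing a parametrix $E(\nu)$ such that $P E(\nu)u_-$ vanishes to infinite order at $\pX$ and $x^{\nu-\12}E(\nu)u_-|_{x=0}$ is proportional to the given datum~$u_-$.

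Writing the expansion
\[
E(\nu)u_- = x^{\12-\nu}u_- + x^{\12+\nu}\, A(\nu)u_- + \text{l.o.t.},
\]
one identifies $\DN_g(\nu)=A(\nu)$ modulo smoothing. At leading order, $A(\nu)$ is the ratio of the $x^{\12+\nu}$- and $x^{\12-\nu}$-coefficients in the selected combination of Bessel solutions, and via the functional calculus this ratio is, up to a $\Gamma$-factor and a power of $2$, precisely $(\square_{h(0)})^\nu$. Matching the small-argument expansions $J_{\pm\nu}(z)\sim (z/2)^{\pm\nu}/\Gamma(1\pm\nu)$ against the coefficients of the forward combination yields the prefactor $-2^{-2\nu+1}\Gamma(1-\nu)/\Gamma(\nu)$ in~\eqref{eq:toprove12}. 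The iterative corrections from $R$ generate terms whose orders on both Lagrangians drop by one, placing them in $I^{\nu-\frac32,\nu-\12}$.

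The final task is to verify membership in $I^{\nu-\12,\nu+\12}(\pX\times\pX;\Lambda_0,\Lambda_1)$, with $\Lambda_0$ the conormal bundle of the diagonal and $\Lambda_1$ its flow-out under the Hamilton flow of $\sigma(\square_{h(0)})$. The main obstacle, and the most delicate point of the argument, is the rigorous construction of the forward branch of $(\square_{h(0)})^\nu$ as a distribution in this paired Lagrangian class, together with the verification that the Bessel-type functional calculus of the non-elliptic operator $-\square_{h(0)}$ really produces an element of $I^{\nu-\12,\nu+\12}$ with the correct principal symbols on both Lagrangians; this is precisely where the Melrose--Uhlmann calculus is indispensable. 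Once this microlocal identification is in place, tracking the remainders from the iterative parametrix construction into $I^{\nu-\frac32,\nu-\12}$ becomes a comparatively routine order-counting argument.
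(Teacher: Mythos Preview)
Your overall shape --- build a near-boundary parametrix governed by the Bessel operator, read off the ratio of $x^{\frac12\pm\nu}$ coefficients, and identify this with a fractional wave power in the paired Lagrangian calculus --- is close to the paper's strategy, and you correctly locate the hard point. But there are two genuine gaps.

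First, the expression $x^{\frac12}J_{\pm\nu}\big(x(-\square_{h(0)})^{\frac12}\big)$ is not defined: $\square_{h(0)}$ is hyperbolic, so there is no functional calculus producing $(-\square_{h(0)})^{\frac12}$, let alone Bessel functions of it. The paper does \emph{not} attempt this. Instead it Hankel-transforms in $x$, so that the model $P_0=N_\nu+\square_{h(0)}$ becomes the family $\xi^2+\square_{h(0)}$, and the forward Dirichlet inverse is $\cH_\nu^{-1}(\xi^2+\square_{h(0)})^{-1}\cH_\nu$, where $(\xi^2+\square_{h(0)})^{-1}$ is the \emph{forward} propagator for each fixed $\xi$. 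The Dirichlet-to-Neumann map is then obtained via $\Lambda_g(\nu)=(2\nu)^{-1}\gamma_+P^{-1}\gamma_+^*$ and a regularized integral $\fint_0^\infty\xi^{2\nu+1}(\xi^2+\square_{h(0)})^{-1}d\xi$, which is exactly the paper's \emph{definition} of $(\square_{h(0)})^\nu$. This decoupling (Bessel analysis in $x$, hyperbolic analysis in $y$, joined by the spectral parameter $\xi$) is the key idea your sketch is missing.

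Second, your step ``select the unique linear combination whose associated bulk distribution is forward-supported, the Lorentzian analogue of the outgoing Hankel function'' conflates two independent choices. In the Riemannian case the Hankel function is singled out by $L^2$ decay as $x\to\infty$; here there is no such issue. The Dirichlet condition at $x=0$ is what selects $J_\nu$ over $J_{-\nu}$ (equivalently, the Hankel transform of order $\nu$), while forward support is a condition in the \emph{tangential} time variable and enters only through the choice of forward inverse $(\xi^2+\square_{h(0)})^{-1}$. No combination of $J_\nu,J_{-\nu}$ in $x$ encodes causality on the boundary.

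Two smaller points. The claim that there is no first-order odd-in-$x$ term is not quite right: $P$ contains $c(x)\big(x\p_x+\tfrac{d-1}{2}\big)$, with $c\in C^\infty(X)$ thanks to the even-mod-$O(x^3)$ hypothesis, and this term must be carried through the iteration. And your assertion that each correction drops both paired-Lagrangian orders by one is not automatic: commuting powers of $x$ through $P_0^{-1}$ mixes Hankel orders $\nu$ and $\nu+1$, and the paper needs a dedicated algebra $M^\ell_{\nu,\nu+1}(\rr_+)$ of Hankel multipliers together with the $\xi$-dependent paired Lagrangian classes $S^mI^{p,l}$ (and a finite-part analysis as $x\to0$) to make the order-counting rigorous.
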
 
 
Using the construction behind Theorem~\ref{thm:DN0} one can show that~$\Lambda_g(\nu)$ encodes much geometric information about the asymptotically AdS metric~$g$. Specifically, our second result, which is a consequence of Theorems~\ref{T.Taylor} (and Corollary \ref{T.analytic}) and \ref{E.Einstein}, can be stated as follows:

\begin{theorem}\label{thm:main1}
Consider  a pair of asymptotically AdS spacetimes $(X_1,g_1)$, $(X_2,g_2)$ satisfying the same hypotheses as above and such that $\pX_1=\pX_2=Y$. For any $\nu>0$ except possibly for a countable set, if the corresponding  Dirichlet-to-Neumann maps are equal (i.e., $\Lambda_{g_1}(\nu)=\Lambda_{g_2}(\nu)$), then the Taylor series of the metric at the boundary coincides. In particular, if the metrics are real analytic then they are isometric.

Furthermore, if $(X_1,g_1)$ and $(X_2,g_2)$ are Einstein (but not necessarily real analytic) and satisfy the null convexity condition\footnote{See Definition \ref{nullco} in the main text.},
there exists a neighborhood $X_i'$ of~$Y$ in~$X_i$ such that $(X_1',g_1)$ and $(X_2',g_2)$ are isometric. 
\end{theorem}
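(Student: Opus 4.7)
The plan is to combine the full symbolic expansion of $\DN_g(\nu)$ constructed (beyond the leading order \eqref{eq:toprove12}) in the proof of Theorem~\ref{thm:DN0} with either real-analyticity or an Einstein unique-continuation argument, in order to pass from equality of the Dirichlet-to-Neumann maps to equality of the metrics.

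First I would exploit the recursive nature of the paired Lagrangian parametrix lying behind Theorem~\ref{thm:DN0}. The leading symbol is a fixed nonzero multiple of $(\square_{h(0)})^\nu$ and hence determines $h(0)$, while the successive lower order terms in $I^{\nu-\frac12-k,\nu+\frac12-k}$ are linear, modulo previously determined data, in the Taylor coefficients $\partial_x^k h(0)$, with coefficient given by the indicial/transport operator of $P$ at the regular singular point $x=0$. This coefficient is invertible except when $2\nu$ resonates with an indicial root, which excludes at most a countable subset of $\nu$ (containing $\frac12\nn_0$). Outside this set, $\DN_{g_1}(\nu)=\DN_{g_2}(\nu)$ forces the Taylor series of $h_1$ and $h_2$ at~$Y$ to coincide; this is Theorem~\ref{T.Taylor}. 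If the $g_i$ are real analytic, the Taylor series equality identifies the germs at $Y$, and a Lorentzian adaptation of the analytic-continuation arguments of Lassas--Uhlmann \cite{Lassas2001} and Lassas--Taylor--Uhlmann \cite{Lassas2003}, propagating along paths in the interior of the globally hyperbolic bulk, upgrades this to a global isometry, yielding Corollary~\ref{T.analytic}.

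For the Einstein case (Theorem~\ref{E.Einstein}) I would place both metrics in Fefferman--Graham normal form near $Y$. The Einstein equations then determine the entire formal expansion of $h(x)$ in terms of $h(0)$ and a single undetermined tensor appearing at order $x^d$, which is precisely the piece of $\DN_g(\nu)$ beyond the universal $(\square_{h(0)})^\nu$ contribution computed above. Hence equality of the Dirichlet-to-Neumann maps forces equality of all Taylor coefficients. Pulling $g_2$ back by a diffeomorphism fixing $Y$, I may assume that $g_1-g_2$ vanishes to infinite order at $Y$ and satisfies a quasilinear wave system derived from Einstein's equations in harmonic gauge. I would then invoke a Lorentzian unique continuation theorem from the timelike boundary $Y$: the null convexity condition supplies the pseudoconvexity required for a boundary Carleman estimate, producing an isometry in some one-sided neighborhood $X_i'$ of $Y$. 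Finally, when null geodesics in $(X_i,g_i)$ have no conjugate points, the boundary isometry extends along the null exponential map and, by the standard rigidity of Lorentzian geodesics under Einstein's equations, identifies $(X_1,g_1)$ with $(X_2,g_2)$ globally.

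The hard part will be the Lorentzian unique continuation at the timelike boundary for the Einstein case. One has to accommodate the regular singularity in $x$ inherited from the asymptotically AdS conformal blowup, and the null convexity hypothesis must be used quantitatively to prevent null rays from becoming tangent to $Y$ to high order, which would defeat any Carleman weight. The subsequent extension from a neighborhood of $Y$ to all of $X_i$ under the no-conjugate-points assumption is more geometric in nature but still requires care in tracking the Lorentzian domain of dependence and in matching the exponential maps on the two sides.
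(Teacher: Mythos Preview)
Your treatment of the Taylor-series step and the real-analytic extension is essentially the paper's: the recursive parametrix behind Theorem~\ref{thm:DN0} makes the $k$-th subprincipal term of $\DN_g(\nu)$ linear (modulo lower data) in $\partial_x^k h(0)$ with a $\nu$-dependent scalar coefficient that vanishes only on a countable set, and analytic continuation along paths gives the global isometry. Minor point: the paper invokes Lee--Uhlmann~\cite{Lee1989} for the analytic extension and needs a topological input ($\pi_1(X_i,\partial X_i)=0$, guaranteed e.g.\ by simple connectedness of $Y$ or by Galloway--Schleich--Witt--Woolgar in the Einstein case), which you should make explicit.

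In the Einstein part your plan diverges from the paper in two places. First, the Fefferman--Graham discussion is a detour: you do not need the Einstein equations to recover the full Taylor series, since Theorem~\ref{T.Taylor} already does that for arbitrary aAdS metrics. The Einstein hypothesis enters only through the unique continuation step, and for that the paper simply invokes Holzegel--Shao~\cite{Holzegel2023} as a black box rather than redeveloping Carleman estimates adapted to the singular conformal factor; your proposed reproof would work but is unnecessary labor.

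The genuine gap is your global extension under the no-conjugate-points assumption. ``Extending along the null exponential map'' together with ``standard rigidity of Lorentzian geodesics under Einstein's equations'' is not an established mechanism: unique continuation for Einstein metrics from a timelike hypersurface into the deep interior is exactly the hard problem, and null geodesics alone do not carry enough information to pin down the metric without further structure. The paper's route is quite different and concrete: the generalized null convexity criterion guarantees a foliation of a collar of $Y$ by null-convex timelike hypersurfaces~\cite{CS22}; one picks such a hypersurface $Y_1\simeq Y_2$ strictly inside the already-identified neighborhood $X_1'\simeq X_2'$, observes that the Dirichlet-to-Neumann maps there for an auxiliary \emph{semilinear} wave equation coincide, and then applies the nonlinear inverse result of Hintz--Uhlmann--Zhai~\cite{Hintz2022a}, whose hypothesis is precisely the absence of conjugate points. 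You should replace your last paragraph with this reduction.
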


This provides a Lorentzian counterpart of the  results of Joshi--S\'a Barreto \cite{Joshi2000} and Guillarmou--S\'a Barreto \cite{Guillarmou2009} for asymptotically hyperbolic manifolds. The extra hypotheses in the second part of the statement enable us to use the unique continuation result of Holzegel--Shao \cite{Holzegel2023}.  We remark that the analysis of asymptotically hyperbolic Einstein manifolds by Guillarmou--S\'a Barreto \cite{Guillarmou2009} uses the real analyticity of the metric and the degenerate ellipticity of the Einstein equation at the boundary,  properties that, however, are not available in the Lorentzian case.

To conclude, our third result is a Lorentzian replacement of Graham and Zworski's theorem  relating conformal invariants at $\pX$ with poles of the Dirichlet-to-Neumann map \cite{grahamzworski}:

\begin{theorem}\label{thm:main2} Let $(X,g)$ be an Einstein asymptotically AdS spacetime as in Theorem~\ref{thm:main1}. Let $k\in \nn$ and assume $k\leq \frac{d}{2}$ if $d$ is even. Then  
 \beq\label{eq:resi}
  L_k  = (-1)^{k+1} 2^{2k} k!(k-1)!   \big( \lim_{\nu\to k} (\nu-k) \Lambda_g(\nu)\big), 
 \eeq
is a conformally invariant differential operator in $\Diff^{2k}(\pX)$ with principal part $\square_{h(0)}^k$. Furthermore, the limit in \eqref{eq:resi} equals the residue of a meromorphic family of paired Lagrangian distributions.
 \end{theorem}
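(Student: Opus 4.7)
The plan is to adapt the Graham--Zworski argument \cite{grahamzworski} to the Lorentzian asymptotically anti-de Sitter setting, using in an essential way Theorem~\ref{thm:DN0} and the paired Lagrangian calculus introduced in Subsection~\ref{ss:pld}.

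First, I would show that the map $\nu \mapsto \Lambda_g(\nu)$ extends to a meromorphic family of paired Lagrangian distributions, of bi-order $(\nu-\tfrac12,\nu+\tfrac12)$, in a complex neighborhood of $\nu=k$, and has at worst a simple pole there. This follows by inspecting the formal construction behind Theorem~\ref{thm:DN0} and tracking the dependence of the boundary transport equations on the mass parameter. The leading part is manifestly meromorphic, since the only factor that is not holomorphic in $\nu$ is $\Gamma(1-\nu)$, which has simple poles at the positive integers. The Einstein (or asymptotically Einstein modulo $O(x^d)$) hypothesis ensures that the lower-order transport terms, built from the curvature of~$h(0)$ and from the even Taylor coefficients of $h(x)$ fixed by the Fefferman--Graham-type expansion, also extend meromorphically with residues that are genuine \emph{differential} operators on $\pX$. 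The dimension restriction $k\leq d/2$ for $d$ even is imposed by the Fefferman--Graham obstruction: beyond this order the expansion coefficients of $h(x)$ cease to be locally determined by $h(0)$ alone, and the recursive construction producing $L_k$ breaks down.

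Second, I would compute the residue at $\nu=k$. Using $\res_{\nu=k}\Gamma(1-\nu)=(-1)^{k}/(k-1)!$, the leading term in~\eqref{eq:toprove12} contributes
\begin{equation*}
\res_{\nu=k}\bigg(\!-2^{-2\nu+1}\,\frac{\Gamma(1-\nu)}{\Gamma(\nu)}\,\square_{h(0)}^{\nu}\bigg) \;=\; \frac{(-1)^{k+1}\,2^{-2k+1}}{((k-1)!)^{2}}\,\square_{h(0)}^{k},
\end{equation*}
so that, after multiplication by the normalization factor in~\eqref{eq:resi}, the leading contribution to $L_k$ is a nonzero scalar multiple of $\square_{h(0)}^{k}$, matching the stated principal part. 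The corrections coming from the paired Lagrangian class $I^{\nu-3/2,\nu-1/2}(\pX\times\pX;\Lambda_0,\Lambda_1)$ contribute, at the integer value $\nu=k$, differential operators on $\pX$ of strictly lower order: at an integer resonance the two Lagrangians pair with kernels whose Mellin transforms have residues supported on the diagonal, yielding differential rather than properly pseudodifferential contributions. This shows $L_k\in\Diff^{2k}(\pX)$ with the asserted principal part and simultaneously proves the last sentence of the theorem.

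Third, to obtain conformal invariance, I would exploit the conformal covariance of $\Lambda_g(\nu)$ under a change of boundary defining function. Any rescaling $h(0)\mapsto e^{2\omega}h(0)$ of the representative boundary metric can be realized by replacing $x$ with $\hat x = e^{\omega}x\,(1+O(x))$, the higher-order corrections being determined by demanding that $\hat x$ remain a geodesic defining function to sufficient order. Under such a change, the two leading asymptotics $u_\pm$ of forward solutions pick up conformal factors $e^{(d/2\mp\nu)\omega}$, yielding the transformation rule
\begin{equation*}
\Lambda_g(\nu) \;\longmapsto\; e^{-(\frac{d}{2}+\nu)\omega}\,\Lambda_g(\nu)\, e^{(\frac{d}{2}-\nu)\omega}.
\end{equation*}
Taking residues at $\nu=k$ converts the continuous weights $d/2\pm\nu$ into the integer weights $d/2\pm k$ characteristic of the GJMS-type conformally invariant operators, so $L_k$ is conformally invariant in the required sense. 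The main obstacle, I expect, will be the second step: rigorously controlling the lower-order paired Lagrangian contributions to $\Lambda_g(\nu)$ uniformly in $\nu$ near the resonance $\nu=k$, where the indicial roots $\tfrac12\pm\nu$ differ by an integer and formal solutions of $Pu=0$ generically develop logarithmic terms. This forces a parametrix construction in which the simple pole at $\nu=k$ is exposed explicitly and its residue is identified with a local differential expression in $h(0)$ via the Einstein equations; once this meromorphic structure is in hand, the preceding steps combine to give the full statement.
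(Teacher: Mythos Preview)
Your outline captures the Graham--Zworski strategy in spirit, but there is a genuine gap in Step~1 that the paper addresses differently. You propose to show that $\nu\mapsto\Lambda_g(\nu)$ itself extends meromorphically to a complex neighborhood of $\nu=k$. However, as discussed in \S\ref{intro2}, the forward Dirichlet problem is \emph{not} known to be well-posed for $\nu\notin\rr$ outside the product case: the singular $x^{-2}$ term destroys the positivity needed in energy estimates when $\nu$ is complex. Consequently $\Lambda_g(\nu)$ is only defined for $\nu>0$, and the meromorphic object in the last sentence of the theorem is not $\Lambda_g(\nu)$ but the parametrix $\Lambda_g^{(N)}(\nu)=\sum_{j=0}^{N+1}\Lambda_{g,j}(\nu)$ constructed in the proof of Theorem~\ref{thm:DN}. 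The point is that each $\Lambda_{g,j}(\nu)$ is built from the forward inverse $P_0^{-1}$ of the \emph{product} operator, and for that operator alone well-posedness for complex $\nu$ is established (Theorem~\ref{thm:Pinv}, proved in Appendix~\ref{sec:app} via a modified energy form involving $\cI_\nu=\cH_{\bar\nu}^{-1}\cH_\nu$). The real limit in \eqref{eq:resi} and the complex residue of the parametrix then agree because $\Lambda_g(\nu)-\Lambda_g^{(N)}(\nu)$ is regularizing for real $\nu$ near $k$.

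For the identification of the residue as a conformally invariant differential operator, your plan is to read this off the paired Lagrangian expansion and argue that the lower-order classes $I^{\nu-3/2,\nu-1/2}$ produce only differential contributions at integer $\nu$. The paper instead bypasses this analysis entirely: it transplants the formal Graham--Zworski argument verbatim, using Proposition~\ref{prop:gz1} (the real-analytic family of forward Poisson operators) and Proposition~\ref{prop:gz2} (the formal solution at $\nu=k$ with logarithmic term whose boundary coefficient is $-2c_kL_k$). These two propositions are proved exactly as in the Riemannian case, and conformal invariance of $L_k$ comes directly from Proposition~\ref{prop:gz2} via the Fefferman--Graham ambient construction, not from a transformation law for $\Lambda_g(\nu)$. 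Your proposed route via the paired Lagrangian structure of the subleading terms is not obviously wrong, but the claim that ``Mellin transforms have residues supported on the diagonal'' would require justification you have not supplied, whereas the formal-solution approach avoids the issue altogether.
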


Note that, strictly speaking, to make the definitions invariant one needs to make the operators act on half-densities as in e.g.~\cite{grahamzworski,Joshi2000,Guillarmou2009}; we omit this aspect in the notation throughout the paper altogether. The conformal operators $L_k$ are direct analogues of their Riemannian counterparts and play thus the same role in Lorentzian conformal geometry; in particular $L_1$ is the conformal wave operator and $L_2$ the Paneitz operator. On the other hand, the analysis of $\Lambda_g(\nu)$  in the Lorentzian setting is largely different. Also, recall that we are considering only real masses because it is not known whether the forward problem is well-posed if $\nu \not\in \rr$ outside of the product metric case, as discussed in detail in \cite{vasy2012wave} and \S{\ref{intro2}}. Nevertheless, it is possible to interpret formula \eqref{eq:resi} as the residue of a meromorphic operator-valued function by replacing $\Lambda_g(\nu)$ by the parametrix that we construct in the paper: it yields indeed a well-defined meromorphic family with the same residues.

\subsection{Method of proof, bibliographical remarks}\label{intro2} The analysis in the asymptotically hyperbolic case in  \cite{Joshi2000,grahamzworski,Guillarmou2009} relies on a precise parametrix construction of the resolvent with $0,\b$-calculus and blow-up techniques, in which ellipticity  in an appropriate degenerate sense plays a key role. In the Lorentzian case the resolvent is replaced by a forward or backward inverse with Dirichlet boundary conditions and so far no parametrix construction on AdS spaces is known which would address simultaneously the paired Lagrangian distribution behaviour in the interior and the singular structure close to the boundary. Instead, the well-posedness theory relies on energy estimates and microlocal propagation estimates due initially to Vasy \cite{vasy2012wave}, cf.~Hol\-ze\-gel \cite{holzegel2012well}, Warnick \cite{warnick2013massive}, Enciso--Kamran \cite{enciso2015singular},  Gannot--Wrochna \cite{GW}, and Dappiaggi--Marta \cite{Dappiaggi2021,Dappiaggi2022}.

Rather than trying to construct a parametrix for Schwartz kernels of inverses and follow the footsteps of \cite{Joshi2000}, we observe that the Dirichlet-to-Neumann map can be described in terms of perturbations of a normal operator. The latter is associated with the case of an exact product metric in which it is possible to use Bessel functions~\cite{Enciso2017}, concretely the Hankel transform, to handle singular aspects close to the boundary. Perturbing this does not give a good control of the forward inverse directly but we show that decaying correction terms contribute to higher order regularity terms in the resulting parametrix for the Dirichlet-to-Neumann map. 

The main tool that we develop for that purpose are families of paired Lagrangian distributions depending on a frequency parameter $\xi$ in the sense of Hankel transforms or in other words, on the spectral parameter for the  Schrödinger operator $- \p_x^2 + \big(\nu^2-{\textstyle\frac{1}{4}}\big){x^{-2}}$. 
This approach allows us to largely decouple issues at the boundary and the singular behavior inherent to hyperbolic PDE and then show eventually that $\Lambda_g(\nu)$ is a paired Lagrangian distribution which equals a multiple of the boundary wave power $(\square_{h(0)})^{\nu}$ modulo lower order terms. 

Here, we propose a definition of forward complex powers which is shown to coincide with the construction of Joshi \cite{joshi}, originally formulated on static spacetimes, and is consistent with the proposal of  Antoniano--Uhlmann \cite{Antoniano1985}. We remark that if $h(0)$ is e.g.~asymptotically Minkowski, complex powers can also be defined by the spectral theorem thanks to essential self-adjointness of $\square_{h(0)}$ \cite{vasyessential,nakamurataira,JMS} in which case they behave microlocally similarly to Feynman inverses as opposed to forward or backward inverses \cite{Dang2020,Dang2021,Dang}. We do not consider the case of Feynman powers in this paper, though it is interesting to remark that the use of self-adjointness of $\square_{h(0)}$ could lead to simplifications in a Feynman version of our problem.

Difficulties in our strategy are mostly due to the singular behaviour of integrals in $\xi$ as $x\to 0^+$, which leads us to analyze finite part integrals. Another issue is that even simple operations  involve integrating with Bessel functions $J_\nu$ and $J_{\nu+1}$ of different orders, which  leads to expressions that cannot be much simplified unless one assumes $g$ is strictly even.  They retain however some homogeneity properties that we exploit in the proofs.

The inverse problem statement in Theorem \ref{thm:main1} is then a relatively simple  corollary when combined with the already mentioned unique continuation results. It is worth mentioning that in various non-linear settings it has been shown  that, up to natural obstructions, one can recover a Lorentzian metric or related information on wave equation coefficients from the Dirichlet-to-Neumann map, among others in works by Hintz--Uhlmann--Zhai \cite{HUZ,Hintz2022a}, Kurylev--Lassas--Uhlmann \cite{KUL}, Lassas--Liimatainen--Potenciano-Machado--Leyter--Tyni \cite{L,L2} and Uhlmann--Zhai \cite{UZ20}; the techniques are not directly applicable in our case. We note that non-linearity is also exploited in recent proposals in bulk metric reconstruction from entanglement data via minimal surface area variations \cite{tzouads}. 

The proof of Theorem  \ref{thm:main2} combines arguments analogous to the Riemannian case \cite{grahamzworski}  with an analysis of the normal operator, i.e.~of the operator $P$ in the case of a product metric.  Unfortunately  in the AdS setting the presence of an imaginary part in the mass parameter $\nu$ invalidates existing energy estimates rather dramatically. We refer to Vasy \cite[\S3]{vasy2012wave} for a detailed explanation of the problem, which unfortunately persists also if one uses energy estimates in twisted Sobolev spaces (derived in the subsequent works  \cite{holzegel2012well,warnick2013massive,enciso2015singular,GW,Dappiaggi2021,Dappiaggi2022}).  In essence, the singular term $\big(\nu^2-{\textstyle\frac{1}{4}}\big){x^{-2}}$ is effectively of the same order as a second derivative so there is no natural positive definite quantity to base the estimates on if $\nu\notin \rr$.  The new idea here consists in building energy estimates on sesquilinear forms involving a certain composition of Hankel transforms that roughly speaking produces a solution with spectral parameter $\nu$ given a solution with spectral parameter $\bar \nu$, and it turns out that this yields stress-energy tensor terms that have enough positivity to play with. This complicates the estimates by adding extra commutator terms with non-local operators, but overall the modifications turn out to interact quite well with the $\b$-calculus and twisted calculus used in this setting, allowing us to show well-posedness of the forward problem for the normal operator for complex $\nu$ and conclude the proofs. 

\subsection{Structure of the paper}  The paper is organized as follows.

 Section \ref{s:preliminaries} discusses preliminaries on the Klein--Gordon equation on anti-de Sitter spaces, and in \sec{ss:sketch} gives  heuristics on the Dirichlet-to-Neumann map in the product case for the sake of illustration.

Section \ref{sec:hankel} introduces the main tool of Hankel multipliers with values in paired Lagrangian distributions and derives key results on their asymptotics at $x=0$, in particular Proposition \ref{prop:key}, which later serves to estimate contributions of various terms to the Dirichlet-to-Neumann map in paired Lagrangian classes.

Section \ref{sec:DN} applies this to give an approximation of the Dirichlet-to-Neumann map within the calculus of paired Lagrangian distribution classes (Theorem \ref{thm:DN}) and concludes in Theorem \ref{T.Taylor} that $\Lambda_g(\nu)$ determines the Taylor series of $g$ at $x=0$.

Corollaries for Einstein AdS metrics, including the inverse result in Theorem and the Graham--Zworski type result are derived in Section \ref{sec:einstein}.

Finally, Appendix \ref{sec:app} proves the well-posedness of the forward problem for the Klein--Gordon equation with complex mass parameter $\nu$ for $\Re \nu>0$ and Dirichlet boundary conditions.

\section{Preliminaries} \label{s:preliminaries}

\subsection{Asymptotically AdS spacetimes} 	\label{ss:AdS}
Let $X$ be a smooth manifold with boundary $\partial X$ of  dimension $n\geq 2$. We denote by $\cf(X)$ the space of smooth functions on $X$ (in the sense of smooth extendibility  across $\p X$, correspondingly $\cdf(X)$ the subspace of smooth functions vanishing at all orders at $\p X$, and similarly $\cf_\c(X)$ and  $\dot C^\infty_\c(X)$ their compactly supported versions.

Suppose that the interior $X^\circ$ of $X$ is equipped with a smooth Lorentzian metric $g$ of signature $(1,n-1)$. 

\begin{definition} $(X,g)$ is an \emph{asymptotically anti-de Sitter} (aAdS) spacetime if the following conditions are satisfied:
\ben
	\item If $x\in \cf(X)$ is a boundary-defining function of $\p X$, then $\hat g = x^2 g$ extends smoothly to a Lorentzian metric on $X$.\smallskip
	\item The pullback $\hat g|_{\p X}$ of $\hat g$ to the boundary has Lorentzian signature.\smallskip
	\item $\hat g^{-1}(dx,dx) = -1$ on $\partial X$.
\een
\end{definition}

By the Lorentzian analogue of an argument due to Graham--Lee \cite{graham1991einstein}, there exists a boundary-defining function $x \in \cf(X)$ such that
\begin{equation} \label{eq:specialbdf}
\hat g^{-1}(dx,dx) = -1
\end{equation}
in a neighborhood of $\p X$. Given $U \subset \p X$ with compact closure, we can always choose a collar neighborhood of $U$ diffeomorphic to $\clopen{0,\varepsilon}\times U$ in which $x \in \cf(X)$ is identified with the projection onto the first factor. With this identification near $U$,
\beq\label{eq:fform}
g = \frac{-dx^2 + h}{x^2},
\eeq
where $x\mapsto h(x)$ is a family of Lorentzian metrics on $\partial X$ depending smoothly on $x\in [0,\varepsilon)$. In particular, one can choose local coordinates $(x,y^0,\ldots,y^{n-2})$ such that $x$ is a boundary-defining function satisfying  \eqref{eq:specialbdf} and 
\[
\hat g^{-1}(dx,dy^{\alpha}) = 0 \text{ near } \p X,
\]
where $\alpha = 0,\ldots,n-2$.

In order to ensure  the  well-posedness of the forward Dirichlet and Neumann problems for the Klein--Gordon operator and ensure good behavior near $x=0$,  we make the following extra assumption.

\begin{assumption} \label{hyp:global}
We assume  that $(X,g)$ is an aAdS spacetime such that $\hat g = x^2 g$ is globally hyperbolic in the sense of spacetimes with time-like boundary \cite{sanchez}, with a compact Cauchy surface. Moreover, we assume $g$ is even modulo $O(x^3)$, i.e.~the Taylor expansion of $h$ at $x=0$ contains only even terms modulo ${O}(x^{3})$.
\end{assumption}

The assumptions imply that the boundary spacetime $(\pX,h(0))$ is globally hyperbolic in the usual boundaryless sense, and the hypotheses used in \cite{GW} for the well-posedness of forward problems are satisfied. Note that the Cauchy surface compactness is not essential for our results but is assumed for the sake of notational simplicity.

We remark that evenness of $g$ modulo ${O}(x^3)$ is implicit e.g.~in the works  \cite{holzegel2012well,warnick2013massive}, and holds true for Einstein metrics. This assumption implies in particular that $\partial X$ is totally geodesic with respect to $\hat g$. In the context  of asymptotically hyperbolic spaces, the relevance of evenness modulo ${O}(x^{2k+1})$ to the meromorphic continuation of the resolvent was observed by Guillarmou \cite{guillarmou2005meromorphic}. We note that by the same arguments as in the Riemannian signature \cite[Lem.~2.1]{guillarmou2005meromorphic}, evenness modulo ${O}(x^{2k+1})$ is a condition that does not depend on the choice of special boundary defining function.

\subsection{Klein--Gordon operator} \label{subsect:kg}

Let $d=n-1$ be the dimension of $\p X$. Although we are mostly interested in the real-mass case, it is convenient to consider the Klein--Gordon operator with complex mass 
\[
\square_g - \lambda, \mbox{ with } \lambda  \in \cc \setminus {\big[\tfrac{d^2}{4},\infty\big[\,}.
\]
Let us parametrize $\lambda = \tfrac{d^2}{4}-\nu^2$ with $\nu\in \cc$ such that $\Re \nu >0$\footnote{Note that Joshi--S\'a Barreto formulate their results in \cite{Joshi2000} in terms of the shifted variable $\zeta=\nu +\frac{d}{2}$, Graham--Zworski  use the same variable $s=\nu+\frac{d}{2}$ \cite{grahamzworski}; Enciso--Gonz\'alez--Vergara use $\alpha=\nu$.}. Instead of working with $\square_g - \lambda$ directly, one often introduces the operator
\beq\label{eq:P}
\bea
P&=  x^{\frac{1-d}{2}-2}  (\square_g - \lambda) x^{\frac{d-1}{2}}\\
 &=  x^{\frac{1-d}{2}-2}  (\square_g + \nu^2 - \tfrac{d^2}{4} ) x^{\frac{d-1}{2}}\\ &= \square_{\hat g} +   \big(\nu^2-{\textstyle\frac{1}{4}}\big){x^{-2}}.
\eea
\eeq
In the sequel we often write $P(\nu)$ to emphasize the dependence on $\nu$. More explicitly, in the coordinates described in \sec{ss:AdS} one finds that
\beq
\bea
P(\nu) &= - \p_x^2 + \big(\nu^2-{\textstyle\frac{1}{4}}\big){x^{-2}}+c(x)\big(x\p_x +\textstyle \frac{d-1}{2}\big)+\square_{h(x)}\\
&= - \p_x^2 + \big(\nu^2-{\textstyle\frac{1}{4}}\big){x^{-2}}+c(x)\big(x\p_x +\textstyle \frac{d-1}{2}\big)+ x^2 L + 
\square_{h(0)}, 
\eea
\eeq
 where $\square_{h(0)}$ is the wave operator  associated to the Lorentzian metric  $h(0)$  at the boundary,  $L\in \cf(\ei,\Diff^2(\pX))$ and $c\in \cf(X)$ is given by 
\begin{equation} \label{eq:logdetderivative}
c(x) = \12 x^{-1}\partial_x (\log |{\det h(x)}|).
\end{equation}
Note that $c$ is indeed a smooth function  because  $\det h(x)=\det h(0)+O(x^2)$ using the assumption that the metric is even modulo $O(x^3)$. An advantage of the rescaling \eqref{eq:P} is that it emphasizes the relationship with the Schrödinger operator with $x^{-2}$ potential (also called Bessel operator),
$$
N_\nu  \defeq - \p_x^2 + \big(\nu^2-{\textstyle\frac{1}{4}}\big){x^{-2}}.
$$
which has been extensively  studied in the literature  on spectral theory among others including the case $\nu\notin \rr$, see e.g.~\cite{lesch,Bruneau2011,Derezinski2017,gannot2018elliptic,Derezinski2021,Derezinski2023} and references therein.

\subsection{Twisted Sobolev spaces} In what follows we discuss well-posedness of the forward Dirichlet and Neumann problems, starting first by introducing  the relevant spaces of distributions. We follow closely \cite{GW}, with straightforward modifications that account for the fact that we work here with the rescaled operator $P$ rather than with the Klein--Gordon operator $\square_g + \nu^2 - \tfrac{d^2}{4}$, and with necessary adjustments to account for the case of $\nu\notin \rr$. 

We abbreviate 
$$
L^2(X)\defeq L^2(X, \hat g) =L^2(X,x^2 g)
$$
the $L^2$ space associated with the smoothly extensible metric $\hat g$, and denote by $L^2_\loc(X)$, resp.~$L^2_\c(X)$ its local, resp.~compactly supported analogue. The wave operator $\square_g$ is  formally self-adjoint in $L^2(X,g)$, and from the definition of $P$ \eqref{eq:P} and the relation $L^2(X)=x^{\frac{1-d}{2}} L^2(X,g)$ it follows that $P$ is formally self-adjoint in $L^2(X)$.

 To each $\nu$ we associate the following space of \emph{twisted differential operators} of order one:
\[
\Diff_{\nu}^1(X) = \{x^{-\12+\nu}A x^{\nu-\12} \st  A \in \Diff^1(X)\},
\] 
where $\Diff^m(X)$ is the space of differential operators of order $m$ with $\cf(X)$ coefficients. The space $\Diff_{\nu}^1(X)$ is independent of the choice of \bdf $x$.

\begin{definition} The local \emph{twisted Sobolev space} of order $H^1_{\nu,\loc}(X)$ is defined as follows. For $u \in \distr(X)$,
 \[
 u \in H^1_{\nu,\loc}(X) \iff Qu \in L^2_\loc(X) \text{ for all } Q \in \Diff_{\nu}^1(X).
 \]
 We also set  $H^1_{\nu,\rm c}(X) = H^1_{\nu,\loc}(X) \cap \distr_{\rm c}(X)$. \end{definition}

The spaces $H^1_{\nu,\loc}(X)$ and $H^1_{\nu,\c}(X)$ are topologized in the standard way. As usual in boundary-value problems it is useful to introduce the  corresponding  space  $\dot H^1_{\nu,\loc}(X)$, defined by taking the closure of $\dot\cC^\infty(X)$ in the $ H^1_{\nu,\loc}(X)$ topology, and its compactly supported version $\dot H^1_{\nu,\c}(X)=\dot H^1_{\nu,\loc}(X) \cap \distr_{\rm c}(X)$. For $0<\Re \nu< 1$ one can characterize $\dot{H}^1_{\nu,\loc}(X)$ as the intersection
$$
\dot{H}^1_{\nu,\loc}(X)= {H}^1_{\nu,\loc}(X)\cap x L^2_{\loc}(X), 
$$
and similarly for $\dot{H}^1_{\nu,\c}(X)$. For $\Re \nu \geq 1$ we simply have $\dot{H}^1_{\nu,\loc}(X)= {H}^1_{\nu,\loc}(X)$.

Next, we introduce Sobolev spaces referring additionally to conormal regularity, i.e.~regularity with respect to~$\mathcal{V}_\b(X)$, the space of vector fields tangent to the boundary. For the purpose of defining spaces with negative or fractional order it is convenient to use Melrose's $\Psi_\b^m(X)$ calculus. In essence, it is the natural pseudo-differential class generalizing the space of $\b$-differential operators of degree $m$, $\Diff^m_\b(X)$, generated by $k$ fold compositions of vector fields in   $\mathcal{V}_\b(X)$ for $k=0,\dots m$,  see e.g.~\cite{melrose1993atiyah,vasy2008propagation}.

\begin{definition}
	Let $k = 0,\pm 1$ and $m\geq 0$. For $u \in H^k_{\nu,\loc}(X)$,  $u \in H^{k,m}_{\nu,\b,\loc}(X)$ if and only if $Au \in H^k_{\nu,\loc}(X)$ for all $A \in \Psi_{\rm b}^m(X)$. We set  $
	H^{k,\infty}_{\nu,\b,\loc}(X) = \textstyle\bigcap_{m} H^{k,m}_{\nu,\b,\loc}(X).
	$
The spaces $\dot{H}^{k,m}_{\nu,\b,\loc}(X)$ for $k= \pm 1$ are defined analogously,  and the corresponding compactly supported spaces are defined in the usual way.
\end{definition}

\subsection{Trace operators} Assume $\Re\nu>0$.  If $u\in x^{\nu-\12} C^\infty(X) +  x^{\nu+\12} C^\infty(X)$ modulo a lower order polyhomogeneous expansion at $\p X $, then the (generalized) Dirichlet and Neumann data of $u$ are defined by extracting the (suitably normalized) two  leading asymptotics, namely
$$
 \gamma_- u = \lim_{x\to 0^+}(x^{\nu-\12}u(x)), \quad  
 \gamma_+ u =  2 \nu \fp_{x\to 0^+}(x^{-\nu-\12}u)(x)
$$ 

In practice, for the purpose analyzing boundary value problems one needs to give meaning to $\gamma_\pm$ as weighted trace maps on larger spaces.

This is relatively straightforward  for the Dirichlet weighted trace  
\beq
\gamma_- u = (x^{-\12+\nu}u)|_{\pX}
\eeq
 for $0<\Re \nu<1$, defined  initially for  $u\in x^{\12-\nu}\cf(X)$,  which then  extends  as a bounded operator
$
\gamma_- : H^1_{\nu,\loc}(X) \rightarrow H^{\nu}_{\loc}(\p X)$.
As expected, the null space  of $\gamma_-$ on $H^1_{\nu,\loc}(X)$ is precisely  $\dot{H}^1_{\nu,\loc}(X)$. Furthermore, $\gamma_-$  maps 
\beq\label{eq:gdir}
\gamma_-:H^{1,m}_{\nu,\b,\loc}(X)\to H^{\nu+m}_\loc(\pX)
\eeq 
continuously for each $m\geq 0$, as follows from the arguments in \cite[\S3.2--\S3.4]{GW} (here we allow for $\nu$ to be complex but this does not affect the proofs). If $\Re \nu \geq 1$ then $x^{\12-\nu}\cf(X)$  is not in $L^2_{\rm loc}(X)$, so in the $H^1_{\nu,\loc}(X)$ setting the extension of \eqref{eq:gdir} to $\Re \nu \geq 1$ is just $\gamma_-\equiv 0$.

\begin{remark} When $\nu \neq 1/2$, the map $\gamma_-$ depends on the choice of \bdf $x$, though only in a mild way. Namely, if $\tilde x = ax$ is another \bdf with $a \in \cf(X)$ and $a > 0$, then $
 \tilde\gamma_- u = (a|_{\p X})^{-\12+\nu}  \gamma_- u$,
where $\gamma_-$ and $\tilde\gamma_-$ are defined with respect to $x$ and $\tilde x$ respectively.
\end{remark}

The second weighted trace can  be written as
\beq\label{eq:secondtrace}
\gamma_+ u = ( x^{-2\nu+1} \p_x x^{\nu-\12} u) |_{\pX}
\eeq
for $u\in x^{\nu+\12} \cf(X)$. The definition  does not generalize well to $H^{1,m}_{\nu,\b,\loc} (X)$ due to insufficient decay  at $\pX$. However, $\gamma_+$ can be usefully extended to spaces  of approximate solutions of $P(\nu)u=0$ by means of an asymptotic expansion. Namely, 
for $m \in \rr \cup \{\pm \infty\}$ we define the spaces
\[
\cX^m_\nu = \{u \in H^{1,m}_{\nu,\b,\loc} (X) \st P(\nu)u \in H_{\nu,\b,\loc}^{0,m}(X) \},
\]
topologized by graph seminorms. The existence of an expansion for smooth  solutions of $Pu\in\cdf(X)$ is well-known; here we  use the following more precise result which is a straightforward extension of \cite[\S4.4]{GW} beyond the range $0<\nu<1$.

\begin{proposition} \label{lem:graphnormexpansion} Let $m \in \rr \cup \{\pm \infty\}$. If   $0<\Re \nu <1$  and $\nu\neq \12$, then for  $u\in \cX^m_\nu$  the restriction of $u$ to  $\{ x < \varepsilon\}$ admits an asymptotic expansion
	\begin{equation} \label{eq:graphnormexpansion}
	u =  x^{\12-\nu} u_- + x^{\12+\nu} u_+ + x^{2}H^{m+2}_{\rm b} (\ei;H^{m-3}_\loc(\p X)),
	\end{equation} 
where $u_-= \gamma_- u \in H^{m+\nu}_\loc(\p X)$ and $u_+ = (2\nu)^{-1} \gamma_+u \in H^{m-\nu}_\loc(\p X)$. Furthermore, the weighted trace map  \beq\label{eq:gpm}
 \gamma_+ :   \cX^m_\nu \to H^{m-\nu}_\loc(\p X)
 \eeq
is bounded.  If $\Re \nu \geq 1$ and $\nu\notin \frac{1}{2}\nn$ then   \eqref{eq:graphnormexpansion} holds true with $u_-=0$, Furthermore the map $\gamma_+ u = (x^{-\12-\nu}u)|_{\pX}$, defined  initially for  $u\in x^{\12+\nu}\cf(X)$, extends to a bounded operator $\gamma_+ :   H^{1,m}_{\nu,\b,\loc} (X)  \to H^{m-\nu}_\loc(\p X)$.
\end{proposition}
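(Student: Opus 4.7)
The plan is to adapt the b-calculus / Mellin-transform argument carried out in \cite[\S4.4]{GW} for $\nu$ real in $(0,1)$, extending it in two directions: to complex $\nu$ with $\Re\nu>0$, and to the regime $\Re\nu\geq 1$. The starting structural fact is that in a collar neighborhood $\ei\times U$ with $U\Subset\pX$, the operator $P(\nu)$ equals its Bessel normal operator $N_\nu=-\partial_x^2+(\nu^2-\tfrac14)x^{-2}$ modulo the remainder $c(x)(x\partial_x+\tfrac{d-1}{2})+x^2L+\square_{h(0)}$, which acts as a b-operator plus an $O(x^2)$ tangential correction. The indicial equation at $x=0$ is $s(s-1)=\nu^2-\tfrac14$ with roots $s_\pm=\tfrac12\pm\nu$, and these govern the leading asymptotics.

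First, for $0<\Re\nu<1$ and $\nu\neq\tfrac12$, I would Mellin-transform $P(\nu)u=f$ in $x$, with $u\in H^{1,m}_{\nu,\b,\loc}$ and $f\in H^{0,m}_{\nu,\b,\loc}$. The Mellin transform $\tilde u(s,y)$ is initially holomorphic in the strip dictated by the twisted $H^1_\nu$ weight, and the indicial identity turns the equation into a first-order pole structure at $s_\pm$. Shifting the contour past $s_-$ and $s_+$ up to $\Re s=\tfrac52$ picks up exactly the two terms $x^{1/2\mp\nu}u_\mp$ as residues, identifies them with $\gamma_-u$ and $(2\nu)^{-1}\gamma_+u$ respectively, and gives $u_-\in H^{m+\nu}_\loc(\pX)$, $u_+\in H^{m-\nu}_\loc(\pX)$ by the standard Mellin-side continuity. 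The shifted contour integral is the remainder, which lies in $x^2 H^{m+2}_{\b}(\ei;H^{m-3}_\loc(\pX))$; the three-order loss accounts for the two residue extractions plus the passage from b-regularity in $(x,y)$ to tangential Sobolev regularity on $\pX$.

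Second, for $\Re\nu\geq 1$ and $\nu\notin\tfrac12\nn$, the same Mellin argument applies, but $x^{1/2-\nu}\notin L^2_\loc$ forces any $u\in H^1_{\nu,\loc}$ to have vanishing residue at $s_-$, hence $u_-\equiv 0$. When additionally $\Re\nu\geq\tfrac32$ the nominal $x^{s_+}$ term is dominated in size by the $x^2$ remainder, so $u_+$ can no longer be read off from the leading-order expansion; instead, I would define $\gamma_+$ directly as the weighted trace $(x^{-1/2-\nu}u)|_{\pX}$ on $x^{1/2+\nu}\cf(X)$, verify the bound into $H^{m-\nu}_\loc(\pX)$ using the Mellin residue representation, and extend by density.

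The main obstacle will be to justify the contour shift uniformly in $\Im\nu$ and for finite $m$: one has to check that the tangential operator $\square_{h(0)}$ and the $O(x^2)$ correction $x^2L$ generate no additional poles in the strip $\Re s\in\open{-\tfrac12,\tfrac52}$, and that $\Gamma$-type estimates control the shifted contour integrand. The exclusion $\nu\notin\tfrac12\nn$ enters precisely here: it ensures that the indicial roots $s_\pm$ remain separated and do not collide with their integer translates $s_\pm+2k$ produced by iterating the $x^2L$ correction, which would otherwise introduce logarithmic resonance terms into the asymptotic expansion.
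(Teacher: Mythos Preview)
Your proposal is correct and follows precisely the route the paper indicates: the proposition is stated without proof and referred to as ``a straightforward extension of \cite[\S4.4]{GW} beyond the range $0<\nu<1$'', and your Mellin-transform/contour-shift argument with indicial roots $s_\pm=\tfrac12\pm\nu$ is exactly that extension. Your handling of the two regimes (picking up both residues for $0<\Re\nu<1$, and observing that $x^{1/2-\nu}\notin L^2_{\loc}$ forces $u_-=0$ for $\Re\nu\geq 1$) matches the paper's discussion of $\gamma_-$ and $\gamma_+$ preceding the proposition.
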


\subsection{Dirichlet problem and scattering matrix}

The Dirichlet and Neumann problems for $P\pnu$ are conveniently defined through sesquilinear forms, see e.g.~\cite[\S5.1]{GW}. 
We focus on the \emph{Dirichlet Klein--Gordon operator}, which maps 
$$
P_{\dir}\pnu : \dot{H}^{1,m}_{\nu,\b,\loc}(X) \rightarrow H^{-1,m}_{\nu,\b,\loc}(X)
$$
 for each $m \in \rr$. 
 
 Well-posedness of the forward Dirichlet problem was shown  by Vasy \cite{vasy2012wave}; cf.~Hol\-ze\-gel \cite{holzegel2012well} for a related study of the Cauchy problem. The Robin problem  was studied by Warnick \cite{warnick2013massive} in the case of metrics that are even modulo $O(x^3)$, and a stronger version was obtained by Gannot--Wrochna in \cite{GW}; see also Enciso--Kamran \cite{enciso2015singular} for regularity in higher-order twisted Sobolev spaces and for the non-linear case, and Dappiaggi--Marta \cite{Dappiaggi2021,Dappiaggi2022} for more general boundary conditions. Here we use the formalism from \cite[\S7.1]{GW} to also derive result for complex $\nu$  in the case of product metrics and Dirichlet boundary conditions.

\begin{theorem}\label{thm:Pinv} Let either $\nu>0$, or $\Re \nu>0$ in the case $\hat g$ is a product metric (that is, $h(x)=h(0)$ is independent of~$x$). Assume that Hypothesis \ref{hyp:global} holds true. Then there exists a unique forward Dirichlet propagator $P_{{\dir},+}^{-1}\pnu$, i.e.~a continuous operator
\beq\label{pp0}
P_{\dir,+}^{-1} \pnu: H^{-1,m+1}_{\nu,\b,\c}(X)\to \dot{H}^{1,m}_{\nu,\b,\loc}(X)
\eeq
such that for all $v\in H^{-1,m+1}_{\nu,\b,\c}(X)$, $u=P_{\dir,+}^{-1}\pnu v$ solves the forward Dirichlet problem:
$$
 \gamma_- u=0, \quad P \pnu u=v,    \quad  \supp u \subset \{ t \geq t_0\} \mbox{ for some } t_0\in\rr.  
$$
\end{theorem}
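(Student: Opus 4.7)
The case of real $\nu>0$ is essentially covered by the well-posedness theory of Vasy \cite{vasy2012wave} and the refinements by Gannot--Wrochna \cite{GW}, whose twisted b-Sobolev framework matches the spaces appearing in the statement; only straightforward translations are needed to account for the rescaling conventions we have adopted in \eqref{eq:P} and for the inclusion of arbitrary b-order $m$. The substance of the theorem therefore lies in the complex-mass case with product metric, and this is where I would focus.

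For $\nu$ with $\Re\nu>0$ in the product case, the central difficulty is that the singular term $\big(\nu^2-\tfrac{1}{4}\big)x^{-2}$ has a nonzero imaginary part, so that pairing $P(\nu)u$ with $\bar u$ no longer yields a sesquilinear form with a positive-definite leading part, as detailed in \cite[\S3]{vasy2012wave}. Following the outline of \sec{intro2}, the strategy is to replace the test function $\bar u$ by $\mathcal{K}\bar u$, where $\mathcal{K}$ is a composition of Hankel transforms of orders associated with $\nu$ and $\bar\nu$ acting in the normal variable on a product collar $\clopen{0,\varepsilon}\times\pX$. The decisive algebraic property is that, at the level of the normal operator, $\mathcal{K}$ intertwines solutions of $P(\bar\nu)u=0$ with solutions of $P(\nu)u=0$, while commuting with $\square_{h(0)}$ since the latter acts tangentially and $\mathcal{K}$ acts purely in $x$ (this is where the product hypothesis is decisive, as $c(x)\equiv 0$ and there are no $x$-dependent transverse coefficients). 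Consequently, the sesquilinear form $\bra P(\nu)u,\mathcal{K}\bar u\ket$ has a leading part which is congruent, modulo commutator remainders, to the classical stress--energy form for the real-mass problem with parameter $|\nu|$, and so has the positivity required for an energy identity.

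From this identity one derives, by the usual Grönwall argument on Cauchy slices of $\hat g$, an a priori forward bound in the twisted Sobolev norm on approximate solutions; the existence of $P_{\dir,+}^{-1}(\nu)$ as a map into $\dot H^{1,m}_{\nu,\b,\loc}(X)$ then follows by the Hahn--Banach/duality step carried out in \cite[\S5, \S7.1]{GW}, and higher b-regularity for $m\geq 0$ is obtained by commuting with elements of $\Psi_{\b}^m(X)$ and iterating; the case $m<0$ is handled by duality against the analogous backward problem for $P(\bar\nu)$, where the same Hankel-conjugation trick applies. The main technical obstacle I anticipate is the control of the remainder terms $[\mathcal{K},\,\cdot\,]$ produced when $\mathcal{K}$ is commuted through cutoffs in the time function $t$ and through the b-pseudodifferential operators used to propagate regularity. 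Since $\mathcal{K}$ is non-local in $x$ it is not itself a b-pseudodifferential operator on $X$, so these commutators must be analysed by hand using the Hankel multiplier asymptotics developed in Section~\ref{sec:hankel}, in particular Proposition~\ref{prop:key}; the goal would be to show that, despite the non-locality of $\mathcal{K}$, each such commutator strictly gains order in the twisted b-calculus and can therefore be absorbed either into lower-order terms or into the positive leading part of the energy identity.
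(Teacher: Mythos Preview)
Your overall strategy is the right one and matches the paper: in the product case one modifies the energy form by inserting the Hankel intertwiner $\cI_\nu:=\cH_{\bar\nu}^{-1}\cH_\nu$ so that the leading quadratic form becomes positive definite via the identities $\bar Q_0\cI_\nu=\cI_{\nu-1}Q_0$ and $Q_j\cI_\nu=\cI_\nu Q_j$ for $j\geq 1$, and then one runs the standard stress--energy argument with a timelike multiplier $V=\phi W$, followed by the abstract functional-analytic and b-propagation machinery of \cite{vasy2012wave,GW}.

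The gap is in your plan for controlling the commutator remainders. Proposition~\ref{prop:key} is about boundary traces $\gamma_+ B\gamma_+^*$ of Hankel multipliers with values in paired Lagrangian classes, and it gives no leverage on $[\cI_\nu,f]$ for a multiplication operator $f$ or on $[\cI_\nu,A]$ for $A\in\Psi_\b$. The paper's route is different and much more direct: one observes (via \cite{Derezinski2017,Bruneau2011}) that $\cI_\nu=g_\nu(D)$ is a bounded function of the generator of dilations $D=i^{-1}(x\p_x+\tfrac12)$, with $g_\nu\in S^0(\rr)$ given explicitly in terms of gamma functions. A standard commutator expansion for functions of a self-adjoint operator then yields $\|[\cI_\nu,f]\|_{B(L^2)}\leq C\|xf'\|_{L^\infty}$ and similarly for $[\cI_\nu,f]x\p_x$. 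The crucial point is therefore not that these commutators gain an order in the twisted b-calculus (they do not), but that their \emph{operator norm} is small on a sufficiently thin collar, so they can be absorbed into the positive stress--energy term after shrinking the support in $x$. The same mechanism handles the commutators with the b-pseudodifferential commutants used for higher regularity. Without the representation $\cI_\nu=g_\nu(D)$, I do not see how your proposed analysis via Section~\ref{sec:hankel} would close the estimate.
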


The proof in the case  $\nu\notin \rr$ uses Hankel transforms introduced in the next chapter and is postponed to Appendix \ref{sec:app}. For the real case, see~\cite[Thm.~4.16]{vasy2012wave}.

\medskip

As a consequence of Theorem \ref{thm:Pinv} one  gets   as in  \cite[Thm.~8.11]{vasy2012wave}   the well-posedness of the forward problem with prescribed Dirichlet data.  Namely, given  $v\in \dot{H}^{-1,m+1}_{\nu,\b,\c}(X)$ and $f\in H^{\nu+m}_{\rm c}(\pX)$  there exists a unique solution $u\in {H}^{1,m}_{\nu,\b,\loc}(X)$ of the forward problem
\beq\label{eq:forward}
\gamma_-u=f, \quad P\pnu u=v,    \quad  \supp u \subset \{ t \geq t_0\} \mbox{ for some } t_0\in\rr.  
\eeq

The Dirichlet backward propagator $P_{{\dir},-}^{-1}\pnu$ is defined similarly with support propagating to the past.  By uniqueness, the formal adjoint of $P_{{\dir},\pm}^{-1}(\nu)$ equals $P_{{\dir},\mp}^{-1}(\bar \nu)$. 

In what follows, for various spaces of distributions such as ${H}^{1,s}_{\nu,\b}(X)$ we denote by ${H}^{1,s}_{\nu,\b,\pm}(X)$ the subspace of future, resp.~past supported elements, equipped with the Fréchet topology given by appropriately localized seminorms. This type of spaces are particularly useful when composing forward inverses; in fact, the proof of Theorem \ref{thm:Pinv} gives the stronger statement
$$
P_{\dir,\pm}^{-1} \pnu: H^{-1,m+1}_{\nu,\b,\pm}(X)\to \dot{H}^{1,m}_{\nu,\b,\pm}(X).
$$

By repeating the arguments of Graham--Zworski \cite[Prop.~3.5]{grahamzworski} we obtain the existence of a holomorphic family of \emph{forward Poisson operators} characterized as follows.

\begin{proposition}\label{prop:gz1} There exists a unique real-analytic family of operators
$$
\cP_+(\nu): C^\infty_+(\p X)\to C^\infty_+(X^\circ), \quad \nu>0,
$$
such that $P(\nu)\cP_+(\nu)=0$ and 
\begin{align*}
\cP_+(\nu) f =\begin{cases} x^{\nu-\12} F + x^{\nu+\12} G &  \mbox{ if } \nu\notin \12\nn, \\ 
x^{-\frac{l}{2}} F +  x^{\frac{l}{2}}\log x  \,G &    \mbox{ if }  \nu= \frac{l}{2}, \ l \in \nn, 
\end{cases}
\end{align*}
where $F,G\in C^\infty_+(X)$ is such that $F|_{\pX}= f$.
\end{proposition}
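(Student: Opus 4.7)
The plan is to adapt the parametrix-plus-correction argument of Graham--Zworski \cite[Prop.~3.5]{grahamzworski} to the Lorentzian setting, with the forward Dirichlet propagator $P_{\dir,+}^{-1}(\nu)$ of Theorem~\ref{thm:Pinv} replacing their resolvent. Given $f\in C^\infty_+(\p X)$, I would first build a future-supported approximate solution $u_0$ of $P(\nu)u=0$ whose generalized Dirichlet data is $f$, and then correct the residual $r\defeq P(\nu)u_0\in\dot C^\infty_+(X)$ by applying $P_{\dir,+}^{-1}(\nu)$ to obtain an exact solution.

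The approximate solution would be built by the Frobenius method at the boundary. The leading part of $P(\nu)$ as $x\to 0$ is the Bessel operator $N_\nu=-\p_x^2+(\nu^2-\tfrac14)x^{-2}$, whose indicial roots are $s_\pm=\tfrac12\pm\nu$. Seeking $u_0$ of the form $x^{s_-}F$ with $F=\sum_{j\geq 0}x^jF_j$ and $F_0=f$, the expansion of \eqref{eq:P} produces, at order $x^{s_-+j-2}$, a recursion
\[
 j(2\nu-j)\,F_j = R_j\bigl(F_0,\ldots,F_{j-1};\{\p_x^k h|_{\pX}\}_{k\leq 2j}\bigr),
\]
solvable in $C^\infty_+(\p X)$ whenever $j\neq 2\nu$. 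For $\nu\notin\tfrac12\nn$ the $F_j$ are determined inductively from $f$ and the Taylor jet of $h$; for $\nu=l/2$ the obstruction at $j=l$ is absorbed by inserting a log term $x^{s_+}\log x\cdot G_0$, thanks to the identity $N_\nu(x^{s_+}\log x\,G_0)=-2\nu\,x^{s_+-2}G_0$. Borel summation, which preserves future support because every finite truncation is a local construction, realizes the formal series as an honest $u_0\in C^\infty_+(X^\circ)$ of the stated leading form, with $r\in\dot C^\infty_+(X)$.

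Next, I would set $\cP_+(\nu)f\defeq u_0-P_{\dir,+}^{-1}(\nu)r$. The mapping property \eqref{pp0} applied to $r\in\dot C^\infty_+(X)\subset H^{-1,\infty}_{\nu,\b,+}(X)$ places the correction in $\dot H^{1,\infty}_{\nu,\b,+}(X)$, hence $\gamma_-$ annihilates it and $\gamma_-(\cP_+(\nu)f)=\gamma_- u_0=f$, while $P(\nu)\cP_+(\nu)f=0$ by construction. Iterating Proposition~\ref{lem:graphnormexpansion} on the correction yields a polyhomogeneous expansion at $\p X$ starting at order $x^{s_+}$ with smooth coefficients, which is absorbed into the $G$-term in the advertised formula. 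Uniqueness is immediate from Theorem~\ref{thm:Pinv}: two candidates would differ by a future-supported solution of $P(\nu)u=0$ with vanishing Dirichlet trace, which must vanish.

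Real-analyticity of $\nu\mapsto\cP_+(\nu)$ on $\nu>0$ follows because each ingredient varies analytically in $\nu$: the recursion coefficients are rational in $\nu$, Borel summation can be performed analytically in the parameter, and Theorem~\ref{thm:Pinv} yields an analytic family of forward propagators. The hard part will be the behavior at the exceptional values $\nu\in\tfrac12\nn$, where the formal recursion develops poles in $\nu$ that must cancel against the simultaneous singularity of $P_{\dir,+}^{-1}(\nu)r$ so as to extend $\cP_+$ analytically with the log-modified leading asymptotics; this cancellation produces the dichotomous formula in the statement and is handled as in \cite{grahamzworski}.
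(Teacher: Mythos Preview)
Your proposal is correct and follows essentially the same route as the paper: the paper's proof simply says that the arguments of \cite[\S3]{grahamzworski} carry over verbatim once the resolvent is replaced by the forward Dirichlet propagator $P_{\dir,+}^{-1}(\nu)$, and your Frobenius-plus-correction sketch is precisely an expansion of that argument. The only point the paper makes explicit that you assert without justification is \emph{why} $\nu\mapsto P_{\dir,+}^{-1}(\nu)$ is real-analytic on $\nu>0$: the paper observes that this follows from $P_{\dir,+}^{-1}(\nu)$ being a left inverse of the manifestly analytic family $P_\dir(\nu)$, rather than from the statement of Theorem~\ref{thm:Pinv} itself.
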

\begin{proof} The arguments in \cite[\S3]{grahamzworski}  apply almost verbatim in the Lorentzian case with the difference that the resolvent is replaced by $P_{\dir,+}^{-1} \pnu$. The extra input that we real analyticity i of $\nu \mapsto P_{\dir,+}^{-1}$ for $\nu>0$, with values in, say, bounded maps $L^2_+(X)\to L^2_+(X)$; this is however a straightforward consequence of $P_{\dir,+}^{-1}\pnu: L^2_+(X)\to L^2_+(X)$ being the left inverse of $P_{\dir}(\nu): \dot H_+^{0}(X)\to L^2_+(X)$.  
\end{proof}

\begin{definition} For $\Re \nu >0$, $\nu\notin \12\nn$, the (forward) \emph{scattering matrix}, or \emph{Dirichlet-to-Neumann map}, is by definition the map
\beq
\DN_{g}(\nu)  : C_{+}^\infty(\pX)  \ni f \mapsto (2\nu)^{-1}\gamma_+ u\in C^\infty_+(\p X),
\eeq
where given $f$, $u$ is the unique solution of the forward problem \eqref{eq:forward} with $v=0$. 
\end{definition}

\begin{proposition}\label{cor:DN} For $\Re \nu >0$, $\nu\notin \12\nn$, 
if well-defined, the forward scattering matrix $\DN_{g}(\nu)  : \gamma_- u \mapsto \gamma_+ u$ equals
 \beq
 \DN_{g}(\nu) =  (2\nu)^{-1} \gamma_+ P_{\dir,+}^{-1}\pnu\gamma_+^*,
 \eeq
 where $P_{\dir,+}^{-1} \pnu\gamma_+^*$ is defined as the adjoint of $\gamma_+ P_{\dir,-}^{-1}(\bar\nu)$.
\end{proposition}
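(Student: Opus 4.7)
The plan is to show that the forward Poisson solution $u=\cP_+(\nu)f$ underlying the definition of $\DN_g(\nu)$ coincides, as a distribution on~$X$, with $P_{\dir,+}^{-1}\pnu\gamma_+^*f$; the identity will then follow by applying $\gamma_+$ and dividing by~$2\nu$. My approach is a Green-type pairing between a forward Poisson solution of $P\pnu u=0$ and a backward inverse $v=P_{\dir,-}^{-1}(\bar\nu)\phi$, combined with the duality $(P_{\dir,-}^{-1}(\bar\nu))^*=P_{\dir,+}^{-1}\pnu$ noted after Theorem~\ref{thm:Pinv}.

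Concretely, I would fix $f\in C_{+}^\infty(\pX)$ and take $u=\cP_+(\nu)f\in\cX^\infty_\nu$ (with $Pu=0$, $\gamma_-u=f$, $\gamma_+u=2\nu\DN_g(\nu)f$), together with a past-supported test function $\phi\in\dot\cC^\infty_\c(X)$ and $v=P_{\dir,-}^{-1}(\bar\nu)\phi\in\dot H^{1,\infty}_{\bar\nu,\b,-}(X)$, so that $Pv=\phi$, $\gamma_-v=0$, and $\supp u\cap\supp v$ is compact in~$X$. Using the expression~\eqref{eq:P} of $P$ as $-\p_x^2+(\nu^2-\tfrac14)x^{-2}$ plus first-order and tangential terms that are formally self-adjoint in $L^2(X)$, I would perform integration by parts on $\{x\geq\varepsilon\}$ and take $\varepsilon\to 0^+$ to obtain the Green identity
\beq\label{eq:green-plan}
(Pu,v)_{L^2(X)}-(u,Pv)_{L^2(X)}=\int_{\pX}\bigl(\gamma_+u\cdot\overline{\gamma_-v}-\gamma_-u\cdot\overline{\gamma_+v}\bigr)\dvol_{h(0)}.
\eeq
The decisive calculation will be the limit of the boundary Wronskian $[\p_xu\cdot\bar v-u\cdot\p_x\bar v]_{x=\varepsilon}$: after inserting the expansions of Proposition~\ref{lem:graphnormexpansion} for $u$ (parameter~$\nu$) and for $\bar v$ (complex conjugate of the expansion of $v$ with parameter~$\bar\nu$), the singular powers $x^{\pm 2\Re\nu}$ cancel and the coefficients $\tfrac12\pm\nu$ combine, given the conventions $\gamma_+u=2\nu u_+$ and $\gamma_+v=2\bar\nu v_+$, to yield exactly the integrand on the right-hand side of~\eqref{eq:green-plan}. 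The hypothesis $\nu\notin\tfrac12\nn$ rules out logarithmic terms and keeps this matching clean.

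Inserting $Pu=0$ and $\gamma_-v=0$ into~\eqref{eq:green-plan} reduces it to $\langle u,\phi\rangle_X=\langle f,\gamma_+P_{\dir,-}^{-1}(\bar\nu)\phi\rangle_{\pX}$, which by the definition of $P_{\dir,+}^{-1}\pnu\gamma_+^*$ as $(\gamma_+P_{\dir,-}^{-1}(\bar\nu))^*$ equals $\langle P_{\dir,+}^{-1}\pnu\gamma_+^*f,\phi\rangle_X$. As $\phi$ ranges over a dense set of test functions, this identifies $u=P_{\dir,+}^{-1}\pnu\gamma_+^*f$ as distributions on~$X$; applying $\gamma_+$ (well-defined on $u\in\cX^\infty_\nu$) and dividing by~$2\nu$ then gives the stated formula. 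The hard part will be the rigorous derivation of~\eqref{eq:green-plan}: I would have to control the boundary Wronskian uniformly in~$\varepsilon$ using the remainder $x^{2}H^{m+2}_{\b}(\ei;H^{m-3}_\loc(\pX))$ from Proposition~\ref{lem:graphnormexpansion} and verify that only the Dirichlet--Neumann pairing survives in the limit.
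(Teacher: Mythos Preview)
Your proposal is correct and follows essentially the same idea as the paper: a Green-type pairing between the forward Poisson solution and a backward Dirichlet solution, combined with the duality $(P_{\dir,-}^{-1}(\bar\nu))^*=P_{\dir,+}^{-1}(\nu)$. Two minor organizational differences are worth noting. First, the paper does not re-derive the Green identity~\eqref{eq:green-plan} but simply invokes it from \cite[\S4.5]{GW}; your plan to justify it via the boundary Wronskian and Proposition~\ref{lem:graphnormexpansion} is sound but duplicates work already in the literature. Second, the paper pairs $u_1=P_{\dir,-}^{-1}(\bar\nu)\gamma_+^*f$ against $u_2=\cP_+(\nu)f_2$ to compute $\Lambda_g(\nu)^*$ directly, whereas you pair $\cP_+(\nu)f$ against $P_{\dir,-}^{-1}(\bar\nu)\phi$ for arbitrary $\phi\in\dot\cC^\infty_\c(X)$ to first establish the Poisson identity $\cP_+(\nu)f=P_{\dir,+}^{-1}(\nu)\gamma_+^*f$---which the paper instead records as the corollary~\eqref{eq:defu} \emph{after} the proposition. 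Your route is slightly more direct in that it identifies the full Poisson solution rather than just its Neumann trace.
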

\begin{proof} By \eqref{eq:gpm} and mapping properties of  $P_{\dir,-}^{-1}$, 
$$
\gamma_+ P_{\dir,-}^{-1}(\bar \nu) :  H^{0,-m}_{\nu,\b,-}(X) \to H_-^{-m-1-\nu}(\pX) 
$$
is well-defined and continuous. Thus the formal adjoint maps continuously
$$
P_{\dir,+}^{-1}(\nu) \gamma_+^*:  H_+^{\nu+m+1}(\pX)   \to \dot H^{0,m}_{\nu,\b,+}(X).
$$
For all $u_1\in \cX^m$  and $u_2\in H_{\nu,\rm c}^{-m}(X)$ by \cite[\S4.5]{GW} we have the Green's formula
\beq\label{eq:green}
(P(\bar{\nu}) u_1 | u_2)_{L^2(M)}- (u_1|P(\nu) u_2)_{L^2(M)} = (\gamma_+ u_1 | \gamma_- u_2)_{L^2(\pX)}-  (\gamma_- u_1 | \gamma_+ u_2)_{L^2(\pX)},
\eeq
where $(\cdot|\cdot)_{L^2(\p X)}$ is the pairing on the boundary defined using the volume density of $h(0)$. We note that the condition of compact support can be dropped if $u_1$ is past supported and $u_2$ is future supported.  By applying this to $u_1=P_{\dir,-}^{-1}(\bar\nu) \gamma_+^*f$,  and $u_2=\cP(\nu)f_2$, since $\gamma_-u_1=0$ and $\gamma_- u_2 = f_2$ we obtain   
$$
( \gamma_+^*f | u_2)_{L^2(M)}=  (\gamma_+ P_{\dir,-}^{-1}(\bar\nu)\gamma_+^*f | \gamma_- u_2)_{L^2(\pX)},
$$
hence 
$$
( f |  \Lambda_g(\nu)\gamma_ - u_2)_{L^2(M)}=  (2\nu)^{-1}(\gamma_+ P_{\dir,-}^{-1}(\bar \nu) \gamma_+^*f | \gamma_- u_2)_{L^2(\pX)}.
$$
Since $f$ and  $\gamma_- u_2 = f_2$ was arbitrary we conclude that $\Lambda_g(\nu)^*= (2\bar \nu)^{-1}\gamma_+ P_{\dir,-}^{-1}(\bar \nu) \gamma_+^*$   hence $\Lambda_g(\nu)=(2\nu)^{-1}\gamma_+ P_{\dir,+}^{-1}(\nu)\gamma_+^*$ as claimed. 
\end{proof}


\medskip

As a corollary we obtain that 
 \beq\label{eq:defu}
\cP_+(\nu) f= P_{\dir,+}^{-1}(\nu)\gamma_+^*   f, \quad  f\in C_{\rm c}^\infty(\p X),
\eeq
where $P_{\dir,+}^{-1}(\nu)\gamma_+^* $ is defined by duality as before.
In fact, the right hand side  defines a forward solution $u$ with Neumann data $\gamma_+ u= \gamma_+ P_{\dir,+}^{-1}(\nu)\gamma_+^*   f=\Lambda_g(\nu) f$, hence with Dirichlet data $\gamma_- u=f$.

\medskip

From now on we  simply write  $P^{-1}$ instead of $P_{\dir,+}^{-1}(\nu)$. Note that we consider  forward problems for the sake of definiteness,  but one could equally well consider backward problems.

\subsection{Dirichlet-to-Neumann map in product case} \label{ss:sketch} \label{ss:productcase} For the sake of illustration let us explain how the Dirichlet-to-Neumann map for $\Re \nu >0$, $\nu\notin \frac{1}{2}\nn$,  can be formally obtained in the \emph{product case}, by which we mean that $h(x)=h(0)$ is $x$-independent (in that case the assumption $\nu\notin \nn$  suffices; this  boils down to the evenness of $h(x)$). In this special case the operator $P=P(\nu)$, denoted henceforth by $P_0$ to distinguish it from $P$ in the general case, is given by
$$
P_0=N_\nu+a =   - \p_x^2 + \big(\nu^2-{\textstyle\frac{1}{4}}\big){x^{-2}}+a,
$$
where $a=\square_h$.  As our discussion is for the moment purely heuristic, we treat $a$ as a real number and then the problem amounts to ODE analysis.

Let $J_\nu(x)$ be the Bessel function of order $\nu$. Let us introduce the Hankel and inverse  Hankel transforms:
$$
\big(\mathcal{H}_\nu f\big)(\xi):=  \int_0^\infty  (\xi x)^{\12} J_\nu(\xi x) f(x)  dx, \quad
\big(\mathcal{H}_\nu^{-1} g\big)(x):=\int_0^\infty  (x \xi)^{\12} J_\nu(x\xi) g(\xi)   d\xi.
$$
Then ${N}_\nu$ is diagonalized by the Hankel transform in the sense that 
\beq
{N}_\nu =  \mathcal{H}_\nu^{-1} \circ  \xi^2 \circ  \mathcal{H}_\nu,
\eeq
where $\xi^2$ is understood as a multiplication operator. Recall that the Bessel functions have asymptotics at zero:
\beq\label{eq3}
J_\nu(x)= \frac{2^{-\nu}}{\Gamma(1+\nu)} x^{\nu} ( 1+O(x^2)).
\eeq
Formally at least, that the Dirichlet inverse of $P_0$ is $
 P^{-1}_{0} =   \mathcal{H}_\nu^{-1} (\xi^2+a)^{-1}  \mathcal{H}_\nu $,
and its Schwartz kernel is
\beq\label{eq2}
 P^{-1}_{0}(x,x')= \int_0^\infty  x^{\12} J_\nu(x\xi )   (\xi^2+a)^{-1}  J_\nu(\xi x') (x')^{\12}   \xi d\xi.
\eeq
Let us also recall that the Dirichlet and Neumann maps are given respectively by
$$
 \gamma_- u = \lim_{x\to 0^+}(x^{\nu-\12}u(x)), \quad  
 \gamma_+ u =  2 \nu \fp_{x\to 0^+}(x^{-\nu-\12}u)(x),
$$
and in the special case of vanishing Dirichlet data $\gamma_- u =0$  we simply have $\gamma_+ u=2 \nu (x^{-\nu-\12}u)(x)|_{x=0}$. We can now formally compute the Dirichlet-to-Neumann map using \eqref{eq2} and the asymptotics \eqref{eq3} at zero (we will later show that this is the only regime that matters and that the finite part makes sense):
\beq\label{eq:key}
\bea
\DN_{g}(\nu)  &=  (2\nu)^{-1}\gamma_+  P_{0}^{-1}  \gamma_+^* = \fp_{x,x'\to 0} \big(  x^{-2\nu}  P_0^{-1}(x,x') (x')^{-2\nu}\big)|_{x=x'=0} \\
& =  \frac{2^{-2\nu+1}\nu}{\big(\Gamma(1+\nu)\big)^2} \fint_0^\infty  \xi^{2\nu+1}(\xi^2 +a)^{-1} d\xi  \\
 &  = \frac{2^{-2\nu} }{\Gamma(\nu)\Gamma(1+\nu)} \fint_0^\infty  \lambda^{\nu}(\lambda + a)^{-1}  d \lambda  = \cons   a^{\nu},
\eea 
\eeq
at least away from the poles of $\Gamma(1-\nu)$. The Mellin transform $\fint_0^\infty  \xi^{2\nu+1}(\xi^2 + a)^{-1} d\xi$ can be defined as the analytic continuation of $\int_0^\infty  \xi^{2\nu+1}(\xi^2 + a)^{-1} d\xi$ from $\Re \nu <0$ or as a Hadamard finite part integral, see \sec{ss:fp}. Therefore, if we substitute $a$ by $\square_{h}=\square_{h(0)}$ we obtain formally
$$
\DN_{g}(\nu) = \cons (\square_{h(0)})^{\nu},
$$
in agreement  with the result proved by Enciso--González--Vergara in the static product case \cite{Enciso2017}. In the next sections we consider rigorously the general $x$-dependent case using a strategy inspired by the above formal computations.

\section{Hankel multipliers and paired Lagrangian distributions} \label{sec:hankel}

\subsection{Hankel transform on the half-line}\label{ss:hankel} Let $J_\nu(x)$ be the Bessel function of order $\Re\nu > -1$ (our focus is on the case $\Re \nu >0$, but the broader range  $\Re\nu > -1$  is sometimes helpful for analytic continuation arguments and formulas involving $J_{\nu-1}(x)$).  The function $\cJ_\nu(x):=x^\12  J_\nu(x)$ on $\rr_+$ satisfies the following bounds:
\beq\label{eq:bounds}
\bea
\module{\cJ_\nu(x)}&\leq C x^{\Re\nu+\12} \mbox{ for } 0\leq x \leq 1,\\
\module{\cJ_\nu(x)}&\leq C_\nu   \mbox{ for }  x\geq 1,
\eea
\eeq
where the first one is uniform in $\nu$ but not the second, see e.g.~\cite{Derezinski2017} for more precise asymptotics. Furthermore, $\overline{\cJ_\nu(x)}=\cJ_{\bar\nu}(x)$.

The  differential operator
$$
\p_x + (\nu-\textstyle\12) x^{-1}=  x^{-\nu+\12} \p_x x^{\nu-\12}   \in \Diff_\nu^{1}({\rr_+})
$$
is often called  {twisted derivative} of order $\nu-\12$. Its relevance comes from the fact  that
$$ 
\bea
N_\nu &=   - \p_x^2 + \big(\nu^2-{\textstyle\frac{1}{4}}\big){x^{-2}} 
=   (-\p_x + (\nu-\textstyle\12) x^{-1})(\p_x + (\nu-\textstyle\12) x^{-1}).  
\eea
$$
From the identities $\p_x( x^{\nu} J_\nu(x))=x^\nu J_{\nu-1}(x)$ and $\p_x( x^{-\nu} J_\nu(x))=-x^{-\nu} J_{\nu+1}(x)$ we get
\beq\label{ederiv}
\bea
&(\p_x + (\nu-\textstyle\12) x^{-1})\big( \cJ_\nu(x\xi)\big) =   \xi \,\cJ_{\nu-1}(x\xi), \\ 
&(-\p_x + (\nu-\textstyle\12) x^{-1})\big( \cJ_{\nu-1}(x\xi)\big) =   \xi \,\cJ_{\nu}(x\xi),
\eea
\eeq
for all $\xi>0$, 
hence $ N_{\nu,x} \big( \cJ_{\nu}(x\xi)\big)  = \xi^2 \cJ_{\nu}(x\xi) $.

For $\Re \nu > -1$, the Hankel and inverse Hankel transforms are given by
$$ 
\big(\cH_\nu f \big) (\xi) = \int_{0}^\infty \cJ_\nu(x\xi)f(\xi)d\xi, \quad \big(\cH_\nu^{-1} g \big) (x)=     \int_{0}^\infty \cJ_\nu(\xi x)g(x)dx,
$$
Then $\cH_\nu$, defined initially on $C_{\rm c}^\infty(\rr_+)$, extends to a bounded operator on  $L^2(\rr_+)$ satisfying $\cH_\nu^{-1}=\cH_\nu=\cH_{\bar\nu}^*$, see  \cite[Prop.~4.5]{Derezinski2017}. 
 Using \eqref{ederiv} one gets 
\beq\label{eq:diag}
\bea
\p_x + (\nu-\textstyle\12)x^{-1}   &= \cH_{\nu-1}^{-1} \circ \xi \circ \cH_\nu,\\
-\p_x + (\nu-\textstyle\12)x^{-1}    &= \cH_{\nu}^{-1} \circ \xi \circ \cH_{\nu-1},
\eea
\eeq
and $N_\nu= \cH_\nu^{-1} \circ \xi^2 \circ \cH_\nu$. 

\subsection{Regularized integrals of symbols}\label{ss:fp} We will come across expressions that are not necessarily integrable near zero or infinity, but which have a regularized integral in the following sense.

\begin{definition} \label{def:sy} Let $f\in L^1_{\rm loc}(\rr_+,\mathcal{F})$ be a locally Bochner integrable function on $\rr_+$ with values in a Fréchet space $\mathcal{F}$. Its \emph{Hadamard finite part integral}, or \emph{regularized integral} 
\beq\label{deffp}
\fint_{0}^\infty f(\xi) d\xi \defeq \fp_{R\to+\infty,\epsilon\to 0^+} \int_{\epsilon}^R f(\xi) d\xi  ,
\eeq
is defined as the constant term in the log-polyhomogeneous expansion of  $R\mapsto  \int_1^R f(\xi)d\xi$ for large $R$ (denoted by $\fint_{1}^\infty f(\xi) d\xi $), plus the   constant term in the log-polyhomogeneous expansion of  $\epsilon\mapsto  \int_0^\epsilon f(\xi)d\xi$ for small $\epsilon>0$ (denoted by $\fint_{0}^1 f(\xi) d\xi $), provided that the two log-polyhomogeneous expansions exist.
\end{definition} 



Of course, if $f\in L^1(\rr_+,\mathcal{F})$ then the regularized integral  \eqref{deffp} coincides with the Bochner integral $\int_{0}^\infty f(\xi) d\xi$. The basic examples are the regularized integrals 
\beq\label{eq:regi}
\fint_1^\infty \xi^\beta d\xi=-\frac{1}{1+\beta}, \quad \fint_0^1 \xi^\beta d\xi=\frac{1}{1+\beta}, \quad \fint_0^\infty \xi^\beta d\xi=0.
\eeq
for $\beta\neq -1$; for $\beta=-1$ all three regularized integrals are $0$. More generally if $f\in S^m_{\rm ph}(\rr_+)+ L^1(\rr_+)$ for some $m\in \rr$ then $R\mapsto  \int_1^R f(\xi)d\xi$ has a log-polyhomogeneous expansion and hence a well-defined regularized integral as follows from the arguments in e.g.~\cite[\S3]{paycha} (where the strictly analogous case of $\rr^d$ instead of $\rr_+$ is considered).  An analogous statement holds true for  functions that are poly-homogeneous near $0$ modulo $L^1(\rr_+)$. 

In the sequel we will need variants of the following computation:

\begin{example}\label{ex:had} For  $\module{\arg a}<\pi$ and $\nu\in \cc$, $\xi^{2\nu+1}(\xi^2+a)^{\frac{m}{2}}\in  S^{2\nu+m+1}_\ph(\rr_+)$, and if $1+\nu\notin -\nn_0$ and $-1-\nu-\frac{m}{2}\notin-\nn_0$   then
$$
\fint_0^\infty \xi^{2\nu+1}(\xi^2+a)^{\frac{m}{2}} d\xi =\12 a^{1+\nu+{\frac{m}{2}}}\, \frac{\Gamma(1+\nu) \Gamma(-1-\nu-\frac{m}{2})}{\Gamma(-\frac{m}{2})}.
$$
In particular for $m=-2$ and $\nu\notin\zz$,
$$
\fint_0^\infty \xi^{2\nu+1}(\xi^2+a)^{-1} d\xi = \12 a^\nu\, \Gamma(1+\nu) \Gamma(-\nu).
$$
This can be shown for instance by expressing the anti-derivative of $\xi^{2\nu+1}(\xi^2+a)^{-1} $ in terms of the Gauss ${}_{2} F_1$ hypergeometric function and then using well-known   asymptotics of the ${}_{2} F_1$ function at infinity. 
\end{example}

\subsection{Symbol spaces} Motivated by the previous chapter we introduce classes of Hankel multipliers with values in operators or Schwartz kernel of operators (more precisely, paired Lagrangian distributions). We start by introducing the relevant symbol spaces.

We first recall the symbol space used to define paired Lagrangian distributions.
For $q,d,k\in\nn$, $p,l\in\rr$, the space of product-type symbols ${S}^{p,l}(\rr^{q}; \rr^d,\rr^k)$ consists by definition of all smooth functions $c$ on $\rr^{q}\times \rr^d\times \rr^k$  for all $K\subset \rr^{q}$ compact and  all multi-indices $\alpha,\beta,\gamma$,
 \beq
 \big|\p^\alpha_\eta\p^{\beta}_\sigma \p_z^\gamma c(z;\eta,\sigma)\big|\leq C_{\alpha \beta\gamma} \bra \eta\ket^{p-\module{\alpha}}\bra \sigma\ket^{l-\module{\beta}}.
 \eeq
 The best constants $C_{\alpha \beta\gamma}$ define a family of seminorms which allows  to equip the space ${S}^{p,l}(\rr^{q}; \rr^d,\rr^k)$ with a Fréchet space topology. 

Next, we  consider symbols with values in ${S}^{p,l}(\rr^{q}; \rr^d,\rr^k)$ which have the following very particular behavior.

\begin{definition}\label{def:Sev} For $m\in \rr$, we define $S^{m}S^{p,l}( \rr^q;\rr_+,\rr^d,\rr^k)$  to be the space of all smooth functions 
$b$ on $\ei\times \rr^{q}\times \rr_+\times \rr^d\times \rr^k$  for all $K\subset \ei\times \rr^{q}$ compact, all $i,j\in\nn_0$ and all multi-indices $\alpha,\beta,\gamma$,
 \beq\label{def:bxz}
 \big|\p_\xi^i\p^\alpha_\eta\p^{\beta}_\sigma  \p_z^\gamma b(z;\xi,\eta,\sigma)\big|\leq C_{i\alpha \beta\gamma} \big(\bra \eta\ket^{p}\bra \sigma\ket^{l} + \xi^2\big)^{\frac{m-i}{2}} \bra \eta\ket^{-\module{\alpha}}\bra \sigma\ket^{-\module{\beta}}.
 \eeq
 The best constants $C_{i\alpha \beta\gamma}$ give a family of seminorms which define a Fréchet space topology.
\end{definition}

We  can also generalize the definition to $m,p,l\in \cc$ by replacing $m,p,l$ on the r.h.s.~of \eqref{def:bxz} by  $\Re m$, $\Re p$, $\Re l$. Note that for fixed $\xi$, each $b\in S^{m}S^{p,l}( \rr^q;\rr_+,\rr^d,\rr^k)$ is a product type symbol in  ${S}^{\frac{mp}{2},\frac{ml}{2}}(\rr^{q}; \rr^d,\rr^k)$.

We will often abbreviate $S^{m}S^{p,l}( \rr^q;\rr_+,\rr^d,\rr^k)$  by $S^{m}S^{p,l}$. We denote by $$S^{m}_\ph S^{p,l}( \rr^q;\rr_+,\rr^d,\rr^k)$$  or $S^{m}_\ph S^{p,l}$ the subclass of symbols in $S^{m}S^{p,l}$ that have a poly-homogeneous expansion in $\xi$.


\begin{lemma}\label{lem:bsm} Let $b\in S^{m} S^{p,l}( \rr^q;\rr_+,\rr^d,\rr^k)$, $\varphi\in L^\infty(\rr_+)$ and $\nu\in \cc$ such that  $1+\Re\nu\notin -\nn_0$ and $-1-\Re\nu-\frac{m}{2}\notin-\nn_0$. Let either $I=\clopen{1,\infty}$ and $\Re 2\nu+ 1 + m <  -1$, or $I=\clopen{0,1}$ and $\Re 2\nu+ 1  >  -1$. Then
\beq\label{bsm}
\int_I \xi^{2\nu + 1} b(z;\xi,\eta,\sigma)\varphi(\xi)d \xi  \in{S}^{p(1+\nu+\frac{m}{2}),l(1+\nu+\frac{m}{2})}(\rr^{q}; \rr^d,\rr^k).
\eeq
Moreover, if $\varphi\in L^2(\rr_+)$ then each semi-norm in $S^{p(\frac{3}{4}+\nu+\frac{m}{2}),l(\frac{3}{4}+\nu+\frac{m}{2})}(\rr^{q}; \rr^d,\rr^k)$ is bounded by a constant times $\norm{\varphi}_{L^2(\rr_+)}$. 
\end{lemma}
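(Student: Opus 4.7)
The plan is to differentiate under the integral, extract the symbolic gains in $\eta,\sigma$ from the estimate on $b$, and reduce everything to a single scalar integral estimate in $\xi$. Setting $M=\langle\eta\rangle^p\langle\sigma\rangle^l$, the $z$-derivatives commute with the integral, while $\eta,\sigma$-derivatives hit $b$ only, producing by \eqref{def:bxz} with $i=0$ the pointwise bound
$$
\big|\partial_z^\gamma\partial_\eta^\alpha\partial_\sigma^\beta b(z;\xi,\eta,\sigma)\big|\,\leq\, C\,(M+\xi^2)^{m/2}\,\langle\eta\rangle^{-|\alpha|}\,\langle\sigma\rangle^{-|\beta|}.
$$
After pulling out $\|\varphi\|_{L^\infty(\rr_+)}$ and the symbolic factor $\langle\eta\rangle^{-|\alpha|}\langle\sigma\rangle^{-|\beta|}$, the claim \eqref{bsm} reduces to the single scalar estimate
$$
\int_I \xi^{2\Re\nu+1}(M+\xi^2)^{m/2}\,d\xi \,\leq\, C\,M^{1+\Re\nu+m/2},
$$
with integrability at the endpoints of $I$ guaranteed precisely by the convergence hypotheses in the statement.

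I would prove this scalar estimate, which is the main computation, by a scaling--split argument. Focus on the regime $M\geq 1$; the bounded-$M$ regime is absorbed into the constant $C$. For $I=\clopen{1,\infty}$, split at $\xi_0=M^{1/2}$: on $\clopen{1,M^{1/2}}$ one has $(M+\xi^2)^{m/2}\asymp M^{m/2}$, producing a contribution of order $M^{m/2}\cdot M^{1+\Re\nu}=M^{1+\Re\nu+m/2}$, while on $\clopen{M^{1/2},\infty}$ one has $(M+\xi^2)^{m/2}\asymp \xi^m$ and the tail integral $\int_{M^{1/2}}^\infty \xi^{2\Re\nu+1+m}\,d\xi$ is of the same order $M^{1+\Re\nu+m/2}$, using precisely the hypothesis $\Re(2\nu+1+m)<-1$. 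For $I=\clopen{0,1}$, since $\xi\leq 1\leq M^{1/2}$ throughout, one has $(M+\xi^2)^{m/2}\asymp M^{m/2}$, so the integral is $\lesssim M^{m/2}\leq M^{1+\Re\nu+m/2}$, the last inequality using $M\geq 1$ together with $\Re(1+\nu)>0$.

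The $L^2$ refinement follows from Cauchy--Schwarz in $\xi$ applied before the symbol bound:
$$
\Big|\int_I\xi^{2\nu+1}b\,\varphi\,d\xi\Big|\,\leq\,\|\varphi\|_{L^2(\rr_+)}\,\Big(\int_I\xi^{4\Re\nu+2}\,|b|^2\,d\xi\Big)^{\!1/2}.
$$
The inner integral is an instance of the preceding scalar estimate with $\Re\nu$ replaced by $2\Re\nu+\tfrac12$ and $m$ replaced by $2m$, hence bounded by $CM^{3/2+2\Re\nu+m}$; taking the square root yields $M^{3/4+\Re\nu+m/2}$, which matches the claimed $S^{p(3/4+\nu+m/2),l(3/4+\nu+m/2)}$. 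Derivatives in $\eta,\sigma,z$ are handled identically since, by Leibniz and the pointwise symbolic bound, each derivative of $|b|^2$ still gains $\langle\eta\rangle^{-|\alpha|}\langle\sigma\rangle^{-|\beta|}$ against the quadratic weight $(M+\xi^2)^m$.

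The main obstacle is essentially bookkeeping: matching the two contributions from the split at $\xi=M^{1/2}$ to the same exponent $1+\Re\nu+m/2$, ensuring uniformity across the $M\gtrsim 1$ and bounded-$M$ regimes, and verifying that the integrability hypotheses are just barely sufficient, in particular for the $L^2$ version on $\clopen{0,1}$, which is the tightest case. Once the splitting trick is identified, the remainder is routine symbol-class manipulation.
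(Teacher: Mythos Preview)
Your proposal is correct and follows essentially the same approach as the paper's proof: reduce to a scalar integral in $\xi$ via the symbol bound on $b$, then extract the power $M^{1+\Re\nu+m/2}$ by scaling. The paper uses the change of variable $\xi\mapsto M^{1/2}\xi$ where you split at $\xi=M^{1/2}$, and for the $L^2$ refinement it applies Cauchy--Schwarz after the rescaling (gaining $M^{-1/4}$ from $\|\varphi(M^{1/2}\,\cdot)\|_{L^2}=M^{-1/4}\|\varphi\|_{L^2}$) rather than before, but these are equivalent organizations of the same computation.
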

\begin{proof} We focus on the first case $I=\clopen{1,\infty}$, the other one being analogous.  Using the symbol estimate
$$
 \big|\p^\alpha_\eta\p^{\beta}_\sigma\p_z^\gamma b(z;\xi,\eta,\sigma)\big|\leq C_{ij\alpha \beta\gamma} \big(\bra \eta\ket^{p}\bra \sigma\ket^{l} + \xi^2\big)^{\frac{m}{2}} \bra \eta\ket^{-\module{\alpha}}\bra \sigma\ket^{-\module{\beta}}
 $$
and denoting the integral \eqref{bsm} by $c(z;\eta,\sigma)$, we get for $1+\nu\notin -\nn_0$ and $-1-\nu-\frac{m}{2}\notin-\nn_0$
 \beq\label{derp}
 \bea 
  \big|\p^\alpha_\eta\p^{\beta}_\sigma\p_z^\gamma c(z;\eta,\sigma)  | &\leq C_{\alpha\beta\gamma}  \bra \eta\ket^{-\module{\alpha}}\bra \sigma\ket^{-\module{\beta}}  \int_1^\infty  \xi^{2\Re\nu+1} \big(\bra \eta\ket^{p}\bra \sigma\ket^{l} + \xi^2\big)^{\frac{m}{2}}  \module{\varphi(\xi)}d\xi \\
  &= C_{\alpha\beta\gamma}  \bra \eta\ket^{-\module{\alpha}}\bra \sigma\ket^{-\module{\beta}}  (\bra \eta\ket^{p}\bra \sigma \ket^l )^{1+\Re \nu+\frac{m}{2}} \fantom \times \int_{\bra \eta\ket^{-p/2}\bra \sigma\ket^{-l/2}}^\infty  \xi^{2\Re\nu+1} \big( 1 + \xi^2\big)^{\frac{m}{2}}  \module{\varphi(\bra \eta\ket^{p/2}\bra \sigma\ket^{l/2} \xi)}d\xi \\
    &\leq C_{\alpha\beta\gamma}  \bra \eta\ket^{l(1+\Re\nu+\frac{m}{2})-\module{\alpha}}\bra \sigma\ket^{p(1+\Re\nu+\frac{m}{2})-\module{\beta}}  \fantom \times \| \varphi\|_\infty \int_{\bra \eta\ket^{-p/2}\bra \sigma\ket^{-l/2}}^\infty  \xi^{2\Re\nu+1} \big( 1 + \xi^2\big)^{\frac{m}{2}}  d\xi \\
        &\leq C_{\alpha\beta\gamma}  \bra \eta\ket^{l(1+\Re\nu+\frac{m}{2})-\module{\alpha}}\bra \sigma\ket^{p(1+\Re\nu+\frac{m}{2})-\module{\beta}}  \fantom \times \| \varphi\|_\infty \fp_{\epsilon\to 0} \int_\epsilon^\infty \xi^{2\Re\nu+1} \big( 1 + \xi^2\big)^{\frac{m}{2}}  d\xi \\
  &\leq  C_{\alpha\beta\gamma}'  \|\varphi\|_\infty  \bra \eta\ket^{l(1+\Re\nu+\frac{m}{2})-\module{\alpha}}\bra \sigma\ket^{p(1+\Re\nu+\frac{m}{2})-\module{\beta}},
  \eea 
\eeq
where the Hadamard finite part integral was computed in Example  \ref{ex:had}. This proves \eqref{bsm}. If  $\varphi\in L^2(\rr_+)$ then we can use the Cauchy--Schwarz inequality in place of the second inequality to get a bound in terms of
  $$
  \norm{ \varphi(\bra \eta \ket^{p/2} \bra \sigma \ket^{l/2}   \cdot )}_{L^2(\rr_+)}=\bra \eta \ket^{-p/4} \bra \sigma \ket^{-l/4} \norm{\varphi}_{L^2(\rr_+)}.
  $$ 
This produces   an additional $\bra \eta \ket^{-p/4} \bra \sigma \ket^{-l/4}$ factor which contributes to shifting  the orders correspondingly.
\end{proof}

\begin{remark}\label{rem:re} If  $b\in S^{m}_{\rm ph} S^{p,l}( \rr^q;\rr_+,\rr^d,\rr^k)$ then in Lemma \ref{lem:bsm}  we can replace the conditions $1+\Re\nu\notin -\nn_0$ and $-1-\Re\nu-\frac{m}{2}\notin-\nn_0$ by $1+\nu\notin -\nn_0$ and $-1-\nu-\frac{m}{2}\notin-\nn_0$. In fact the principal contribution can be computed  in a similar vein, with  an identity instead of an estimate for the absolute value (so $\nu$ appears instead of $\Re \nu$), and all other contributions are dealt with by using Lemma \ref{lem:bsm} in its original form. 
\end{remark}

\subsection{Paired Lagrangian distributions}\label{ss:pld} We recall standard material on paired Lagrangian distributions following mostly  \cite{Melrose1979,Greenleaf1990}, and we generalize some of it to Bessel symbols with values in paired Lagrangian distributions. Paired  Lagrangian distributions were  introduced first by Melrose--Uhlmann \cite{Melrose1979} in the case  intersection of codimension $k=1$, and then in full generality by Guillemin--Uhlmann \cite{Guillemin1981}.

If $Y$ is a (boundaryless) manifold of dimension $d$, we denote by $N^*\diag$ the conormal bundle of the diagonal in $(T^*Y\setminus \zero)\times (T^*Y\setminus \zero)$, i.e.
$$
N^*\diag  = \big\{ \big((y; \eta), (y; -\eta)\big) \big\}.
$$

We are particularly interested in intersecting pairs of Lagrangians $(\Lambda_0,\Lambda_1)$ such that $\Lambda_0=N^*\diag$ and $\Lambda_1$ is a flow-out Lagrangian.  

A useful model case on $Y=\rr^{d}$ is provided by the pair $(\widetilde \Lambda_0, \widetilde\Lambda_1)$ given by $\widetilde \Lambda_0=N^*\diag$ and
$$
\widetilde\Lambda_1 = \{ ((y; \eta), (y'; \eta')) \st y_2=y_2', \ \eta_1=\eta'_1=0, \ \eta_2=-\eta_2' \},
$$
 where the notation $y=(y_1,y_2) \in \rr\times \rr^{d}$ is used for points in $\rr^{d}$.  The Lagrangian $\widetilde\Lambda_1$ is the flow-out corresponding to the codimension $1$ involutive manifold $\{\eta_1=0\}$, i.e.~$\widetilde\Lambda_1$ consists of pairs of points $(y; \eta)$ and $(y'; \eta')$ which are connected by the Hamilton flow of $\eta_1$.  
  Let us recall that for $q\in\nn$, $p,l\in\cc$, the space of product-type symbols ${S}^{p,l}(\rr^{q}; \rr^d,\rr)$ consists by definition of all smooth functions $c$ on $\rr^{q}\times \rr^d\times \rr$  for all $K\subset \rr^{2d+1}$ compact,
 \beq
 \big|\p^\alpha_\eta\p^{\beta}_\sigma \p_z^\gamma c(z;\eta,\sigma)\big|\leq C_{\alpha \beta\gamma} \bra \eta\ket^{\Re p-\module{\alpha}}\bra \sigma\ket^{\Re l-\module{\beta}}.
 \eeq
  Then,  $I^{p,l}(\rr^d\times \rr^d; \widetilde{\Lambda}_0,\widetilde{\Lambda}_1)$ is the space of distributions $u$ on $\rr^d\times \rr^d$ of the form 
  \beq\label{eq:kyy}
  u(y,y')=\int e^{i((y_1-y_1'-s)\eta_1 + (y_2-y_2')\cdot \eta_2 + s \cdot \sigma)} c(y,y',s;\eta,\sigma) \,d\sigma \, ds \, d\eta 
  \eeq
  with $c\in{S}^{p+\12,l-\12}(\rr^{2d+1}; \rr^d,\rr)$, and the latter space induces a Fréchet topology on  $I^{p,l}(\rr^d\times \rr^d; \widetilde{\Lambda}_0,\widetilde{\Lambda}_1)$.  More generally, for $m\in\cc$ we can  consider  the space 
  \beq\label{eq:Ssev}
   S^{m} I^{p,l} (\rr^d\times \rr^d;\rr_+,\widetilde{\Lambda}_0,\widetilde{\Lambda}_1)
  \eeq
    of  Bessel symbols with values in paired Lagrangian distributions which are obtained by the integral formula \eqref{eq:kyy}  applied to $c\in  S^{m} {S}^{p+\12,l-\12} (  \rr^{2d+1}; \rr_+,\rr^d,\rr)$.
  
  If  $(\Lambda_0,\Lambda_1)$ is a cleanly intersecting pair of Lagrangian and the intersection is of codimension $1$, there exists a canonical transformation $\chi:(T^*Y\setminus \zero)\times(T^*Y\setminus \zero)$ which maps $\chi(\Lambda_0)\subset \widetilde\Lambda_0$ and $\chi(\Lambda_1)\subset \widetilde\Lambda_1$. This allows one to define $I^{p,l}(Y\times Y; {\Lambda}_0,{\Lambda}_1)$ in greater generality as follows. 
  
  \begin{definition} The space $I^{p,l}(Y\times Y; {\Lambda}_0,{\Lambda}_1)$ consists of distributions which are locally finite sums of terms of the form $F_i u_i(\cdot,\cdot)$, where each $F_i$ is a Fourier integral operator associated with a canonical transformation $\chi_i$ as above and $u_i\in I^{p,l}(\rr^d\times \rr^d; \widetilde{\Lambda}_0,\widetilde{\Lambda}_1)$.  The space  $S^{m} I^{p,l}(Y\times Y;\rr_+, {\Lambda}_0,{\Lambda}_1)$  is defined analogously with $u_i\in    S^{m} I^{p,l} (\rr^d\times \rr^d;\rr_+,\widetilde{\Lambda}_0,\widetilde{\Lambda}_1)$ instead (and $F_i$ independent on $\xi$).
  \end{definition}     

In the $\xi$-independent version, the fundamental fact  is that if $u\in I^{p,l}(Y\times Y; {\Lambda}_0,{\Lambda}_1)$ then
$$
u \in I^{p+l}(Y\times Y; {\Lambda}_0\setminus (\Lambda_0\cap{\Lambda}_1)) \mbox{ and } u\in I^{p}(Y\times Y; {\Lambda}_1\setminus (\Lambda_0\cap{\Lambda}_1)).
$$
Therefore $u$ has a symbol $\sigma_0(u)$ as a Fourier integral operator in the former space, and another one $\sigma_1(u)$   in the sense of the latter space. 

   We will often occasionally $S^{m} I^{p,l}( Y\times Y;\rr_+, {\Lambda}_0,{\Lambda}_1)$ by $S^m I^{p,l}$ and write $S^{m}_{\rm ph} I^{p,l}( Y\times Y;\rr_+, {\Lambda}_0,{\Lambda}_1)$ or $S^m_{\rm ph} I^{p,l}$ for classes with $\xi$-poly-homogeneous symbols.

   \begin{remark}\label{rem:ano} For fixed  $\xi\in\rr_+$, each $u\in S^{m} I^{p,l}(Y\times Y;\rr_+, {\Lambda}_0,{\Lambda}_1)$ is a paired Lagrangian distribution in $I^{\frac{mp}{2}+\frac{m}{4}-\frac{1}{2},\frac{ml}{2}-\frac{m}{4}+\frac{1}{2}}(Y\times Y; {\Lambda}_0,{\Lambda}_1)$.
Furthermore if $m\leq 0$, using  the  estimate  $\big(\bra \eta\ket^{p}\bra \sigma\ket^{l} + \xi^2\big)^{\frac{m-i}{2}}\leq C \big(\bra \eta\ket^{p}\bra \sigma\ket^{l} \big)^{\frac{m}{4}-\frac{i}{2}} \bra \xi \ket^{\frac{m}{2}}$ on the level of symbols, we can continuously embed $S^{m} I^{p,l}$ in a space of functions  with values in $I^{\frac{mp}{4}+\frac{m}{8}-\frac{1}{2},\frac{ml}{4}-\frac{m}{8}+\frac{1}{2}}$, with  $O(\bra \xi \ket^{\frac{m}{2}})$ dependence on $\xi\in\rr_+$.
   \end{remark}

   \begin{lemma}\label{lem:bsm2} Let $u\in S^{m}_{\rm ph} I^{p-\12,l+\12}( \rr^q;\rr_+,\rr^d,\rr^k)$, $\varphi\in L^\infty(\rr_+)$ and $\nu\in \cc$ such that  $1+\nu\notin -\nn_0$ and $-1-\nu-\frac{m}{2}\notin-\nn_0$. Let either $I=\clopen{1,\infty}$ and $\Re 2\nu+ 1 + m <  -1$, or $I=\clopen{0,1}$ and $\Re 2\nu+ 1  > -1$. Then
   \beq\label{eq:key1}
  v:= \int_I \xi^{2\nu + 1} u(\xi)\varphi(\xi) d \xi   \in {I}^{(p+\12)(1+\nu+\frac{m}{2})-\12,(l-\12)(1+\nu+\frac{m}{2})+\12}(Y\times Y; {\Lambda}_0,{\Lambda}_1).    \eeq
  Moreover, if $\varphi\in L^2(\rr_+)$ then each semi-norm of $v$ in ${I}^{(p+\12)(\frac{3}{4}+\nu+\frac{m}{2})-\12,(l-\12)(\frac{3}{4}+\nu+\frac{m}{2})+\12}$ is bounded by a constant times $\norm{\varphi}_{L^2(\rr_+)}$.  
      Furthermore, if $\Re \nu +1 + \frac{m}{2}\geq 0$ then for $i=0,1$,
$
      \sigma_i (v)=  \int_I  \xi^{2\nu+1} \big(\sigma_i(b)\big)(\xi) d\xi.  
   $
   \end{lemma}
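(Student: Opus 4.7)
The strategy is to reduce to the model case and then apply Lemma~\ref{lem:bsm} to the amplitude. First I would use the defining local decomposition of paired Lagrangian distributions to write $u$ as a locally finite sum $u=\sum_i F_i u_i$, where each $F_i$ is an $\xi$-independent Fourier integral operator associated to a canonical transformation mapping $(\Lambda_0,\Lambda_1)$ to the model pair $(\widetilde\Lambda_0,\widetilde\Lambda_1)$, and each $u_i\in S^m_{\rm ph} I^{p-\12,l+\12}(\rr^d\times\rr^d;\rr_+,\widetilde\Lambda_0,\widetilde\Lambda_1)$. Because the $F_i$ do not depend on $\xi$, they commute with $\int_I \xi^{2\nu+1}(\cdot)\varphi(\xi)\,d\xi$, so it suffices to treat the case where $u$ is in model form. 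There one writes $u$ via the oscillatory integral formula \eqref{eq:kyy} with amplitude $c\in S^m_{\rm ph} S^{p,l}(\rr^{2d+1};\rr_+,\rr^d,\rr)$ and swaps the order of integration, so that $v$ is given by the same oscillatory formula with amplitude $\tilde c(y,y',s;\eta,\sigma):=\int_I \xi^{2\nu+1} c(y,y',s;\xi,\eta,\sigma)\varphi(\xi)\,d\xi$.

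By Lemma~\ref{lem:bsm}, together with Remark~\ref{rem:re} to weaken the hypotheses on~$\nu$ in the polyhomogeneous setting, $\tilde c$ lies in the appropriate product-type symbol class, with seminorms bounded by seminorms of $c$ and by $\|\varphi\|_{L^\infty(\rr_+)}$ (respectively $\|\varphi\|_{L^2(\rr_+)}$ in the improved variant). The defining relation between $I$-spaces and their amplitudes then yields the claimed membership of $v$ as well as the $L^2$-type seminorm estimate. For the principal symbol identity under the extra hypothesis $\Re\nu+1+\tfrac{m}{2}\geq 0$, the homogeneous scaling argument underlying Lemma~\ref{lem:bsm} produces honest convergent integrals at both endpoints of~$I$ rather than Hadamard finite parts; since $\sigma_0$ and $\sigma_1$ are continuous restriction/quotient maps of amplitudes onto the respective Lagrangians, they commute with the $\xi$-integration by dominated convergence, yielding $\sigma_i(v)=\int_I \xi^{2\nu+1}\sigma_i(b)(\xi)\,d\xi$ for $i=0,1$.

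The main obstacle I anticipate is justifying the exchange of integration orders invoked above: the outer oscillatory integral in $(s,\eta,\sigma)$ and the inner Hadamard finite part in~$\xi$ are both delicate, so one cannot appeal to Fubini directly. A natural route is to truncate near the finite-part endpoint of~$I$, interchange the now absolutely convergent integrals, and then pass to the finite-part limit using the uniform symbol estimates of Lemma~\ref{lem:bsm}, with enough uniformity in $(y,y')$ on compacts to produce a bona fide distribution on $Y\times Y$ continuous in the appropriate seminorms.
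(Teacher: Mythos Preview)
Your approach is essentially the paper's: reduce to the model form and apply Lemma~\ref{lem:bsm} (with Remark~\ref{rem:re}) at the amplitude level. Two small corrections are worth making. First, there is no Hadamard finite part in this lemma---the hypotheses $\Re 2\nu+1+m<-1$ (for $I=\clopen{1,\infty}$) or $\Re 2\nu+1>-1$ (for $I=\clopen{0,1}$) are precisely the absolute integrability conditions, so the $\xi$-integral of the amplitude is an honest Bochner integral in the Fr\'echet symbol space and no interchange with the oscillatory integral is needed: you simply \emph{define} $v$ as the paired Lagrangian distribution with amplitude $\tilde c$. Second, your justification of the principal-symbol identity via ``continuity of $\sigma_i$ and dominated convergence'' is not quite the right mechanism; the issue is whether lower-order amplitude terms could integrate to give a contribution of the same order as the leading term. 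The paper's argument is that when $\Re\nu+1+\tfrac{m}{2}\geq 0$ the output orders in \eqref{eq:key1} are monotone in $p$ and in $p+l$, so a drop in amplitude order forces a drop in paired-Lagrangian order and hence only the principal part of the amplitude survives in $\sigma_i(v)$.
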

\begin{proof} The first claim follows directly from Lemma \ref{lem:bsm} and the paired Lagrangian classes definitions (using also Remark \ref{rem:re} to weaken the assumption on $\nu$).  If in addition  $\Re \nu+ 1 + \frac{m}{2}\geq 0$ then decreasing $p+l$ decreases the added orders of \eqref{eq:key1}. Similarly, decreasing $p$ decreases the  first order of  \eqref{eq:key1}. Therefore, only the principal symbols $\sigma_i(u)$ of $u$ contribute to the principal symbols $\sigma_i(v)$. 
\end{proof}



Our main case of interest is $m=-2$,  $p=\12$  and $l=\frac{3}{2}$,   which gives $v\in I^{\nu-\12,\nu+\12}(Y\times Y; {\Lambda}_0,{\Lambda}_1)$, consistently with complex powers constructions of \cite{Antoniano1985,joshi}. 

\subsection{Parameter-dependent Klein--Gordon operators}

Let $h$ now be a  Lorentzian metric on a $d=n-1$-dimensional manifold  $Y$.

Let $\Sigma=\{ (y;\eta) \in T^*Y \st  \module{\eta}_{h}^2=0 \}$ be the characteristic set of $\square_{h}$. Then $\Sigma$ has four orientations corresponding to four choices of types of parametrices corresponding to respectively forward, backward, Feynman and anti-Feynman parametrices. 

We use the notation from \sec{ss:pld} with $d=n-1$,  $\Lambda_0=N^*\diag$, and $\Lambda_{1}$ being the flowout of $\Sigma$ in a chosen orientation. For the sake of definiteness we take the orientation corresponding to the forward (also called retarded) parametrix.        

In  \cite[\S6]{Melrose1979}, Melrose--Uhlmann construct in particular  a parametrix for inverses of the Klein--Gordon operator $\square_{h}+\xi^2$ for fixed $\xi$ using the calculus of paired Lagrangian distributions. Here we need a similar result, but with uniform control in $\xi>0$.

 
 \begin{proposition}\label{prop:parKG}  
Let $(\square_{h}+\xi^2)^{-1}$ be the  retarded propagator of $(\square_{h}+\xi^2)$.    Then
 \beq
 (\square_{h}+\xi^2)^{-1}\in     S^{-2}_\ph I^{\12,\frac{3}{2}}(Y\times Y;\rr_+, {\Lambda}_0,{\Lambda}_1),
 \eeq
 and its $\xi$-valued principal symbol at $\Lambda_0\setminus\Lambda_1$ is
$$
\sigma_0\big((\square_{h}+ \xi^2)^{-1} \big)(y;\eta)=(\module{\eta}_{h}^2 +\xi^2)^{-1}.
$$
 \end{proposition}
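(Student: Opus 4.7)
\emph{Proof proposal.} I would obtain the parametrix by carrying out the Melrose--Uhlmann construction \cite{Melrose1979} for $\square_{h} + \xi^2$ with $\xi$ treated as a Bessel-type parameter, and then verify that the resulting amplitudes lie in the Bessel-valued symbol class of Definition~\ref{def:Sev} uniformly in $\xi \geq 0$. Since $\xi^2$ is pseudodifferentially lower order than $\square_{h}$, the characteristic variety $\Sigma = \{|\eta|_{h}^2 = 0\}$ and the Lagrangians $\Lambda_0, \Lambda_1$ are independent of $\xi$; only the oscillatory integral amplitudes acquire $\xi$-dependence. By global hyperbolicity of $(Y,h)$ the retarded inverse of $\square_{h}+\xi^2$ is uniquely determined for each $\xi \geq 0$, so it is enough to exhibit a forward-supported right parametrix $E(\xi)\in S^{-2}_{\mathrm{ph}} I^{1/2,3/2}$ and iterate away a smoothing error.

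The construction splits microlocally into two pieces. On the elliptic complement of $\Sigma$, the full symbol $|\eta|_{h}^2+\xi^2$ is bounded below by $c(\langle\eta\rangle^2 + \xi^2)$; standard parameter-dependent pseudodifferential calculus yields a pseudodifferential parametrix with symbol $(|\eta|_{h}^2+\xi^2)^{-1}$ plus lower-order corrections, manifestly in $S^{-2}_{\mathrm{ph}}$ with pure $\Lambda_0$-behaviour. This already produces the principal symbol on $\Lambda_0\setminus\Lambda_1$ claimed in the statement. Near any point of $\Sigma$, a $\xi$-independent Fourier integral operator $F$ conjugates $\square_{h}$ to a Melrose--Uhlmann normal form, mapping $(\Lambda_0,\Lambda_1)$ to $(\widetilde\Lambda_0,\widetilde\Lambda_1)$; the retarded inverse of the model operator plus $\xi^2$ then admits an oscillatory integral representation of the form \eqref{eq:kyy} with leading amplitude essentially $c(\cdot;\eta,\sigma,\xi) = (\eta_1\sigma + \xi^2 + i0)^{-1}$ times a cutoff selecting the forward sheet. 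A direct check shows that, away from the paired Lagrangian intersection, this amplitude lies in $S^{-2}_{\mathrm{ph}} S^{1,1}$: each derivative in $\xi$ improves the joint scale $\langle\eta\rangle\langle\sigma\rangle+\xi^2$ by the required factor, while $\eta$-, $\sigma$-derivatives only produce the expected $\langle\eta\rangle^{-|\alpha|}\langle\sigma\rangle^{-|\beta|}$ decay, matching Definition~\ref{def:Sev} with $m=-2$, $p=\tfrac12$, $l=\tfrac32$. Subprincipal corrections from conjugation by $F$ and from Neumann iteration belong to strictly lower orders in $S^{\bullet}_{\mathrm{ph}} S^{1,1}$, and the polyhomogeneous $\xi$-expansion is realized by a Borel-type summation inside the Bessel calculus. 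A microlocal partition of unity then assembles the elliptic and hyperbolic contributions into a forward-supported right parametrix $E(\xi)$ in the claimed class, and uniqueness of the retarded inverse identifies $E(\xi)$ with $(\square_{h}+\xi^2)^{-1}$ modulo smoothing.

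\emph{Main obstacle.} The decisive technical point is the uniform-in-$\xi$ verification of the Bessel symbol estimates in the transition regime $\langle\eta\rangle\langle\sigma\rangle\sim\xi^2$, where neither the large-$\xi$ pseudodifferential viewpoint nor the $\xi=0$ paired Lagrangian viewpoint is individually dominant. One must show simultaneously that the hyperbolic amplitudes, the subprincipal corrections produced by FIO conjugation, and the Neumann iteration all stay in $S^{\bullet}_{\mathrm{ph}} S^{1,1}$ with the correct dependence on $\xi$; this bookkeeping, together with the correct handling of the $i0$-prescription in the oscillatory integral so that the contour deformation respects forward propagation (rather than producing a Feynman-type distribution), is the crux of the proof.
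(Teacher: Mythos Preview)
Your strategy is essentially the one the paper follows: reduce to a model via a $\xi$-independent FIO conjugation, verify the Bessel symbol estimates explicitly for the model, and then control the remainder. The paper organizes the details somewhat differently, and the differences are worth noting because they sidestep exactly the obstacle you flag.

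First, rather than splitting microlocally into elliptic and hyperbolic regions, the paper treats the \emph{flat} Klein--Gordon operator $\square_{h_0}+\xi^2$ on $\rr^d$ in one shot: the retarded fundamental solution is written as an explicit oscillatory integral with amplitude $(-(\tau-i0)^2+\bar\eta^2+\xi^2)^{-1}$, the explicit canonical transformation~\eqref{eq:yyee} is applied, and after introducing the auxiliary $\sigma$-variable one reads off the product-type symbol $\psi_\pm(\eta,\sigma)\bigl(\sigma(\eta_1+2|\eta_2|)+\xi^2\bigr)^{-1}$. The verification that this lies in $S^{-2}_{\rm ph}S^{1,1}$ is then a direct inspection (including across your ``transition regime'' $\langle\eta\rangle\langle\sigma\rangle\sim\xi^2$), with the $i0$-prescription already built into the explicit formula. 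No separate elliptic parametrix is needed.

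Second, for general $h$ the paper does \emph{not} iterate a Neumann series in the paired Lagrangian calculus. Instead it uses the algebraic identity
\[
F'(\square_h+\xi^2)^{-1}F \;=\; F'F(\square_{h_0}+\xi^2)^{-1} \;+\; F'(\square_h+\xi^2)^{-1}\,R\,(\square_{h_0}+\xi^2)^{-1},
\]
where $R=F\square_{h_0}-\square_h F$ is microlocally trivial at the point of interest (and along its bicharacteristic), so the second term is already smoothing there. To get the $\xi$-decay on that remainder one expands $(\square_h+\xi^2)^{-1}=(A+\langle\xi\rangle^2)^{-1}$ as a finite geometric series in $\langle\xi\rangle^{-2}$ plus an $O(\langle\xi\rangle^{-2N})$ term; each piece is then in $S^{-2}_{\rm ph}I^{\frac12,\frac32}$ or arbitrarily regularizing in $\xi$. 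This is cleaner than carrying FIO-conjugation subprincipal corrections and a Borel summation through the Bessel calculus, which is where your proposal would require the most bookkeeping.

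In short: your outline is correct, but the paper's route---explicit flat model first, then intertwine and peel off $\langle\xi\rangle^{-2}$ factors---is more direct and avoids the uniform-in-$\xi$ symbol accounting that you rightly identify as the main difficulty.
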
 
 \begin{proof} The proof is divided into two steps.
 
\step{1} In the first step, for $\xi>0$ we consider the special case of the constant coefficients Klein--Gordon operator $$\square_{h_0}+\xi^2=-D_t^2+D_{\bar y}^2+\xi^2$$ on $\rr^d$, where $h_0$ is the flat metric and we write $y=(t,\bar y)$ for the variables  on $\rr^d$. We denote by $(\square_{h_0}+\xi)^{-1}$ the retarded inverse of $\square_{h_0}+\xi^2$. Its Schwartz kernel is a multiple of the translation-invariant distribution $u(t-t',\bar{y}-\bar{y}')$, where 
 \beq\label{eq:uty}
 u(t,\bar{y})=\int_{\rr^d} e^{-i (t \tau+\bar{y} \bar{\eta})}\big(-(\tau-i0)^{-1}+\bar{\eta}^2 +\xi^2  \big)^{-1} d\tau\,d\bar\eta,
 \eeq
 $\eta=(\tau,\bar{\eta})$ being the dual variables. In a conic neighborhood of the component $\{ \tau = \pm \module{\bar\eta}\}$ of the characteristic set of $\square_{h_0}$, the canonical transformation
\beq\label{eq:yyee}
y_1=t, \quad 
y_2= \bar{y}\pm\frac{\bar\eta}{\module{\bar\eta}} t, \quad
\eta_1=\tau \mp \module{\bar\eta}, \quad
\eta_2= \bar\eta
\eeq
maps the Lagrangian pair $(\Lambda_0,\Lambda_1)$ (strictly speaking, its image under  $(y,y';\eta,\eta')\mapsto (y-y';\eta+\eta')$) to the Lagrangian pair $(\widetilde{\Lambda}_0,\widetilde{\Lambda}_1)$ given by
$$
\bea
\widetilde{\Lambda}_0= \{(y_1,y_2;\eta_1,\eta_2)\st y_1=0, \ y_2=0 \}, \quad
\widetilde{\Lambda}_1= \{(y_1,y_2;\eta_1,\eta_2)\st y_2=0, \ \eta_1=0 \}.
\eea
$$
By slight abuse of notation we also write $y=(y_1,y_2)$ and $\eta=(\eta_1,\eta_2)$. Let $\chi_\pm\in \cf(\rr^{d},[0,1])$ be equal $1$ on $\{ \tau=\pm\module{\bar \eta}, \ \tau^2+\bar \eta^2\geq 1\}$ and supported in a small neighborhood of that set. We consider the microlocalized expression
\beq\label{eq:pmi}
\sum_{\pm}   \int_{\rr^d} e^{-i (t \tau+\bar{y} \bar{\eta})}\chi_\pm(\tau,\bar\eta)\big(-(\tau-i0)^{2}+\bar{\eta}^2 +\xi^2  \big)^{-1} d\tau\,d\bar\eta
\eeq
and define  $(\square_{h_0}+\xi^2)^{\inv}$ through corresponding  translation invariant Schwartz kernel. Then, $(\square_{h_0}+\xi^2)^{\inv}$ is a parametrix of $\square_{h_0}+\xi^2$ (with $\xi$-independent error), and we want to prove that it is a $\xi$-dependent paired Lagrangian distribution. We apply the transformation \eqref{eq:yyee} to   \eqref{eq:pmi}, and find that  the two respective summands  become
$$
\bea
& \int_{\rr^d} e^{-i (y_1 \eta_1+y_2 \eta_2)}\chi_\pm(\tau(\eta),\bar\eta(\eta)) \big( \eta_1(\eta_1+2\module{\eta_2}) +\xi^2  \big)^{-1} d\eta \\
& = \int_{\rr^{d+1}} e^{-i (y_1 \eta_1+y_2 \eta_2+s(\eta_1-\sigma))}\psi_\pm(\eta,\sigma) \big( \sigma (\eta_1+2\module{\eta_2}) +\xi^2  \big)^{-1} d\eta \, d\sigma
\eea
$$
for some smooth $\psi$ which equals $1$ near $\{\eta_1=0, \ \sigma^2+\eta^2\geq 1\}$ and is supported in a small neighborhood of that set. Taking into account the support properties of $\psi$, we can see that
$$
c_\pm(y,s;\xi,\eta,\sigma)\defeq \psi_\pm(\sigma,\eta)\big( \sigma (\eta_1+2\module{\eta_2}) +\xi^2  \big)^{-1}
$$
 is a symbol   in $S^{-2}_{\rm ph}S^{1,1}(  \rr^{d+1}; \rr_+,\rr^d,\rr)$. In consequence, 
 $$
 (\square_{h_0}+\xi^2)^{\inv}\in S^{-2}_\ph I^{\12,\frac{3}{2}} ( \rr^d\times \rr^d;\rr_+,{\Lambda}_0,{\Lambda}_1).
 $$
 
 \step{2} For each $q\in T^*Y\setminus \zero$ we can find FIOs $F,F'$ associated with  canonical transformations mapping the respective Lagrangian pairs $\Lambda_0,\Lambda_1$ for $h$ and $h_0$ one to another, such that $F'$ is microlocally the inverse of $F$ at $q$ and $R:= F \square_{h_0}- \square_{h} F$ satisfies $q\notin \wf'(R)$, and $q'\notin \wf'(R)$ for any $q'$ in the flowout of $q$ if $q$ is characteristic. 
 
 This follows from \cite[Prop.~5.1]{joshi} in the case when $h$ is  ultra-static, but the proof applies to the general case with only minor modifications that we briefly explain. The first step is \cite[Lem.~5.2]{joshi} which constructs a symplectomorphism in a conic neighborhood of $q$ (containing the whole bicharacteristic through $q$), such that the pullback of  $\sigma_0(\square_{h})$ is  $\sigma_0(\square_{h_0})$. The key step is to factorize $\sigma_0(\square_{h})=\eta_1 \eta_2$ where $\eta_1,\eta_2$ are homogeneous of degree one, Poisson commute and have non-vanishing differentials near the characteristic variety. This is straightforward in the static case; more generally we can use for instance the coordinates and the factorization in \cite[(6.10)]{GOW} locally near $q$, translated to the level of symbols (note that global aspects and lower order terms do not matter here), and then extend via the bicharacteristic flow. The remaining steps of the proof of \cite[Prop.~5.1]{joshi} can be  repeated verbatim.
 
   Next, we can write
 \beq\label{FFp}
F' (\square_{h}+\xi^2)^{-1} F =  F' F (\square_{h_0}+\xi^2)^{-1} + F'(\square_{h} + \xi^2)^{-1} R  (\square_{h_0} +\xi^2 )^{-1}.
\eeq
By Step 1 the first summand is in  $S^{-2}_\ph I^{\12,\frac{3}{2}}$. Setting $A=\square_{h}-\one$, the second summand can be written as 
 \beq\label{FFp2}
 F'\bigg(-\sum_{j=1}^N (-1)^j\bra\xi\ket^{-2j}A^{{j-1}} +(-1)^N \bra \xi \ket^{-2N} A^N (A+\bra\xi\ket^2)^{-1} \bigg) R  (\square_{h_0} +\xi^2 )^{-1}
\eeq
for arbitrary $N\in\nn$.  The first $N$ terms are  in $S^{-2}_\ph I^{\12,\frac{3}{2}}$, and the last term is $O(\bra \xi \ket^{-2N})$ with values in  operators at $q$ by microlocal mapping properties of $R$ and $(A+\bra\xi\ket^2)^{-1}$. The principal symbol is deduced from \eqref{FFp}. 
 \end{proof}

 For ease of notation we will write
 \beq\label{eq:notI}
I^{\alpha}(Y;\Lambda_0,\Lambda_1):={I}^{\frac{\alpha}{2} - \12,\frac{\alpha}{2}+\12}(Y\times Y; {\Lambda}_0,{\Lambda}_1).
 \eeq
 Note in particular $\Diff^\alpha(Y)\subset I^{\alpha}(Y;\Lambda_0,\Lambda_1)$.

 \subsection{Regularized integrals and forward complex powers}\label{ss:reg}
  
 Given a globally hyperbolic spacetime $(Y,h)$ let us introduce the following parameter-dependent classes of paired Lagrangian distributions on $Y$ for $m\in \nn$ and $s\in \nn_0^m$:
 \beq\label{defIms}
\cI^{m,s}:= \left\{ (\wo+\xi^2)^{-1} L_1 \circ \cdots  \circ(\wo+\xi^2)^{-1} L_m   \st L_k\in \Diff^{s_k}(Y), \ k=1,\dots,m \right\},
 \eeq
and by convention $\cI^{0,0}$ consists of multiples of the identity. The compositions with forward inverses $(\wo+\xi^2)^{-1}$ are well-defined on spaces on future supported Sobolev spaces. Note that all the $L_k$ are $\xi$-independent.  From the identity
$$
(\wo+\xi^2)^{-1}=\xi^{-2} - \xi^{-2} \wo(\wo+\xi^2)^{-1}
$$  
we deduce for each $\alpha\in\cc$ the inclusion
\beq\label{eq:rec1}
\bea
\xi^\alpha\cI^{m,s}
&\subset \xi^{\alpha-2} \Diff^{s_m}(Y)\circ  \cI^{m-1,s|_{m-1}} + \xi^{\alpha-2}\Diff^{2}(Y) \circ \cI^{m,s}\\
&\subset \xi^{\alpha-2} I^{s_m}(Y;\Lambda_0,\Lambda_1)\circ  \cI^{m-1,s|_{m-1}}+ \xi^{\alpha-2}  I^{2}(Y;\Lambda_0,\Lambda_1) \circ \cI^{m,s},
\eea
\eeq
where $s|_{m-1}\in \nn_0^{m-1}$ is the multi-index $s$ with the $m$-th component removed. Similarly, using the identity 
$$
(\wo+\xi^2)^{-1} = \wo^{-1} - \wo^{-1}\xi^2 (\wo+\xi^2)^{-1}
$$
where $\wo^{-1}\in I^{-2}(Y;\Lambda_0,\Lambda_1)$, we can deduce 
\beq\label{eq:rec2}
\bea
\xi^\alpha\cI^{m,s}\subset \xi^\alpha I^{s_m-2}(Y;\Lambda_0,\Lambda_1)\circ  \cI^{m-1,s|_{m-1}}+ \xi^{\alpha+2}  I^{-2}(Y;\Lambda_0,\Lambda_1) \circ \cI^{m,s},
\eea
\eeq

Notice that if  $b\in \xi^\alpha\cI^{m,s}$  for  $\Re \alpha-2m<-1$, resp.~$\Re \alpha>-1$, then it is integrable in $\xi$ on the interval $\clopen{1,+\infty}$, resp.~$\open{0,1}$. More generally, we can  compute the regularized integrals
$$
 \fint_1^\infty \xi^\alpha b(\xi) d\xi, \quad \fint_0^1 \xi^\alpha b(\xi) d\xi, \quad \fint_0^\infty \xi^\alpha b(\xi) d\xi 
$$
in the following way.  For the first one, we use \eqref{eq:rec1} repeatedly to reduce the computation to terms that are either integrable in $\xi$ (and just integrate them over $\clopen{1,+\infty}$), or are in $\xi^{\beta}I^{r}(Y;\Lambda_0,\Lambda_1) \circ \cI^{0,0}=\xi^{\beta}I^{r}(Y;\Lambda_0,\Lambda_1) $ for  some $\beta,r$. Similarly for the second one, we use \eqref{eq:rec2}  to reduce to terms that are either integrable in $\xi$ (and integrate them over $\clopen{0,1}$), or are in $\xi^{\beta}I^{r}(Y;\Lambda_0,\Lambda_1)$. In this way we are reduced to computing regularized integrals of $\xi^\beta$ which are given by \eqref{eq:regi}. 

\begin{proposition}\label{prop:fint} Let $\varphi\in L^\infty(\rr_+)$, $\alpha \notin 2\zz+1$ and $b\in \xi^\alpha I^{r}(Y;\Lambda_0,\Lambda_1) \circ\cI^{m,s}$ for some $r\in \rr$, $m\in \nn_0$ and $s\in \nn_0^m$.
Then
\beq
\fint_0^\infty  b(\xi)\varphi(\xi) d\xi  \in I^{\alpha+r-2m+|s|+1}(Y;\Lambda_0,\Lambda_1).
\eeq
Moreover, if $\varphi\in L^2(\rr_+)$ then each semi-norm in $I^{\alpha+r-2m+|s|+\frac{3}{4}}(Y;\Lambda_0,\Lambda_1)$ is bounded by a constant times $\norm{\varphi}_{L^2(\rr_+)}$.  
 \end{proposition}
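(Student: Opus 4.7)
The plan is to expand $b(\xi)$ as a finite sum of terms via iterated application of the algebraic identities \eqref{eq:rec1} and \eqref{eq:rec2}, carried out to a depth at which the regularized integral of every resulting summand against $\varphi$ can be evaluated either in closed form using \eqref{eq:regi} or controlled inside the paired Lagrangian calculus of Lemma \ref{lem:bsm2}.

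Concretely, split $\fint_0^\infty=\fint_0^1+\fint_1^\infty$ and treat the two pieces separately. For the piece $\fint_1^\infty$, iterate \eqref{eq:rec1}: each step replaces an element of $\xi^\beta I^{r'}\cI^{m',s'}$ by a sum of a term in $\xi^{\beta-2}I^{r'+s'_{m'}}\cI^{m'-1,s'|_{m'-1}}$ (one fewer propagator, $I$-order raised by $s'_{m'}$) and a term in $\xi^{\beta-2}I^{r'+2}\cI^{m',s'}$ (same number of propagators, one more $\wo$ absorbed). After $N$ iterations with $N$ large enough that $\Re(\alpha-2N)-2m<-1$, every leaf of the resulting finite binary tree either has no remaining propagator (a pure monomial leaf) or lies in a class $\xi^\gamma I^{r''}\cI^{m'',s''}$ with $\Re\gamma-2m''<-1$, hence integrable at $\xi=+\infty$. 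The piece $\fint_0^1$ is handled analogously by iterating \eqref{eq:rec2}, producing leaves that are integrable at $\xi=0$.

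The pure monomial leaves contribute scalars $\fint_I\xi^\gamma\varphi(\xi)\,d\xi$ times fixed paired Lagrangian distributions: since $\alpha\notin 2\zz+1$, none of the exponents $\gamma=\alpha\pm 2k$ arising in the iteration equals $-1$, so by \eqref{eq:regi} each such scalar is well-defined and bounded by $C\|\varphi\|_{L^\infty(\rr_+)}$. The remaining leaves, which still contain factors $(\wo+\xi^2)^{-1}$, are estimated by Lemma \ref{lem:bsm2}: representing each such factor as an element of $S^{-2}_\ph I^{\12,\frac{3}{2}}$ via Proposition \ref{prop:parKG} and using the composition rules of the parameter-dependent calculus, each leaf fits into a class $\xi^\gamma S^{m^\sharp}_\ph I^{p-\12,l+\12}$ for suitable $m^\sharp,p,l$ compatible with the target order. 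A direct bookkeeping of the two recursions shows that the quantity $\gamma+(\text{paired Lagrangian order})-2(\text{remaining propagators})+(\text{remaining }|s|)$ is preserved by every iteration step; after integration against $\xi^\gamma\varphi\,d\xi$ this produces the claimed $+1$ in the target order, so every contribution lands in $I^{\alpha+r-2m+|s|+1}(Y;\Lambda_0,\Lambda_1)$ with the $L^\infty(\rr_+)$-seminorm bound.

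For the $L^2$-refinement the same argument applies, using the Cauchy--Schwarz improvement in Lemma \ref{lem:bsm2} on the propagator-containing leaves (replacing $1+\nu+\frac{m}{2}$ by $\frac{3}{4}+\nu+\frac{m}{2}$, i.e.\ gaining $\frac{1}{4}$ in each of the two paired Lagrangian orders), together with Cauchy--Schwarz applied to the scalar integrals on the monomial leaves, where $|\fint_I\xi^\gamma\varphi\,d\xi|\lesssim\|\varphi\|_{L^2(\rr_+)}$ under the sharper integrability threshold (arranged by continuing the iteration one additional step if necessary). The main technical obstacle is the careful order bookkeeping through the expanding binary tree generated by \eqref{eq:rec1}--\eqref{eq:rec2}, in particular verifying that both branches at each step preserve the relevant dimensional invariant and that, after summing the finitely many leaves, the scalar-monomial contributions (which have ``trivial'' $\xi$-dependence) and the propagator-containing remainders (controlled by Lemma \ref{lem:bsm2}) all land in the single target class $I^{\alpha+r-2m+|s|+1}$ with compatible seminorm estimates.
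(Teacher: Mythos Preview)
Your overall strategy matches the paper's: iterate \eqref{eq:rec1}--\eqref{eq:rec2} to reach the integrable regime, observe that the quantity $\alpha+r-2m+|s|$ is preserved, and finish with Lemma~\ref{lem:bsm2}. The gap is in the last step. You write that each integrable leaf ``fits into a class $\xi^\gamma S^{m^\sharp}_\ph I^{p-\frac12,l+\frac12}$ \dots\ using the composition rules of the parameter-dependent calculus.'' No such composition rules are available: the paper only proves $(\square_{h}+\xi^2)^{-1}\in S^{-2}_\ph I^{\frac12,\frac32}$ (Proposition~\ref{prop:parKG}), not that products of such elements with interspersed differential operators---i.e., general elements of $\cI^{m'',s''}$---land in any $S^{m^\sharp}I^{p,l}$ class. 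Composition in these $\xi$-dependent paired Lagrangian classes is precisely what the paper avoids establishing.

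The paper's workaround is an extra reduction you are missing. After reaching the integrable case and stripping off the $\xi$-independent $I^{r}$ factor, one commutes all the $L_k$ to the right via the resolvent identity
\[
[(\square_h+\xi^2)^{-1},L_k]=(\square_h+\xi^2)^{-1}[L_k,\square_h](\square_h+\xi^2)^{-1},
\]
which at each step trades a badly-placed $L_k$ for one shifted rightward plus a commutator term in $\cI^{m'+1,s''}$ with $|s''|=|s'|+1$. Iterating $N$ times puts the leaf in $(\square_h+\xi^2)^{-m}\Diff^{|s|}(Y)$ modulo a remainder in $\cI^{m+N,S}$ with $|S|=|s|+N$; the latter is bounded in $\xi$ with values in $I^{-2m+|s|-N}$ and is therefore negligible for large $N$. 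Now the $\Diff^{|s|}$ factor is $\xi$-independent and sits on the right, so it peels off trivially, and what remains is the single object $(\square_h+\xi^2)^{-m}$, which the proof of Proposition~\ref{prop:parKG} shows directly to lie in $S^{-2m}_\ph I^{\frac12,\frac32}$---this is the only point at which Lemma~\ref{lem:bsm2} is invoked. Your argument would go through if you either (a) inserted this commutator reduction, or (b) proved a genuine composition theorem for the classes $S^{m}I^{p,l}$; as written it appeals to (b) without justification.
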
 
 \begin{proof} Observe that iterating the inclusions \eqref{eq:rec1} or \eqref{eq:rec2} (applied initially to $b$) used to define the regularized integrals  yields always sums of terms that are each in $\xi^{\alpha'} I^{r'}(Y;\Lambda_0,\Lambda_1) \circ\cI^{m',s'}$ for some $\alpha', r',m',s'$ such that $\alpha'+r'-2m'+|s'|=\alpha+r-2m+|s|$ and $\Re \alpha'\notin 2\zz+1$. In consequence, we are reduced to proving
 \beq\label{eq:intij}
 \int_1^\infty  b(\xi) \varphi(\xi) d\xi  \in I^{\alpha+r-2m+|s|+1}(Y;\Lambda_0,\Lambda_1)
 \eeq
in the integrable case $\Re \alpha-2m<-1$ and the analogous statement for the integral on $\open{0,1}$ in the integrable range $\Re \alpha>-1$. We focus on \eqref{eq:intij}, the second case being analogous. 
 
 The $I^{r}(Y;\Lambda_0,\Lambda_1)$ part does not depend on $\xi$ and is handled easily, so we disregard it in the sequel.  We claim that without loss of generality we can assume that $s=(0,\dots,0,|s|)$, i.e., assume $b$ is of the form $b\in(\wo+\xi^2)^{-m}\Diff^{|s|}(Y)$. Indeed, if it is not the case then we can commute the differential operators $L_k$ to the right using the resolvent identity $[(\wo+\xi^2)^{-1},L_k]= (\wo+\xi^2)^{-1} [L_k,\wo] (\wo+\xi^2)^{-1}$ and reiterate as many times as wanted. Each iteration on an element of $\cI^{m',s'}$ produces a term with one of the differential operators shifted to the right,   plus a commutator term in $\cI^{m'+1,s''}$ with $|s''|=|s'|+1$. Thus by iterating as many times as needed, for any  $N\in \nn$ we get that $b\in (\wo+\xi^2)^{-m}\Diff^{|s|}(Y)$ modulo a term in $\cI^{m+N,S}$ where $|S|=|s|+N$. The latter term is bounded in $\xi$ with values in $I^{-2(m+N)+|s|+N}(Y;\Lambda_0,\Lambda_1)= I^{-2m+|s|-N}(Y;\Lambda_0,\Lambda_1)$, so the corresponding contribution to the integral  \eqref{eq:intij} is in    $I^{-2m+|s|-N}(Y;\Lambda_0,\Lambda_1)$ and is therefore inessential as $N$ can be taken as large as wanted. 
 
 Now if $b\in(\wo+\xi^2)^{-m}\Diff^{|s|}(Y)$, the  $\Diff^{|s|}(Y)$ part is easily taken into account, so we disregard it in the sequel and assume $b(\xi)=(\wo+\xi^2)^{-m}$. The proof of Proposition \ref{prop:parKG} generalizes directly and gives $b\in S^{-2m} I^{\12,\frac{3}{2}}(Y\times Y;\rr_+, {\Lambda}_0,{\Lambda}_1)$. Thus, by Lemma \ref{lem:bsm2} we obtain 
 \beq\label{tpxalp}
 \int_1^\infty  \xi^\alpha (\square_{h_0} +\xi^2 )^{-m} \varphi(\xi)  d\xi  \in  {I}^{\frac{\alpha}{2}-m + \12,\frac{\alpha}{2}-m+\frac{3}{2}}(Y\times Y; {\Lambda}_0,{\Lambda}_1)
\eeq
and the latter space is $I^{\alpha-2m+1}(Y;\Lambda_0,\Lambda_1)$ by definition. The  assertion concerning the case $\varphi\in L^2(\rr_+)$ also follows from  Lemma \ref{lem:bsm2}.
 \end{proof}
 
 \medskip
 
Complex powers of $\square_h$ arise as a particular case of this construction.

 \begin{definition}\label{def:cp}  For $\nu \notin \zz$, the (forward) \emph{complex powers} of   $\square_{h}$  are the paired Lagrangian distributions 
\beq
\bea
\big(\square_{h}\big)^{\nu} & \defeq  \frac{2}{\Gamma(1+\nu)\Gamma(-\nu)}   \fint_0^\infty  \xi^{2\nu+1} (\square_{h}+\xi^2)^{-1} d\xi \fantom \,\in  I^{2\nu}(Y\times Y; {\Lambda}_0,{\Lambda}_1)= {I}^{\nu - \12,\nu+\12}(Y\times Y; {\Lambda}_0,{\Lambda}_1).
\eea
\eeq
 \end{definition} 
 
In the case of ultra-static spacetimes a definition of forward complex powers of $\square_h$ was proposed by Joshi, who also proved that they are a family of paired Lagrangian distributions that enjoys good composition properties \cite{joshi}. As we will see, the definition in \cite{joshi} actually extends to general globally hyperbolic spacetimes and we show that it coincide with ours. 

The construction in \cite{joshi} first introduces an auxiliary space variable $r$ and considers the wave operator $\square_h+D_r^2$  on $Y\times \rr_r$. By translation invariance in $r$ the Schwartz kernel of the forward inverse is of the form $(\square_h+D_r^2)^{-1}(y,y',r,r')=K(y,y',r-r')$. Joshi defines  $\big(\square_{h}\big)^{\nu}$ as the family of operators whose Schwartz kernel is the analytic continuation from $\Re \nu\ll 0$ of 
\beq 
2 e^{i\pi (\nu+1)} \pi_* \big(\chi_+^{-2\nu-2}(r) K(y,y',r)\big),
\eeq
where $\pi_*$ is the push-forward of the projection $\pi:(y,y',r)\mapsto (y,y')$ and $z\mapsto \chi_+^z$ is the family of distributions defined by analytic continuation of $\chi_+^z(x)= \theta(x)x^z \Gamma(1+z)$ ($\theta(x)$ being the Heaviside step function). The push-forward is well-defined thanks to support properties of $K$, namely, for $(y,y',r)$ in a compact set, it is compactly supported in $r$.

 \begin{proposition} For $\Re \nu\ll 0$,
$
 \big(\square_{h}\big)^{\nu}(y,y')= 2 e^{i\pi (\nu+1)} \pi_* \big(\chi_+^{-2\nu-2}(r) K(y,y',r)\big).
 $
 \end{proposition}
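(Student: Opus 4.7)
The plan is to translate the identity to the dual picture under Fourier transform in the auxiliary variable $r$. The operator $\square_h+D_r^2$ is translation-invariant in $r$, and Fourier transforming its defining equation $(\square_h+D_r^2)K=\delta$ turns it into $(\square_h+\xi^2)\hat K(\xi)=\delta$ on $Y$. Since $K(y,y',r)$ is compactly supported in $r$ for $(y,y')$ in a compact set (the $r$-support is controlled by the causal distance in $(Y,h)$ via finite propagation applied to the parametric family $\square_h+\xi^2$), the Fourier transform $\hat K(y,y',\xi)$ is smooth in $\xi\in\rr$; the forward propagation in the time direction of $h$ identifies it with the retarded inverse $(\square_h+\xi^2)^{-1}$, and this inverse is even in $\xi$ since the operator depends on $\xi$ only through $\xi^2$.

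I would then apply Parseval's identity, together with the evenness of $\hat K(\xi)$, to obtain
\[
\pi_*\!\big(\chi_+^{-2\nu-2}(r)K(y,y',r)\big) = \tfrac{1}{2\pi}\int_{\rr}\widehat{\chi_+^{-2\nu-2}}(\xi)\,(\square_h+\xi^2)^{-1}\,d\xi,
\]
insert the Gel'fand--Shilov formula $\widehat{\chi_+^{-2\nu-2}}(\xi)\propto (\xi-i0)^{2\nu+1}$ and the standard decomposition $(\xi-i0)^{2\nu+1}=\xi_+^{2\nu+1}+e^{-i\pi(2\nu+1)}\xi_-^{2\nu+1}$. Exploiting once more the evenness of $(\square_h+\xi^2)^{-1}$, the change of variables $\xi\mapsto-\xi$ identifies the $\xi<0$ piece with the $\xi>0$ piece up to the phase $e^{-i\pi(2\nu+1)}$, so that the two contributions combine into $(1+e^{-i\pi(2\nu+1)})$ times a single half-line pairing. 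The pairing $\langle\xi_+^{2\nu+1},(\square_h+\xi^2)^{-1}\rangle$ on $\rr_+$ is by the very definition of $\xi_+^{2\nu+1}$ as an analytically continued homogeneous distribution the Hadamard finite part integral $\fint_0^\infty\xi^{2\nu+1}(\square_h+\xi^2)^{-1}\,d\xi$ from \sec{ss:fp}, as one checks by expanding $(\square_h+\xi^2)^{-1}=\sum_{k\geq 0}(-1)^k\xi^{2k}\square_h^{-k-1}$ near $\xi=0$ (modulo an integrable remainder) and matching the contribution of each pure power against \eqref{eq:regi}.

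Combining everything, the trigonometric simplification $e^{i\pi(2\nu+1)/2}(1+e^{-i\pi(2\nu+1)})=-2\sin(\pi\nu)$ together with the reflection identity $\Gamma(1+\nu)\Gamma(-\nu)=-\pi/\sin(\pi\nu)$ collapses the overall prefactor, in conjunction with the $2e^{i\pi(\nu+1)}$ that appears on the right-hand side of the statement, to precisely the constant $2/[\Gamma(1+\nu)\Gamma(-\nu)]$ of Definition~\ref{def:cp}. The main obstacle is the careful bookkeeping of the various phase factors arising from the normalization of $\chi_+^z$ and from the $\pm i0$ prescription in $(\xi\pm i0)^{2\nu+1}$, as well as the verification that the distributional pairing on $\rr_+$ coincides with the finite part integral at $\xi=0$; once these are in place the rest of the argument is mechanical.
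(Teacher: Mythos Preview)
Your proposal is correct and follows essentially the same route as the paper: Fourier transform in the auxiliary variable $r$ to replace $K$ by $(\square_h+\xi^2)^{-1}$, insert the Fourier transform of $\chi_+^{-2\nu-2}$ as a $(\xi\mp i0)^{2\nu+1}$ distribution, exploit evenness in $\xi$ to reduce to a half-line integral with a trigonometric prefactor, and then match constants via the reflection formula $\Gamma(1+\nu)\Gamma(-\nu)=-\pi/\sin(\pi\nu)$. The only cosmetic differences are that the paper writes $K$ directly as an inverse Fourier integral rather than invoking Parseval, and is terser about the identification of the distributional pairing with the finite part integral (where you spell out the expansion near $\xi=0$).
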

 \begin{proof} Writing $K(y,y',r)=\frac{1}{2\pi}\int_{-\infty}^\infty e^{i r \xi} (\square_h+\xi^2)^{-1}(y,y')d\xi$, the r.h.s.~equals
 $$
 \bea {}
&\frac{ e^{i\pi (\nu+1)}}{\pi} \pi_*\left( \int_{-\infty}^\infty e^{ir \xi } \chi_+^{-2\nu-2}(r) (\square_h+\xi^2)^{-1}(y,y')d\xi\right)\\
&\frac{e^{i\pi (\nu+1)}}{\pi} \lim_{\varepsilon\to 0^+}\int_{-\infty}^\infty \int_{-\infty}^\infty e^{-\varepsilon|r|} e^{ir \xi } \chi_+^{-2\nu-2}(r) (\square_h+\xi^2)^{-1}(y,y')d\xi dr\\
&= \frac{i e^{i\pi (\nu+1)}}{\pi} \int_{-\infty}^\infty (-\xi-i 0)^{1+2\nu} (\square_h+\xi^2)^{-1}(y,y') d\xi,
\eea
$$
which using evenness of $(\square_h+\xi^2)^{-1}(y,y')$ in $\xi$ can be rewritten as
$$
\frac{i e^{i\pi (\nu+1)} (1-e^{2\pi i\nu})}{\pi} \lim_{\delta\to 0^+}\int_{0}^\infty (-\xi-i \delta)^{1+2\nu}(\square_h+\xi^2)^{-1}(y,y') d\xi. 
 $$
 This is easily seen to coincide with the regularized integral \eqref{def:cp} using the identity  $\Gamma(1+\nu)\Gamma(-\nu)=-\frac{\pi}{\sin \pi \nu}$ and the characterizations of the family $(\xi+ i 0 )^{1+2\nu}$.
 \end{proof}\smallskip

 \subsection{Hankel multipliers and boundary asymptotics} 
 
Let us recall that $\cI^{m,s}$ are the $\xi$-dependent classes of paired Lagrangian distributions on $Y$ defined in \eqref{defIms} by composing $m$ times $(\wo+\xi^2)^{-1}$ and differential operators on $Y$ of order $\leq |s|$ in total. 

For $l\in \cc$, denote 
$
S^l(\rr_+^\circ):= \{ \varphi \in S^l(\rr_+) \st \supp \varphi \cap \{0\} = \emptyset \}
$.

We now set for $\ell\in\rr$,
\beq\label{eq:defIl}
\cI^{\ell} := S^0(\rr_+^\circ){\rm-span}  \bigcup_{\Re \alpha-2m+|s|+1=\ell, \Re \alpha-2m <-1} \xi^{\alpha} \cI^{m,s},
\eeq
where $\alpha\in \cc$,  $s\in \nn_0^m$ and $m\in\nn_0$.  In this way,
  Proposition \ref{prop:fint} gives  that $\fint_0^\infty b(\xi) d\xi \in I^\ell(Y;\Lambda_0,\Lambda_1)$ for $b\in \cI^{\ell}$ if $\Re\ell\notin \zz$.

 Let $\Re \nu,\Re \mu>0$. For $b\in \xi^\alpha \cI^{m,s}$, the \emph{Hankel multiplier} $\cH_\nu^{-1} b(\xi) \cH_\mu$ is well-defined through a convergent integral for instance if $\Re\alpha<2m-1$ and $b$ is supported away from $\xi=0$ (or more generally, if $\Re\alpha>-2-\Re (\nu+\mu)$), and its (operator-valued) Schwartz kernel is 
 $$
 B(x,x')=\int_0^\infty \cJ_\nu(x\xi)b(\xi)\cJ_\mu(x'\xi)d\xi. 
 $$
 More generally, the definition extends to other values of $\alpha$ if $B(x,x')$ is interpreted as an operator-valued distribution.   
 
 For $\ell\in \rr$, let $M_{\nu,\nu+1}^{\ell,1}(\rr_+)$ be the linear span of Hankel multipliers supported away from $\xi=0$ defined with either $\cJ_{\nu}$, $\cJ_{\nu+1}$, or both, i.e.
 $$
 M_{\nu,\nu+1}^{\ell,1}(\rr_+) := {\rm span} \left\{ \cH_{\nu+i} b(\xi)\cH_{\nu+j} \st b \in   \cI^\ell, \ i,j\in\{0,1\} \right\}.
 $$
 Then, by setting
 $$
 M_{\nu,\nu+1}^{\ell}(\rr_+):= \{  B_1 \circ \dots \circ B_k \st B_i \in M_{\nu,\nu+1}^{\ell_i,1}(\rr_+), \ \ell_1+\cdots + \ell_k = \ell, \ k\in\nn   \}, 
 $$
 we get a filtered algebra in the sense that
 $$
 M_{\nu,\nu+1}^{\ell} (\rr_+) \circ M_{\nu,\nu+1}^{\ell'}(\rr_+) \subset M_{\nu,\nu+1}^{\ell+ \ell'}(\rr_+).
 $$
 The reason for considering composition of Hankel multipliers with different parameters $\nu$ and $\nu+1$ is the following lemma:

 \begin{lemma}\label{lem:NN} For  $\ell\in \rr$,
 \beq\label{eq:psic}
 M^\ell_{\nu,\nu+1}(\rr_+)\circ    x \subset M^{\ell-1}_{\nu,\nu+1}(\rr_+) + x  \circ M^\ell_{\nu,\nu+1}(\rr_+), 
 \eeq
 where $x$ is understood as a  multiplication operator.
 \end{lemma}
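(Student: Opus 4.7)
The plan is to reduce to the base case of a single Hankel multiplier $B=\cH_{\nu+i}^{-1}b(\xi)\cH_{\nu+j}$ with $i,j\in\{0,1\}$ and $b\in\cI^\ell$, and then extend to arbitrary products $B_1\circ\cdots\circ B_k$ by induction on~$k$. The key computational input comes from differentiating $\cJ_\nu(x\xi)$ in~$\xi$ and using the recurrences~\eqref{ederiv}, which yield
\[
x\,\cJ_\nu(x\xi)=\bigl(\partial_\xi+\tfrac{\nu+1/2}{\xi}\bigr)\cJ_{\nu+1}(x\xi),\qquad x\,\cJ_{\nu+1}(x\xi)=\bigl(-\partial_\xi+\tfrac{\nu+1/2}{\xi}\bigr)\cJ_\nu(x\xi).
\]
At the operator level these become $\cH_\nu\circ x=D_0\cH_{\nu+1}$ and $\cH_{\nu+1}\circ x=D_1\cH_\nu$ with $D_j=(-1)^j\partial_\xi+(\nu+\tfrac12)\xi^{-1}$, and integrating by parts in $\xi$ (i.e.~dualising with $\cH_\mu^{-1}=\cH_\mu$) provides companion identities
\[
\cH_\nu^{-1}\partial_\xi=-\tfrac{\nu+1/2}{\xi}\cH_\nu^{-1}+x\,\cH_{\nu+1}^{-1},\qquad \cH_{\nu+1}^{-1}\partial_\xi=\tfrac{\nu+1/2}{\xi}\cH_{\nu+1}^{-1}-x\,\cH_\nu^{-1}
\]
that allow $\partial_\xi$ to be pushed past $\cH_{\nu+i}^{-1}$ at the cost of swapping $i\leftrightarrow 1-i$ and pulling out an $x$ factor on the left.

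For the base case I write $B\circ x=\cH_{\nu+i}^{-1}b(\xi)D_j\cH_{\nu+1-j}$, expand $b\partial_\xi=\partial_\xi b-b'$, and invoke the companion identity on $\cH_{\nu+i}^{-1}\partial_\xi$. A direct bookkeeping then produces three pieces: a term $(-1)^{i+j}x\cH_{\nu+1-i}^{-1}b\cH_{\nu+1-j}$ of the desired form $x\circ M_{\nu,\nu+1}^\ell$; a term $\bigl[1+(-1)^{i+j+1}\bigr](\nu+\tfrac12)\cH_{\nu+i}^{-1}\xi^{-1}b\cH_{\nu+1-j}$, which vanishes for $i=j$ and doubles for $i\neq j$; and a term $(-1)^{j+1}\cH_{\nu+i}^{-1}b'\cH_{\nu+1-j}$. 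The last two must be shown to lie in $M_{\nu,\nu+1}^{\ell-1}$: for $b\in\xi^\alpha\cI^{m,s}$ with $\Re\alpha-2m+|s|+1=\ell$, clearly $\xi^{-1}b\in\xi^{\alpha-1}\cI^{m,s}\subset\cI^{\ell-1}$, and for $b'$ the prefactor $\xi^\alpha$ differentiates to $\alpha\xi^{\alpha-1}$ while a resolvent factor differentiates via $\partial_\xi(\wo+\xi^2)^{-1}=-2\xi(\wo+\xi^2)^{-2}$, which shifts $(\alpha,m)\to(\alpha+1,m+1)$, thereby preserving $\Re\alpha-2m<-1$ and lowering $\Re\alpha-2m+|s|+1$ by one.

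For the general case $B=B_1\circ\cdots\circ B_k\in M_{\nu,\nu+1}^\ell$ with $B_r\in M_{\nu,\nu+1}^{\ell_r,1}$ and $\sum_r\ell_r=\ell$, I induct on~$k$. The base case applied to $B_k\circ x$ gives $B_k\circ x=C_k+x\,D_k$ with $C_k\in M_{\nu,\nu+1}^{\ell_k-1}$ and $D_k\in M_{\nu,\nu+1}^{\ell_k}$; writing $B\circ x=(B_1\circ\cdots\circ B_{k-1})\circ C_k+(B_1\circ\cdots\circ B_{k-1})\circ x\circ D_k$, the first summand lies in $M_{\nu,\nu+1}^{\ell-1}$ by the filtration of the algebra, while the second is handled by the inductive hypothesis on the $(k-1)$-fold product followed by composition with~$D_k$, yielding an element of $M_{\nu,\nu+1}^{\ell-1}+x\circ M_{\nu,\nu+1}^\ell$. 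The principal obstacle I expect is justifying the integration by parts in~$\xi$ underlying the companion identities: the boundary term at $\xi=0$ vanishes because $b$ is supported away from $\xi=0$, while vanishing at $\xi=\infty$ relies on the integrability condition $\Re\alpha-2m<-1$ built into the definition~\eqref{eq:defIl} of $\cI^\ell$ combined with the uniform boundedness of $\cJ_\nu$ at infinity recorded in~\eqref{eq:bounds}.
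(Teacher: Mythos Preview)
Your argument is correct and follows essentially the same route as the paper's proof: you derive the four commutation identities between a single Hankel multiplier $\cH_{\nu+i}^{-1}b\,\cH_{\nu+j}$ and multiplication by $x$ (the paper's display \eqref{eq:hankelx}), check that $\xi^{-1}$ and $\partial_\xi$ send $\cI^\ell$ to $\cI^{\ell-1}$, and then iterate over the factors in a product, exactly as the paper does. Two cosmetic points: in your companion identities the factor $\xi^{-1}$ belongs to the right of $\cH_{\nu+i}^{-1}$ rather than the left (it acts before the inverse Hankel transform), and when differentiating $b\in\cI^\ell$ you should also account for the $S^0(\rr_+^\circ)$ prefactor in the definition \eqref{eq:defIl}, whose derivative contributes an additional harmless $S^0(\rr_+^\circ)\xi^{\alpha-1}\cI^{m,s}$ term; neither point affects the validity of your proof.
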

 \begin{proof}  We first observe that $\xi^{-1}:S^0(\rr_+^\circ)\xi^\alpha \cI^{m,s}\to S^0(\rr_+^\circ)\xi^{\alpha-1}\cI^{m,s}$ and $$\p_\xi:S^0(\rr_+^\circ)\xi^\alpha \cI^{m,s}\to S^0(\rr_+^\circ)\xi^{\alpha-1}\cI^{m,s}+S^0(\rr_+^\circ)\xi^{\alpha+1}\cI^{m+1,(0,s)},$$ hence
 \beq\label{xipmaps}
\xi^{-1},\p_\xi : \cI^\ell\to \cI^{\ell-1}
\eeq
 for all $\ell\in \rr$.
 
 Now for any $b\in \cI^{m_i}$, using the two identities \eqref{eq:diag} with $\nu+1$ instead of $\nu$ (and with the role of $x$ and $\xi$ reversed) we obtain 
 \beq\label{eq:hankelx}
 \bea
 &\cH_\nu^{-1} b(\xi)\cH_{\nu} \circ  x=  \cH_\nu^{-1} (-\p_\xi b(\xi))  \cH_{\nu+1} + x \circ \cH_{\nu+1}^{-1}  b(\xi)\cH_{\nu+1},  \\
 &\cH_\nu^{-1} b(\xi) \cH_{\nu+1} \circ  x=  \cH_\nu^{-1} \big( (\p_\xi + (2\nu+1) \xi^{-1}) b(\xi) \big) \cH_{\nu} + x \circ \cH_{\nu+1}^{-1} (-b(\xi)) \cH_{\nu} \\
 \eea
 \eeq
 and similar identities for $\cH_{\nu+1}^{-1} b(\xi)\cH_{\nu} \circ  x$ and $\cH_{\nu+1}^{-1} b(\xi) \cH_{\nu+1} \circ  x$. In view of \eqref{xipmaps} these four identities give
 \beq\label{eq:Mmmi}
 M^{\ell_i,1}_{\nu,\nu+1}(\rr_+)\circ    x \subset M^{\ell_i-1,1}_{\nu,\nu+1}(\rr_+) + x  \circ M^{\ell_i,1}_{\nu,\nu+1}(\rr_+).
 \eeq
 Applying \eqref{eq:Mmmi} multiple times we conclude \eqref{eq:psic}.
  \end{proof}\medskip

 For all $u$ we define
 $$
 \gamma_+ u=2 \nu (x^{-\nu-\12}u)(x)|_{x=0}
 $$
 provided that the restriction to the boundary $\{x=0\}$ is well-defined, or more generally $\gamma_+ u =  2 \nu \fp_{x\to 0^+}(x^{-\nu-\12}u)(x)$ if the finite part is well-defined.

 \begin{proposition}\label{prop:key} Assume $\Re \nu>0$ and let $\ell \in \rr$ be such that  and $2\nu+\ell\notin \zz$.  Let  $B\in M^{\ell}_{\nu,\nu+1}(\rr_+)$. Then in the notation introduced in \eqref{eq:notI}, $\gamma_+ B \gamma_+^* \in  I^{\ell+2\nu}(Y;\Lambda_0,\Lambda_1)$. 
 \end{proposition}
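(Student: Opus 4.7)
The plan is to decompose $B = B_1 \circ \cdots \circ B_k$ with $B_i = \cH_{\nu+i_i}^{-1} b_i(\xi) \cH_{\nu+j_i} \in M^{\ell_i,1}_{\nu,\nu+1}$ and $\ell = \sum_i \ell_i$, and proceed by induction on $k$: the base case $k = 1$ is handled directly via Bessel asymptotics and Proposition~\ref{prop:fint}; the composition case reduces to the base case using the intertwining identities from \eqref{eq:diag} combined with Lemma~\ref{lem:NN}.

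For $k = 1$ with $B = \cH_{\nu+i}^{-1} b(\xi) \cH_{\nu+j}$, the Schwartz kernel is $B(x,x') = \int_0^\infty \cJ_{\nu+i}(x\xi)\, b(\xi)\, \cJ_{\nu+j}(x'\xi)\, d\xi$. Using $\cJ_\nu(z) = z^{\nu+\12}/(2^\nu \Gamma(1+\nu)) + O(z^{\nu+\frac52})$ together with the analogous expansion for $\cJ_{\nu+1}$ starting at order $z^{\nu+\frac32}$, the finite-part limit $\fp_{x\to 0^+}(x^{-\nu-\12}\cJ_{\nu+i}(x\xi))$ equals $\xi^{\nu+\12}/(2^\nu\Gamma(1+\nu))$ when $i=0$ and vanishes when $i=1$; and similarly for the $x'$ variable. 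Consequently $\gamma_+ B \gamma_+^*$ vanishes unless $i=j=0$, and in the matching case equals a nonzero multiple of $\fint_0^\infty \xi^{2\nu+1} b(\xi)\,d\xi$. Writing $b$ as a finite sum of terms in $\xi^\alpha \cI^{m,s}$ with $\Re\alpha - 2m + |s| + 1 = \ell$ and $\Re\alpha - 2m < -1$, Proposition~\ref{prop:fint} --- applicable precisely under the hypothesis $2\nu + \ell \notin \zz$ --- places this regularized integral in the claimed paired Lagrangian class.

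For $k \geq 2$, adjacent basic multipliers meet at an interior junction $\cH_{\nu+j_i}\cH_{\nu+i_{i+1}}^{-1}$: when $j_i = i_{i+1}$ this collapses to the identity and the pair combines into a single element of $M^{\ell_i+\ell_{i+1},1}_{\nu,\nu+1}$. For mismatched junctions I invoke the intertwining identity $\cH_\nu D_{\nu+1} = \xi \cH_{\nu+1}$ from \eqref{eq:diag}, with $D_{\nu+1} = \p_x + (\nu+\12)x^{-1}$, which trades the mismatch for a twisted derivative acting on the adjacent symbol. Via the formulas \eqref{eq:hankelx} this produces either matching junctions at the same total order (collapsing as above) or a term with an outer factor of $x$; Lemma~\ref{lem:NN} then absorbs the $x$-factor into $M^{\ell-1}_{\nu,\nu+1}$ at the cost of a unit order reduction. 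Iterating finitely many times, $B$ is expressed as a finite sum of single Hankel multipliers at order $\ell$ --- handled by the base case --- modulo a remainder in $M^{\ell'}_{\nu,\nu+1}$ for $\ell'$ as small as desired, which yields a contribution of arbitrarily low paired Lagrangian order and thus belongs trivially to $I^{\ell+2\nu}$.

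The main obstacle is propagating the symbol-class bookkeeping through the iterations. The operations $b \mapsto \p_\xi b$ and $b \mapsto \xi^{-1} b$ both shift the $\xi$-order by $-1$ while preserving the $\cI^\bullet$ structure (as used in the proof of Lemma~\ref{lem:NN}), but one must verify that the integrability condition $\Re\alpha - 2m < -1$ in the definition of $\cI^\ell$ is maintained at every step, so that Definition~\ref{def:sy} and Proposition~\ref{prop:fint} continue to apply. A related subtlety is justifying the commutation of the $\gamma_+$ finite-part limit with the $\xi$-integral when $b$ has polynomial growth; this is handled by the Hadamard finite-part framework of Section~\ref{ss:fp}, and the arithmetic hypothesis $2\nu + \ell \notin \zz$ is precisely what keeps the Gamma-function denominators arising in Example~\ref{ex:had} nonvanishing, so that the regularization is unambiguous at every step of the recursion.
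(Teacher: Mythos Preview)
Your base case ($k=1$) identifies the right answer but skips the central analytic difficulty: justifying that the finite-part limit $\fp_{x,x'\to 0^+} (xx')^{-\nu-\frac12}\int_0^\infty \cJ_\nu(x\xi)b(\xi)\cJ_\nu(x'\xi)\,d\xi$ can be exchanged with the $\xi$-integral to yield $\fint_0^\infty \xi^{2\nu+1}b(\xi)\,d\xi$. The integrand $\xi^{2\nu+1}b(\xi)$ is in general not integrable at infinity (only $\Re\alpha-2m<-1$ is assumed, not $\Re\alpha+2\Re\nu+1-2m<-1$), and the convergence for $x,x'>0$ relies on the oscillatory decay of $\cJ_\nu$, which disappears in the limit. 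Invoking ``the Hadamard finite-part framework of \S\ref{ss:fp}'' is not enough: that section defines regularized integrals but says nothing about commuting $\fp_{x\to 0^+}$ with $\int d\xi$. The paper handles this by inserting the Schl\"afli--Sonine integral representation for $J_\nu$, which replaces the Bessel oscillation by a Gaussian factor $e^{-x^2\xi^2/4t}$, and then splits $\xi^{2\nu+1}b(\xi)$ into its polyhomogeneous expansion plus an integrable remainder to extract the asymptotics in $x$ term by term.

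Your reduction argument for $k\ge 2$ has a more serious gap. The identities \eqref{eq:diag} and \eqref{eq:hankelx} describe how to commute multiplication by $x$ (or $\xi$) through a Hankel multiplier; they do \emph{not} convert a mismatched inner junction $\cH_{\nu+1}\cH_\nu^{-1}$ into the identity plus lower-order terms. Writing $\cH_{\nu+1}=\xi^{-1}\cH_\nu D_{\nu+1}$ as you suggest gives $\cH_{\nu+1}\cH_\nu^{-1}=\xi^{-1}\cH_\nu D_{\nu+1}\cH_\nu^{-1}$, but $D_{\nu+1}\cH_\nu^{-1}$ does not close up: it equals $\cH_{\nu-1}^{-1}\xi + x^{-1}\cH_\nu^{-1}$, introducing both a new index $\nu-1$ outside your algebra and a singular factor $x^{-1}$. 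There is no finite iteration of \eqref{eq:hankelx} that eliminates such junctions. The paper instead keeps the mismatched compositions as distributional kernels $k_{\nu,i}(\xi,\eta)=\int_0^\infty\cJ_{\nu+j}(x\xi)\cJ_{\nu+l}(x\eta)\,dx$, passes to spherical coordinates $\xi=r\omega$ in $\rr_+^k$, and exploits the homogeneity $k_{\nu,i}(\tau\xi,\tau\eta)=\tau^{-1}k_{\nu,i}(\xi,\eta)$ to reduce the multi-parameter integral to a single regularized integral in $r$ with values in paired Lagrangian distributions, to which a suitable extension of Proposition~\ref{prop:fint} applies.
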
 
 \begin{proof}  Let us first consider the particular case when $B=\cH_\nu^{-1}b(\xi) \cH_\nu$ for some $b\in \cI^\ell$.
  Away from $\{x=x'=0\}$ the Schwartz kernel of $B$ is the function
 \beq\label{eq:bxxp}
 B(x,x')=\int_0^\infty  \cJ_\nu(x\xi)   b(\xi) \cJ_\nu(x'\xi)\,d\xi,
 \eeq
where for each $x,x'\in \open{0,+\infty}$ the integral converges absolutely thanks to the bounds \eqref{eq:bounds} on $\cJ_\nu$ and the condition  $\Re \alpha-2m >-1$ in the definition  \eqref{eq:defIl} of the $\cI^\ell$ classes.  

We will show that
 \beq\label{eq:gBgs}
 \gamma_+ B \gamma_+^*  = \gamma_+(\gamma_+ B^*)^*=  (2\nu)^2\fp_{x,x'\to 0^+,x>x'} x^{-2\nu-1} B(x,x')
 \eeq
 exists. To get a more practical expression for $B(x,x):=\lim_{x'\to x, x<x'}B(x,x')$ we use the following  Schläfli--Sonine  integral representation of the Bessel function  \cite[\S6.2]{Watson1922}:
 $$
 J_\nu(z)=\frac{ z^{\nu}}{2^{\nu+1} \pi i } \int_{c-\infty i}^{c+\infty i} t^{-\nu-1} e^{t-\frac{z^2}{4t}}dt
 $$
 valid in particular for all $\Re \nu>0$, $z>0$ and $c>0$. By inserting it into \eqref{eq:bxxp} we obtain
 $$
 B(x,x)= - \frac{x^{2\nu+1}}{2^{2\nu+2} \pi^2}\int_0^\infty  \int_{c-\infty i}^{c+\infty i}\int_{c-\infty i}^{c+\infty i} \xi^{2\nu+1} e^{s-\frac{x^2\xi^2}{4s}} e^{t-\frac{x^2\xi^2}{4t}} b(\xi)(st)^{-\nu-1} \,ds \,dt \,d\xi 
 $$
 
For the sake of notation brevity assume $m=1$, $\alpha= 1$, $s=0$; the same arguments will then apply in the general case with obvious adjustments in the numerology for powers of $\xi$ at $\infty$ and extra composition with $\xi$-independent differential operators on $Y$. 
Let us write $e^{-x^2(s+t)/4st}=:1+x\psi_{s,t}(x)$ and  $\xi^{2\nu+1}b(\xi)=\sum_{j=0}^N  c_j \xi^{2\nu-1-j}+r_N(\xi)$ for some $c_j\in \cD'(Y\times Y)$, $N\in \nn_0$ sufficiently large and $r_N\in \xi^{-q} L^1(\clopen{1,\infty};\cD'(Y\times Y))$ with $q>0$. Then 
\beq\label{eq:keypart}
\bea
\int_0^\infty e^{-x^2\xi^2\frac{s+t}{4st}} \xi^{2\nu+1} b(\xi) d\xi
&=\sum_{j=0}^N  c_j  \int_0^\infty e^{-x^2\xi^2\frac{s+t}{4st}} \xi^{2\nu-1-j} d\xi  \fantom + \int_0^\infty  r_N(\xi)d\xi+ x\int_0^\infty \xi \psi_{s,t}(x\xi) r_N(\xi)d\xi \\
&=    \frac{1}{2 }\sum_{j=0}^N  c_j   \textstyle\left(\frac{s+t}{4st}\right)^{-\nu+j/2}  \Gamma(\nu-\textstyle\frac{j}{2}) x^{-2\nu+j} \fantom + \int_0^\infty  r_N(\xi)d\xi+ x\int_0^\infty \xi \psi_{s,t}(x\xi) r_N(\xi)d\xi.
\eea
\eeq

Let now $r_{N,\varphi}(\xi)$ be any $\cD'(Y\times Y)$ semi-norm of $r_N(\xi)$. Then as $x\to 0^+$,
$$
\bea
x\int_0^\infty \xi \psi_{s,t}(x\xi) r_{N,\varphi}(\xi)d\xi &= O(x)  + \int_1^{\frac{1}{x}} \xi \psi_{s,t}(x\xi) r_{N,\varphi}(\xi)d\xi + \int_{\frac{1}{x}}^\infty  \xi \psi_{s,t}(x\xi) r_{N,\varphi}(\xi)d\xi,
\eea
$$ 
where we claim that the two integrals are $O(x^q)$ uniformly in $s,t\in c+ i\rr$. In fact, since $\psi_{s,t}(x\xi)$ is  uniformly bounded for $x\xi  \leq 1$, hence in particular when $1 \leq \xi  \leq \frac{1}{x}$, estimating the first integral amounts to showing the bound
$$
x\int_1^\frac{1}{x}\xi^{1-q} (\xi^q r_{N,\varphi}(\xi)) d\xi = O(x^{q})    
$$
which follows from the first part of \cite[Lemma 2.1.9]{lesch} (and a brief inspection of the proof gives uniformity in $s,t$). Similarly, from the definition of   $\psi_{s,t}(\xi)$  we have $\psi_{s,t}(\xi)=O(\xi^{-1})$ uniformly in $s,t$ as $\xi\to \infty$, hence $x\xi\psi_{s,t}(x\xi)$ is uniformly bounded when $\xi\geq \frac{1}{x}$, and estimating  the second integral amounts to 
$$
\int_{\frac{1}{x}}^\infty\xi^{-q} (\xi^q r_{N,\varphi}(\xi)) d\xi = O(x^{q}).    
$$ 
 which follows from the second part of \cite[Lemma 2.1.9]{lesch}. 
 
 Putting all this information together we conclude that 
 $$
 \bea
  \fp_{x\to 0^+} x^{-2\nu-1} B(x,x) &=- \frac{1}{2^{2\nu+2} \pi^2}\fint_0^\infty  \int_{c-\infty i}^{c+\infty i}\int_{c-\infty i}^{c+\infty i} \xi^{2\nu+1}  b(\xi) e^{s} e^{t} (st)^{-\nu-1} \,ds \,dt \,d\xi \\
 &=  \frac{(2\pi)^2}{2^{2\nu+2} \pi^2 \Gamma(\nu+1)^2}\fint_0^\infty  \xi^{2\nu+1}  b(\xi)  d\xi \\ &=  \frac{2^{-2\nu}}{\Gamma(\nu+1)^2}\fint_0^\infty  \xi^{2\nu+1}  b(\xi)  d\xi, 
 \eea
 $$
 where we computed the integrals in $s$ and $t$ by deforming the contour and by using Hankel's formula for the gamma function.  By combining this with \eqref{eq:gBgs} we obtain
 \beq\label{eq:gbgis}
 \bea
 \gamma_+ B \gamma_+^*  & =(2\nu)^2\fp_{x\to 0^+} x^{-2\nu-1} B(x,x) \\ &= (2\nu)^2  \frac{2^{-2\nu}}{\Gamma(\nu+1)^2}\fint_0^\infty  \xi^{2\nu+1}  b(\xi)  d\xi=  \frac{2^{-2\nu+2} }{\Gamma(\nu)^2} \fint_0^\infty  \xi^{2\nu+1} b(\xi) d\xi,
 \eea
 \eeq
 which belongs to  $I^{\ell+2\nu}(Y;\Lambda_0,\Lambda_1)$ by Proposition \ref{prop:fint}.
 
 Let us now consider the general case $B\in M^{m}_{\nu,\nu+1}(\rr_+)$. Then the expression \eqref{eq:bxxp} with $x'\to x$ is replaced by linear combinations of terms of the form
 \beq\label{eq:linkop}
 \int_0^\infty \cdots \int_0^\infty \cJ_{\nu_1}(x\xi_1) \prod_{i=1}^k   b_i(\xi_i) k_{\nu,i}(\xi_i,\xi_{i+1}) \cJ_{\nu_{k}}(x\xi_k)   d\xi_i 
 \eeq
 where  $\nu_1,\nu_k\in\{\nu, \nu+1\}$ and for $i=1,\dots,k-1$, $k_{\nu,i}(\xi,\eta)$ are linear combination of distributions of the form
  \beq\label{eq:rtt}
  \int_0^\infty \cJ_{\nu + j}(x\xi)\cJ_{\nu +l}(x\eta) dx, \quad j,l\in\{0, 1\},
 \eeq
  and  $k_{\nu,k}(\xi,\eta)\equiv1$. By representing $\cJ_{\nu_1}(x\xi_1)$ and $\cJ_{\nu_k}(x\xi_k)$ in \eqref{eq:linkop} through Schläfli--Sonine integrals as before, we are reduced to studying the $x\to 0^+$ behavior of expressions of the form
  $$
  \bea
  \int_0^\infty \cdots \int_0^\infty e^{\frac{-x^2\xi^2_1}{4s}} e^{\frac{-x^2\xi^2_k}{4t}}  \xi_1^{\nu_1+\frac{1}{2}} \xi_k^{\nu_k+\frac{1}{2}  }\prod_{i=1}^k   b_i(\xi_i) k_{\nu,i}(\xi_i,\xi_{i+1})  d\xi_i.  
 \eea
  $$
  In spherical coordinates $\xi= r \omega$, using   $k_{\nu,i}(\tau\xi,\tau\eta)=\tau^{-1}k_{\nu,i}(\xi,\eta)$ we obtain that this equals
    $$
    \bea
    \int_0^\infty  \int_{\mathbb{S}^{k-1}\cap \rr_+^k} e^{\frac{-x^2r^2 \omega_1^2}{4s}} e^{\frac{-x^2r^2 \omega_k^2}{4t}} r^{\nu_1+\nu_2+1}  \omega_1^{\nu_1+\12} \omega_k^{\nu_k+\12} \prod_{i=1}^k   b_i( r\omega_i) k_{\nu,i}(\omega_i,\omega_{i+1})   dr d\omega  
   \eea
    $$
    where $d\omega$ is the volume form on $\mathbb{S}^{k-1}$. 
    
    We can now repeat the arguments following \eqref{eq:keypart}, with $r$ playing the role of $\xi$, and with the modification that $c_j$ and $r_N$ depend additionally  on $\omega$ and there is extra integration in $\omega$ over the compact set $\mathbb{S}^{k-1}\cap \rr_+^k$. Since $c_j$ and $r_N$ are smooth in $\omega$, the extra integration  causes no complication. The resulting constant term in the $x\to 0^+$  expansion is a regularized  integral
    \beq\label{eq:rri}
    \bea {} & \fint_0^\infty  \int_{\mathbb{S}^{k-1}\cap \rr_+^k} r^{\nu_1+\nu_k+1}  \omega_1^{\nu_1+\12} \omega_k^{\nu_k+\12}\prod_{i=1}^k   b_i( r\omega_i) k_{\nu,i}(\omega_i,\omega_{i+1})   dr d\omega  \\
    &=   \fint_0^\infty   r^{\nu_1+\nu_k+1}  b(r)dr, \\
   \eea
    \eeq
    where
    $$
   b(r)= \int_{\mathbb{S}^{k-1}\cap \rr_+^k}  \omega_1^{\nu_1+\12} \omega_k^{\nu_k+\12}\prod_{i=1}^k   b_i( r\omega_i) k_{\nu,i}(\omega_i,\omega_{i+1})   dr d\omega.
   $$ 

While it is not expected in general that $b(r)\in \cI^\ell$, we claim that  the assertion of Proposition \ref{prop:fint} still holds true and   the regularized integral \eqref{eq:rri}  is in    $I^{\ell+2\nu}(Y;\Lambda_0,\Lambda_1)$.  In fact, we can repeat the proofs  starting from Lemma \ref{lem:bsm}  onwards for $\omega$-dependent  symbols, keep track of the dependence on $\omega$ and integrate.  In particular, disregarding low-frequency cutoffs $\varphi(\xi)$ in the notation,  the integral
 $$
 \int_1^\infty  \xi^{\alpha} \big(\bra \eta\ket^{p}\bra \sigma\ket^{l} + \xi^2\big)^{\frac{m}{2}}  d\xi 
 $$
is  replaced by the more general expression
 \beq\label{genexp}
  \int_{I}    \int_{\mathbb{S}^{k-1}\cap \rr_+^k} \prod_{i=1}^k r^{\Re (\nu_1 + \nu_k)+1 } \omega_1^{\nu_1+\12} \omega_k^{\nu_k+\12} \big(\bra \eta\ket^{p}\bra \sigma\ket^{l} + r^2\omega^2_i\big)^{\frac{m_i}{2}} | k_{\nu,i}(\xi_i,\xi_{i+1})|  d\xi_i, 
  \eeq
  where  $I$ is either $\clopen{1,\infty}$   or $\opencl{0,1}$. Using that $k_{\nu,i}(\tau\xi,\tau\eta)=\tau^{-1}k_{\nu,i}(\xi,\eta)$ for all $\tau>0$ and $i=1,\dots,k-1$ and denoting $\alpha=\Re (\nu_1+\nu_k)+1$,  by the change of variable that replaces $r^2$ by $\bra \eta \ket^p \bra \sigma \ket^l r^2$   we obtain that  \eqref{genexp} equals 
  $$
  \bea {}
 &( \bra\eta\ket^{p}\bra \sigma\ket^{l})^{\frac{|m|+\alpha+1}{2}}    \int_{\bra\eta\ket^{-p/2}\bra \sigma\ket^{-l/2} I}      \int_{\mathbb{S}^{k-1}\cap \rr_+^k} \prod_{i=1}^k r^{\alpha } \omega_1^{\nu_1+\12} \omega_k^{\nu_k+\12} \big(1+ r^2\omega^2_i\big)^{\frac{m_i}{2}} | k_{\nu,i}(\xi_i,\xi_{i+1})|  d\xi_i \\
 &\leq ( \bra\eta\ket^{p}\bra \sigma\ket^{l})^{\frac{|m|+\alpha+1}{2}}    \fint_0^\infty      \int_{\mathbb{S}^{k-1}\cap \rr_+^k} \prod_{i=1}^k r^{\alpha } \omega_1^{\nu_1+\12} \omega_k^{\nu_k+\12} \big(1+ r^2\omega^2_i\big)^{\frac{m_i}{2}} | k_{\nu,i}(\xi_i,\xi_{i+1})|  d\xi_i  \\  
 & =  C_{\nu,m}  \bra\eta\ket^{\frac{p(|m|+\alpha+1)}{2}}  \bra \sigma\ket^{\frac{l(|m|+\alpha+1)}{2}}.    
\eea
  $$
  The remaining steps can be then repeated with  obvious notational adjustments.
 \end{proof}
 
 \medskip
 
 In the next lemma,  $H_{\nu,\b,+}^{1,m}(\rr_+\times Y)$ is the future-supported version of $H_{\nu,\b,\loc}^{1,m}(\rr_+\times Y)$.
 
  \begin{lemma}\label{prop:key2} Assume $\Re \nu>0$  and let $\ell\leq 0$ be such that  $\nu+\12+\ell\notin \zz$.  If $B\in M^{\ell}_{\nu,\nu+1}(\rr_+)$  then 
  $$\gamma_+ B: H_{\nu,\b,+}^{0,m}(\rr_+\times Y)\to   H_{\loc}^{m-\ell-\nu+1}(Y)$$ continuously for all $m\in \rr$.
  \end{lemma}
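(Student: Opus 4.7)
The plan is to adapt the proof of Proposition \ref{prop:key} by replacing the right factor $\gamma_+^*$ with a general $u \in H^{0,m}_{\nu,\b,+}(\rr_+\times Y)$. First, write $B=B_1\circ\cdots\circ B_k$ with $B_i\in M^{\ell_i,1}_{\nu,\nu+1}(\rr_+)$ and $\ell_1+\cdots+\ell_k=\ell$. Since $\cJ_{\nu+1}(x\xi)=O(x^{\nu+3/2})$ while $\gamma_+$ extracts the $x^{\nu+\frac12}$ coefficient, one has $\gamma_+ Bu=0$ unless the leftmost factor begins with $\cH_\nu^{-1}$, so we restrict to this case. The Schwartz kernel of $B$ then has the iterated structure displayed in Step 2 of the proof of Proposition \ref{prop:key}, and applying the asymptotic $\cJ_\nu(x\xi_1)\sim\frac{2^{-\nu}}{\Gamma(1+\nu)}(x\xi_1)^{\nu+\frac12}$ together with the $x\to 0^+$ expansion technique used to derive \eqref{eq:gbgis} yields
\begin{equation}\label{eq:gBuplan}
\gamma_+ Bu(y)=C_\nu \fint_0^\infty\!\!\cdots\!\!\fint_0^\infty \xi_1^{\nu+\frac12}\Big(\prod_{i=1}^k b_i(\xi_i)\,k_{\nu,i}(\xi_i,\xi_{i+1})\Big)\hat u_{\nu+\epsilon}(\xi_k,y)\,d\xi_1\cdots d\xi_k,
\end{equation}
where $\hat u_\mu=\cH_\mu u$ in the $x$-variable, $\epsilon\in\{0,1\}$, and the $k_{\nu,i}$ are the transition kernels produced by compositions of adjacent Hankel transforms as in the proof of Proposition \ref{prop:key}.

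Next, we use that $\cH_\mu$ is an isometry of $L^2(\rr_+)$ for every $\Re\mu>-1$ and that the intertwining identities \eqref{eq:diag} turn $x\partial_x$ into a first-order $\xi$-conormal operator, so that the conormal regularity of $u\in H^{0,m}_{\nu,\b,\loc,+}$ transfers into $L^2$-integrability in $\xi$ together with $m$ conormal derivatives in $\xi$ and $H^m_\loc$-regularity in $y$ for $\hat u_{\nu+\epsilon}$. In particular, for each relatively compact $U\Subset Y$, the map $\xi\mapsto \hat u_{\nu+\epsilon}(\xi,\cdot)$ defines an $L^2(\rr_+)$-function with values in $H^m(U)$, whose norm is bounded by a seminorm of $u$.

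Finally, we apply the $L^2$-version of Lemma \ref{lem:bsm} to the integral in \eqref{eq:gBuplan} with $\varphi=\hat u_{\nu+\epsilon}(\cdot,y)\in L^2(\rr_+)$, combined with the reduction scheme of Proposition \ref{prop:fint} based on the recursions \eqref{eq:rec1}--\eqref{eq:rec2}. The extra weight $\xi_1^{\nu+\frac12}$ shifts the effective symbol order by $\nu+\frac12$, and the $L^2$ Cauchy--Schwarz estimate yields a net Sobolev gain of order $1-\ell-\nu$ in $y$: $+1$ from the $\xi$-integration, $-\ell$ from the total symbol order of $b_1\cdots b_k$, and $-\nu$ from the weight $\xi_1^{\nu+\frac12}$. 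Adding the $+m$ regularity inherited from $\hat u_{\nu+\epsilon}$ produces the claimed bound $\gamma_+ Bu\in H^{m-\ell-\nu+1}_\loc(Y)$. The assumption $\ell\leq 0$ guarantees that the $\xi$-integrations (after regularization) respect the reduction in \eqref{eq:rec1}--\eqref{eq:rec2} without introducing growing remainders, while $\nu+\frac12+\ell\notin\zz$ rules out logarithmic corrections in the regularized integrals, just as in Proposition \ref{prop:fint}.

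The main obstacle is the same as in the final stage of the proof of Proposition \ref{prop:key}: controlling the iterated integral uniformly across all intermediate variables $\xi_1,\ldots,\xi_k$ when adjacent Hankel transforms $\cH_{\nu+a}^{-1}\cH_{\nu+b}$ with $a\neq b$ prevent direct simplification. The new difficulty relative to Proposition \ref{prop:key} is that one must track $L^2$-in-$\xi$ control at the outermost factor (carried by $\hat u_{\nu+\epsilon}$) throughout the entire reduction, rather than the formal symbol-type control provided by $\gamma_+^*$; this is what forces the $\frac34$-type loss in Lemma \ref{lem:bsm} to be invoked, and explains the appearance of $-\ell-\nu+1$ rather than the $-\ell-2\nu$ shift that would correspond to the $\gamma_+\bullet\gamma_+^*$ pairing.
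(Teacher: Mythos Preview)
Your approach is essentially the same as the paper's: Hankel-transform $u$ in the $x$-variable, then feed the result into the $L^2$ variant of Proposition~\ref{prop:fint} applied to $\xi^{\nu+\frac12} b(\xi)$. The paper also treats the single-factor case $B=\cH_\nu^{-1} b(\xi)\cH_\nu$ first and refers to the second half of the proof of Proposition~\ref{prop:key} for the general composition.

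There is however one point where your write-up glosses over a genuine step. You propose to apply Lemma~\ref{lem:bsm} with $\varphi=\hat u_{\nu+\epsilon}(\cdot,y)$; but this $\varphi$ depends on $y$, whereas the lemma (and Proposition~\ref{prop:fint}) require $\varphi$ to be a fixed scalar function of $\xi$, independent of the symbol variables. The $b_i(\xi_i)$ are operators in $y$, not scalar symbols, so the integral in your \eqref{eq:gBuplan} is not of the form covered by the lemma as stated. The paper handles this by first factoring out the $y$-regularity: since $D_y$ commutes with $\cH_\nu$, one has $\cH_\nu:H^{0,m}_{\nu,\b,+}\to\langle D_y\rangle^{-m}L^2$, and it then suffices to test on \emph{simple tensors} $\varphi\otimes v\in L^2(\rr_+)\otimes L^2(Y)$. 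For such a tensor the $\xi$- and $y$-dependence separate, and Proposition~\ref{prop:fint} (with its $L^2$ clause) yields an operator $A\in I^0(Y;\Lambda_0,\Lambda_1)$ acting on $v$, with seminorms bounded by $\|\varphi\|_{L^2(\rr_+)}$; the conclusion then follows from the mapping properties of paired Lagrangian distributions. This reduction to simple tensors is the missing ingredient in your argument.

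Two smaller remarks. First, the transfer of $x\partial_x$-conormal regularity via \eqref{eq:diag} that you invoke is not needed here: \eqref{eq:diag} concerns the twisted derivative $\partial_x+(\nu-\tfrac12)x^{-1}$, not $x\partial_x$, and in any case only the $D_y$-regularity (which commutes with $\cH_\nu$) is used. Second, your order-counting (``$+1$ from the $\xi$-integration'') invokes the $L^\infty$ shift while simultaneously appealing to the $L^2$ Cauchy--Schwarz bound, which gives the $+\tfrac34$ shift; the bookkeeping should be made consistent with whichever version of Proposition~\ref{prop:fint} you actually use.
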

  \begin{proof} For the sake of notational simplicity we  focus on the case when $B= \cH_\nu^{-1} b(\xi) \cH_\nu$ for some $b\in \cI^\ell$. Let $s=-\ell - \nu+3/2$. Since $D_y$ commutes with $\cH_\nu$ and $\cH_\nu\in B(L^2(\rr_+\times Y))$, 
  $$\cH_\nu :  H_{\nu,\b,+}^{0,m}(\rr_+\times Y)\to  \bra D_y\ket^{-m} L^2(\rr_+\times Y)
  $$ continuously, so it suffices to prove $\gamma_+ \cH_\nu^{-1} b(\xi) \bra D_y \ket^{-m}:L^2_{+}(\rr_+\times Y)\to H_{\loc}^{m+s}(Y)$ continuously or equivalently 
  \beq\label{eaea}
  \gamma_+ \cH_\nu^{-1} \bra D_y \ket^{m+s} b(\xi) \bra D_y \ket^{-m}:L^2_{+}(\rr_+\times Y)\to L^2_{\loc}(\rr_+\times Y).
  \eeq
  Dropping the `ft' and `loc' subscripts and disregarding localization functions for the sake of brevity, it suffices to prove an estimate on simple tensors $\varphi\otimes v \in L^2(\rr_+\times Y)$.  Proceeding as in the proof of  Proposition \ref{prop:key}, we obtain that for $\varphi\otimes v\in L^2(\rr_+\times Y)$,
  $$
  \bea
 &\gamma_+ \cH_\nu^{-1} \bra D_y \ket^{m+s} b(\xi) \bra D_y \ket^{-m}(\varphi\otimes v) \\ &= \frac{2^{-2\nu}}{\Gamma(\nu)}\left(\fint_0^\infty \xi^{\nu +\frac{1}{2}}\bra D_y \ket^{m+s}  b(\xi) \bra D_y \ket^{-m} \varphi(\xi) d\xi \right)v =: A v.
 \eea
  $$
By Proposition \ref{prop:fint}, $A\in I^{0}(Y;\Lambda_0,\Lambda_1)$, with seminorms bounded by $\norm{\varphi}_{L^2(\rr_+)}$. This then implies \eqref{eaea} by mapping properties of paired Lagrangian distributions.  The  case of general $B\in M^{\ell}_{\nu,\nu+1}(\rr_+)$ is proved by modifying the previous arguments analogously to the second part of the proof of Proposition \ref{prop:key}. 
  \end{proof}

 \section{Symbolic approximation of Dirichlet-to-Neumann map} \label{sec:DN}
 
 \subsection{Parametrix through Hankel multipliers}  Let us recall that on an asymptotically AdS spacetime $(X,g)$, the Klein--Gordon operator, rescaled as in \sec{s:preliminaries} by suitable powers of $x$, is given by
 \beq\label{eq:req}
 \bea
 P&= D_x^2 + \big(\nu^2-{\textstyle\frac{1}{4}}\big){x^{-2}}+c(x)\big(x\p_x +\textstyle \frac{d-1}{2}\big)+ \square_{h(x)} \\  &= N_{\nu} + c(x)\big(x\p_x +\textstyle \frac{d-1}{2}\big) +\square_{h(x)}
 \eea
 \eeq
in a collar neighborhood $\clopen{0,\varepsilon}\times \pX$ of the boundary.

 \begin{theorem}\label{thm:DN} Suppose $\nu>0$ (or $\Re\nu>0$ in the case when $\hat g$ is a product metric) and $\nu\notin\12\nn_0$. The (generalized) forward Dirichlet-to-Neumann map $\DN_{g}(\nu)$ of the Klein--Gordon operator $\square_g + \nu^2 - \tfrac{d^2}{4}$  is a paired Lagrangian distribution in   $I^{\nu-\12,\nu+\12}(\pX\times \pX; {\Lambda}_0,{\Lambda}_1)$ which satisfies
 \beq\label{eq:toprove1}
\Lambda_{g}(\nu)= \cons (\square_{h(0)})^{\nu} \mod I^{\nu-\frac{3}{2},\nu-\12}(\pX\times \pX; {\Lambda}_0,{\Lambda}_1).
 \eeq
 \end{theorem}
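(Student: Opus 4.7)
\noindent The plan is to compare $P$ with its normal operator $P_0 = N_\nu + \square_{h(0)}$, writing $P = P_0 + R$ with $R = c(x)(x\p_x + \tfrac{d-1}{2}) + (\square_{h(x)} - \square_{h(0)})$. After localization to a collar neighborhood of $\pX$, the inverse $P_0^{-1}$ realizes as the Hankel multiplier $\cH_\nu^{-1}(\xi^2 + \square_{h(0)})^{-1}\cH_\nu$ which, by Proposition~\ref{prop:parKG}, belongs to $M_{\nu,\nu+1}^{-1}(\rr_+)$. The idea is then to expand
\[
P^{-1} = \sum_{k=0}^{N-1} (-P_0^{-1} R)^k P_0^{-1} + (-P_0^{-1} R)^N P^{-1}
\]
in the spirit of a Neumann series and to show that each term contributes to progressively lower orders of paired Lagrangian distributions after sandwiching between $\gamma_+$ and $\gamma_+^*$.

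The key structural input is that every piece of $R$ carries, or can be made to carry, a factor of $x$. Indeed $\square_{h(x)} - \square_{h(0)} = x^2 L$ with $L \in \cf(\ei, \Diff^2(\pX))$ by the evenness hypothesis, while $c(x)(x\p_x + \tfrac{d-1}{2})$ can be rewritten via \eqref{eq:diag} applied with $\nu+1$ in place of $\nu$ as
\[
c(x)(x\p_x + \tfrac{d-1}{2}) = c(x)\, x\, \cH_\nu^{-1} \xi\, \cH_{\nu+1} + c(x)\,(\tfrac{d}{2} - 1 - \nu),
\]
with $c(x)$ further split as $c(0) + x^2 \tilde c(x)$ using the evenness of $\det h(x)$. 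Lemma~\ref{lem:NN} then allows one to push each factor of $x$ through $P_0^{-1}$ at the cost of lowering the Hankel multiplier order by one, modulo pieces with $x$ on the left that vanish at $\pX$. Iterating, $(-P_0^{-1} R)^k P_0^{-1}$ ends up in $M_{\nu,\nu+1}^{-1-k}(\rr_+)$ modulo such boundary-annihilated remainders, with compositions of Hankel transforms with Bessel parameters $\nu$ and $\nu+1$ kept track of consistently within the $M^\ell_{\nu,\nu+1}$ framework.

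Proposition~\ref{prop:key} then yields $\gamma_+ (-P_0^{-1}R)^k P_0^{-1} \gamma_+^* \in I^{2\nu - 1 - k}(\pX; \Lambda_0, \Lambda_1)$ in the notation \eqref{eq:notI}. The $k=0$ term is the leading one, lying in $I^{\nu - \12, \nu + \12}$, and by formula \eqref{eq:gbgis} combined with Definition~\ref{def:cp} it equals
\[
\gamma_+ P_0^{-1}\gamma_+^* = \frac{2^{-2\nu+2}}{\Gamma(\nu)^2} \fint_0^\infty \xi^{2\nu+1} (\xi^2 + \square_{h(0)})^{-1} d\xi = 2\nu\, \cons\, (\square_{h(0)})^\nu,
\]
so that the $(2\nu)^{-1}$ normalization in the definition of $\DN_g(\nu)$ produces exactly $\cons (\square_{h(0)})^\nu$. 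All $k \geq 1$ terms are absorbed into the error class $I^{\nu-\frac{3}{2},\nu-\12}$ or lower, and the Neumann remainder $(-P_0^{-1}R)^N P^{-1}\gamma_+^*$ is handled by combining Proposition~\ref{prop:key2} with the forward-inverse mapping of Theorem~\ref{thm:Pinv} and choosing $N$ sufficiently large.

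The hardest technical point is expected to be the handling of the $x\p_x$ summand of $R$: it does not come with an obvious external factor of $x$, which forces the twisted-derivative rewriting above and thereby introduces genuine compositions of Hankel transforms with distinct Bessel parameters. This makes the more involved second part of the proof of Proposition~\ref{prop:key}, the one dealing with nested Bessel integrals of the form \eqref{eq:rtt}, indispensable, and it also demands careful bookkeeping to ensure that all $x \cdot M^\ell$ contributions indeed drop out under the regularized finite-part extraction defining $\gamma_+$. A companion subtlety is verifying that the Neumann expansion respects forward support at every step, so that the forward inverse estimates of Theorem~\ref{thm:Pinv} can be invoked on the final remainder.
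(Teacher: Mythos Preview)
Your overall strategy—a Neumann-type expansion of $P^{-1}$ relative to the normal operator $P_0$, fitting each term into the Hankel-multiplier algebra $M^\ell_{\nu,\nu+1}$, and invoking Propositions~\ref{prop:key} and~\ref{prop:key2} for the boundary trace—is essentially the paper's approach. The paper Taylor-expands $P-P_0$ in $x$ first and builds a left parametrix $Q=\sum Q_{-l}$ by an explicit recursion, but unwinding that recursion reproduces your truncated Neumann series grouped by powers of $x$; structurally the two are the same. The low-frequency cutoff $\one-\chi(N_\nu)$ that you omit is a technical necessity (the classes $\cI^\ell$ are built from $S^0(\rr_+^\circ)$ and hence require support away from $\xi=0$), but that is easily repaired.

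There is, however, a genuine gap in your handling of the $x\partial_x$ contribution. Your rewriting $x\partial_x+\tfrac{d-1}{2}=x\,\cH_\nu^{-1}\xi\,\cH_{\nu+1}+(\tfrac{d}{2}-1-\nu)$ introduces the factor $\cH_\nu^{-1}\xi\,\cH_{\nu+1}$, whose symbol $b(\xi)=\xi$ has $\alpha=1$, $m=0$ and therefore violates the integrability condition $\Re\alpha-2m<-1$ in the definition~\eqref{eq:defIl} of $\cI^\ell$. Thus $\cH_\nu^{-1}\xi\,\cH_{\nu+1}$ does not lie in any $M^{\ell,1}_{\nu,\nu+1}$, and the compositions you form with it are not in the $M^\ell$ algebra; neither Lemma~\ref{lem:NN} nor Proposition~\ref{prop:key} applies to them as stated. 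Even after pushing the explicit $x$ through $P_0^{-1}$ on the left, the residual $\cH_{\nu+1}\cH_\nu^{-1}$ sitting between the pieces prevents you from merging symbols into a single $\cI^\ell$ element. The paper sidesteps this entirely: instead of a Hankel rewriting it uses the commutation identity derived from $x\partial_x P_0 = P_0(x\partial_x\pm 2)\mp 2\,\square_{h(0)}$, which gives
\[
P_0^{-1}\,x\partial_x = (x\partial_x\pm 2)\,P_0^{-1} \mp 2\,P_0^{-1}\square_{h(0)}P_0^{-1},
\]
so that $x\partial_x$ is commuted to the far \emph{left} and absorbed by $\gamma_+$ via $\gamma_+\circ x\partial_x=(\nu+\tfrac12)\gamma_+$, while the commutator term $P_0^{-1}\square_{h(0)}P_0^{-1}$ is an honest element of $M^{-1,1}_{\nu,\nu+1}$. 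This is precisely what produces $Q_{-j}\in(\Diff^j_\b(\rr_+)\otimes\one)\bigl(M^{-2-j}_{\nu,\nu+1}+x\,M^{-2}_{\nu,\nu+1}\bigr)$. A smaller slip: evenness of $h$ modulo $O(x^3)$ only gives $c(x)=c(0)+O(x)$, not $c(0)+x^2\tilde c(x)$.
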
 
 
 \begin{proof} The proof is divided into several steps. As previously, the forward Dirichlet  propagator of $P$ is  abbreviated by $P^{-1}$.

 \step{1} In the first step we attempt to construct a parametrix for $P^{-1}$ close to the boundary at leading order in $x$. Let 
 $$
 P_0=     D_x^2 + \big(\nu^2-{\textstyle\frac{1}{4}}\big){x^{-2}}+ \square_{h(0)},
 $$   
as a differential operator on  $\rr_+\times \p X$. Then  $P-P_0= c(x) (x\p_x + \frac{d-1}{2})  + \square_{h(
x)}- \square_{h(0)}$ close to the boundary. Let $\chi_1\in \cf(\rr_+)$ be such that $\chi_1\equiv 1$ for $x\leq \frac{\varepsilon}{4}$ and $\chi_1\equiv 0$ for $x\geq \frac{3\varepsilon}{8}$. For $N\in \nn$, by Taylor expanding in $x$ to order $N-1$ the function $c(x)$  and to order $N$ the coefficients of $\square_{h(x)}$, we obtain  
$$
\chi_1 P= P_0 + \sum_{k=1}^{N+1}  P_k, \quad P_k\defeq  x^{k-1} B_{k-1}   +  x^k L_k, 
$$
where $B_{k}=\frac{1}{k!} c^{(k)}(0) (x\p_x + \frac{d-1}{2}) \in  \Diff_\b^1(\rr_+)$, $L_k=\frac{1}{k!} (\p^k\,\square_{h(x)}/\p x^k)|_{x=0}  \in \Diff^2(Y)$ for $k=1,\dots,K$, and   $P_{N+1}\in \Diff_\b^1(\rr_+; \Diff^2(Y))$ accounts for the Taylor remainder and for terms supported away from $x=0$. 

Let $\chi\in C_{\rm c}^\infty(\rr_+)$ be such that $\chi\equiv 1$ in a neighborhood of $\xi=0$ and let $\chi(N_\nu)=\cH_\nu^{-1} \chi(\xi) \cH_\nu$. Let  
$
Q= \sum_{l=0}^{N+1} Q_{-l}
$, 
where $Q_{-l}$ is defined by the recursion
\beq\label{eq:recursion}
\begin{cases}
Q_{0}= P_0^{-1}(\one-\chi(N_\nu)), \\
Q_{-l}=  - \sum_{k=1}^l  Q_{-l+k}    P_k  P_0^{-1}(\one-\chi(N_\nu)), \ \ l=1,\dots, N,
\end{cases}
\eeq
with $P_0^{-1}$ being the forward Dirichlet propagator of $P_0$. Then,
\beq\label{eq:recursion2}
\bea
Q\chi_1 P&=  \sum_{j=0}^{N+1}\sum_{k=0}^{N+1}  Q_{-j}  P_k = \sum_{l=0}^{N+1}\sum_{k=0}^l    Q_{-l+k}  P_k   - R_{-N-2} \\
&= Q_0 P_0  +\sum_{l=1}^{N+1} \bigg( Q_{-l} P_0 -  \sum_{k=1}^l    Q_{-l+k}  P_k \bigg) -R_{-N-2} \\
&= \one - R_{-N-2}-\chi(N_\nu).
\eea
\eeq
where we have denoted
$$
 R_{-N-2}=-\sum_{l=N+2}^{2N+2}\sum_{k=N+1}^{l-N-1}    Q_{-l+k}  P_k.
$$
Let $\chi_2(x)=\chi_1(x/2)$. Then $\chi_1 P \chi_2= \chi_1 P$, thus by multiplying both sites of \eqref{eq:recursion2} by $\chi_2$ from the right we obtain
$$
Q \chi_1 P = \chi_2 -  (R_{-N-2}+\chi(N_\nu)) \chi_2.
$$
Thus, by uniqueness of the forward Dirichlet inverse we have $$
\chi_2P^{-1}  =Q \chi_1 + (R_{-N-2}+\chi(N_\nu)) \chi_2 P^{-1},$$ 
hence by Corollary \ref{cor:DN},  
$$
\bea
\Lambda_{g}(\nu) &= (2\nu)^{-1} \gamma_+ P^{-1} \gamma_+^*=  (2\nu)^{-1}\gamma_+ P^{-1} \chi_2  \gamma_+^*\\ &=  (2\nu)^{-1} \bigg(\sum_{j=0}^{N+1} \gamma_+ Q_{-j} \gamma^*_+  +  \gamma_+ (R_{-N-2}+\chi(N_\nu)) \chi_2 P^{-1} \gamma_+^*\bigg).
\eea
$$

Our aim now is  to show that the $Q_{-j}$ and $R_{-N-2}$ contributions get more regular for higher $j$ and $N$ respectively. In the recursive formula \eqref{eq:recursion} for $Q_{-j}$, we first commute to the left all  the $x\p_x$ terms (arising from the $P_k$), and then within the right factor we  commute all remaining powers of $x$ to the left. In the  first step (commuting polynomials in $x\p_x$) we use the operator identity $x^\lambda  x \p_x= (x \p_x-\lambda) x^\lambda$ and
$$
P_0^{-1} x\p_x = (x\p_x + 2 )P_0^{-1} - 2  P_0^{-1}  \square_{h(0)} P_0^{-1}, 
$$ 
which follows from the identity  $x \p_x P_0 = P_0 (x\p_x +2) - 2 \square_{h(0)}$. In the second step (commuting powers of $x$) we use that $P_0^{-1}\in M_{\nu,\nu+1}^{-2,1}(\rr_+)$ and $\square_{h(0)}\in M_{\nu,\nu+1}^{0,1}(\rr_+)$ and we apply Lemma  \ref{lem:NN} repeatedly.    As a result we conclude 
$$
Q_{-j}\in ( \Diff^j_\b(\rr_+) \otimes \one ) (M^{-2-j}_{\nu,\nu+1}(\rr_+) + x M^{-2}_{\nu,\nu+1}(\rr_+))
$$  
and similarly $R_{-j}\in ( \Diff^j_\b(\rr_+) \otimes \one ) (M^{-j}_{\nu,\nu+1}(\rr_+) + x M^{0}_{\nu,\nu+1})(\rr_+)$. 

Since $\gamma_+\circ  x \p_x=  (\nu +\12 ) \gamma_+$, the $\Diff_\b(\rr_+)$ factor contributes only with a non-zero multiplicative constant in the computation of  $\gamma_+ Q_{-j} \gamma_+^*$  and $\gamma_+ R_{-j}\chi_2 P^{-1}\gamma_+^*$ (and the $O(x)$ term does not contribute at all). In consequence,
$$
\gamma_+  Q_{-j} \gamma_+^* \in \gamma_+ M^{-2-j}_{\nu,\nu+1}(\rr_+) \gamma_+^* , \quad  \gamma_+ R_{-j} \chi_2 P^{-1}\gamma_+^* \in \gamma_+ M^{-j}_{\nu,\nu+1}(\rr_+) P^{-1} \gamma_+^*.
$$
By Proposition \ref{prop:key}, $\gamma_+  Q_{-j} \gamma_+^* \in  I^{\nu-\12-j,\nu+\12-j}(\pX\times \pX;\Lambda_0,\Lambda_1)$. Furthermore,
$$
\gamma_+ R_{-j} \chi_2 P^{-1}\gamma_+^*:  H_{\rm c}^{m}(\pX) \to H_{\rm loc}^{m-2\nu+j}(\pX)
$$
continuously, as follows from mapping properties of $P^{-1}\gamma_+^*$ (see  the proof of Proposition \ref{cor:DN}) and of $\gamma_+ R_{-j}$ (see Lemma \ref{prop:key2}). Similarly, $\gamma_+ \chi(N_\nu) \chi_2 P^{-1}\gamma_+^*$ is regularizing; the difference is that we cannot deduce mapping properties of $\gamma_+ \chi(N_\nu)$ directly from Lemma \ref{prop:key2} ($\chi(N_\nu)$ is not in $\cI^\ell$ because $\chi$ is not supported away from $\xi=0$), but all the arguments of the proof apply as everything is well behaved near $\xi=0$.   In summary, we obtain that
$$
\Lambda_{g}(\nu) = \sum_{j=0}^{N+1} \Lambda_{g,j}(\nu) \mod B\big(H_{\rm c}^{m}(\pX) , H_{\rm loc}^{m-2\nu+ j}(\pX)\big),
$$
where $\Lambda_{g,j}(\nu)=(2\nu)^{-1}\gamma_+ Q_{-j} \gamma_+^*\in  I^{\nu-\12-j,\nu+\12-j}(\pX\times \pX;\Lambda_0,\Lambda_1)$. In particular,
$$
\Lambda_{g,0}(\nu)= (2\nu)^{-1}\gamma_+  P_0^{-1} \gamma_+^*=  \cons (\square_{h(0)})^\nu
$$ by \eqref{eq:gbgis} and Definition \ref{def:cp}  (cf.~the formal computation in \sec{ss:productcase}). This proves \eqref{eq:toprove1}.\end{proof}

\subsection{Differences of Dirichlet-to-Neumann maps}

We now show that as in the asymptotically hyperbolic setting \cite{Joshi2000}, for $\nu$ outside of a discrete subset of parameters, $\Lambda_{g}(\nu)$ determines the Taylor series of the metric at $\pX=\{x=0\}$.

\begin{theorem}\label{T.Taylor}
Let $\nu > 0$. There exists a countable set $Z\subset \rr$ containing $\12\nn_0$ and such that for $\nu \notin Z$ the following holds true. For $j=1,2$, let $(X_j,g_j)$ be an  asymptotically anti-de Sitter spacetime satisfying Hypothesis~\ref{hyp:global}, and write  $g_j=x_j^{-2}(-dx_j^2+h_j)$ as in~\eqref{eq:fform}. Suppose that $Y:=\partial X_1=\partial X_2$, and the corresponding generalized Dirichlet-to-Neumann maps are equal: $\Lambda_{g_1}(\nu)=\Lambda_{g_2}(\nu)$. Then the Taylor series of the metric at the boundary coincides, that is,
	\begin{equation}\label{eq:Taylor}
	\partial_{x_1}^kh_1|_{x_1=0}=\partial_{x_2}^kh_2|_{x_2=0}\,, \ \  k\geq0\,.
	\end{equation}
	\end{theorem}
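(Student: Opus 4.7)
My plan is to exploit the parametrix of Theorem \ref{thm:DN} as an asymptotic series in powers of $x$ and then argue by induction on the Taylor order $k$. Writing
\[
\Lambda_g(\nu) = \sum_{j=0}^{N+1} \Lambda_{g,j}(\nu) + R_N, \quad \Lambda_{g,j}(\nu) = (2\nu)^{-1}\gamma_+ Q_{-j}\gamma_+^* \in I^{\nu-\12-j,\nu+\12-j}(\pX\times\pX;\Lambda_0,\Lambda_1),
\]
with $R_N : H^m_{\rm c}(\pX)\to H^{m-2\nu+N}_{\rm loc}(\pX)$, the key structural point is that $Q_{-j}$ is built by the recursion \eqref{eq:recursion} out of $P_0^{-1}$ and the operators $P_1,\ldots,P_j$; since $P_k = x^{k-1}B_{k-1} + x^k L_k$ with $L_k = \frac{1}{k!}(\partial_x^k \square_{h(x)})|_{x=0}$, the term $Q_{-j}$ depends only on the Taylor coefficients $h^{(0)}(0),\ldots,h^{(j)}(0)$. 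The base case $k=0$ is immediate from \eqref{eq:toprove1}: the principal symbol of $\Lambda_{g,0}(\nu)$ at $\Lambda_0\setminus\Lambda_1$ is a non-zero multiple of $|\eta|_{h(0)}^{2\nu}$, so equality of Dirichlet-to-Neumann maps (with the boundary defining function normalized by \eqref{eq:specialbdf}) forces $h_1(0)=h_2(0)$.

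For the inductive step, assume $h_1^{(j)}(0) = h_2^{(j)}(0)$ for all $j<k$. Then the operators $P_{1,j}$ and $P_{2,j}$ coincide for $j \leq k-1$, hence by \eqref{eq:recursion} we have $Q_{1,-l} = Q_{2,-l}$ and therefore $\Lambda_{g_1,l}(\nu) = \Lambda_{g_2,l}(\nu)$ for $l \leq k-1$. The hypothesis $\Lambda_{g_1}(\nu) = \Lambda_{g_2}(\nu)$, together with the smoothing control on $R_N$ for $N$ arbitrarily large, then forces
\[
\Lambda_{g_1,k}(\nu) - \Lambda_{g_2,k}(\nu) \in I^{\nu-\12-k-1,\nu+\12-k-1}(\pX\times\pX;\Lambda_0,\Lambda_1),
\]
so its principal symbol at $\Lambda_0$ must vanish. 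Among $P_1,\ldots,P_k$, only $L_k$ depends on $h^{(k)}(0)$, and $\sigma_2(L_k)(y,\eta)$ is a quadratic form in $\eta$ whose coefficients depend linearly on $h^{(k)}(0)$ modulo data already determined at lower orders.

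Unpacking the recursion and using the boundary-trace computation of Proposition \ref{prop:key}, the contribution of $h^{(k)}(0)$ to the principal symbol of $\Lambda_{g,k}(\nu)$ at $\Lambda_0$ takes the form
\[
c_\nu(k)\,\sigma_2(L_k)(y,\eta)\,|\eta|_{h(0)}^{2\nu-2k-2} \;+\; (\text{terms depending only on } h^{(j)}(0),\; j<k),
\]
where $c_\nu(k)$ is an explicit rational expression in $\Gamma$-values of $\nu$, arising from the finite-part integrals of \sec{ss:reg} (e.g.\ Example \ref{ex:had}) and from the commutator rearrangements following \eqref{eq:recursion2}. Defining $Z$ to be the union of $\12\nn_0$ with the discrete sets $\{\nu>0 : c_\nu(k)=0\}$ over $k\in\nn$ yields a countable exceptional set, and for $\nu\notin Z$ the vanishing of the principal symbol forces $\sigma_2(L_{k,1}) = \sigma_2(L_{k,2})$, which is precisely $h_1^{(k)}(0) = h_2^{(k)}(0)$. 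This closes the induction and establishes \eqref{eq:Taylor}; the real-analytic conclusion follows since real analytic metrics agreeing to all orders at $Y$ are isometric in a one-sided neighborhood.

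The main obstacle lies in the final paragraph: extracting the precise form of $c_\nu(k)$ from the recursion and verifying that it is a non-trivial meromorphic function of $\nu$, so that its zero set is in fact discrete. The bookkeeping parallels that of Joshi--S\'a Barreto \cite{Joshi2000} in the asymptotically hyperbolic setting, but here one must track the coupling between Hankel multipliers of orders $\nu$ and $\nu+1$ produced by Lemma \ref{lem:NN} and confirm that the resulting combination of $\Gamma$-factors does not identically vanish in $\nu$.
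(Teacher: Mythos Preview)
Your approach is the same as the paper's: induct on the Taylor order using the parametrix expansion from Theorem~\ref{thm:DN}, isolate the leading new contribution at each step, and defer the non-vanishing of the coefficient $c_\nu(k)$ to the Riemannian computation in \cite{Joshi2000}.

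One imprecision is worth flagging. You assert that among $P_1,\ldots,P_k$ only $L_k$ depends on $h^{(k)}(0)$, but $B_{k-1}=\tfrac{1}{(k-1)!}c^{(k-1)}(0)\big(x\partial_x+\tfrac{d-1}{2}\big)$ (appearing in $P_k=x^{k-1}B_{k-1}+x^kL_k$) does as well, since $c(x)=\tfrac12 x^{-1}\partial_x\log|\det h(x)|$ already involves one $x$-derivative of $h$. The paper deals with this by writing out $Q_{1,-N}-Q_{2,-N}$ explicitly and checking that the $B$-contribution $P_0^{-1}x^{N-1}(B_{2,N-1}-B_{1,N-1})P_0^{-1}$ is lower order relative to the $L$-contribution $P_0^{-1}x^NP_0^{-1}(L_{2,N}-L_{1,N})$; it then reduces to the scalar object $E_N=\gamma_+P_0^{-1}x^NP_0^{-1}\gamma_+^*$, whose principal symbol is obtained via the Hankel commutation identities~\eqref{eq:hankelx} and shown to vanish only for $\nu$ in a discrete set by the same polynomial argument as in \cite{Joshi2000}. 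This is precisely the computation you flag as the remaining obstacle, and it closes the induction.
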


\begin{proof}
Suppose for  that $h_1(x)$ and  $h_2(x)$ are two metrics such that for some $N\in\nn$, $h_1^{(k)}(0)=h_2^{(k)}(0)$ for $k=1,\dots,N-1$ and $h_1^{(N)}(0)\neq h_2^{(N)}(0)$.   Let us denote the corresponding asymptotically AdS metrics, differential operators and related objects entering the proof of Theorem \ref{thm:DN} with an extra subscript $i=1,2$. We will show that  $\Lambda_{g_1,j}(\nu)=\Lambda_{g_2,j}(\nu)$ for $j=1,\dots,N-1$ but  $\Lambda_{g_1,N}(\nu)\neq\Lambda_{g_2,N}(\nu)$. 

We have  $B_{1,k-1}=B_{2,k-1}$ for $k=1,\dots,N-1$   and $B_{1,N-1}\neq B_{2,N-1}$  (note that $c(x)= \12 x^{-1}\partial_x (\log |{\det h(x)}|)$ involves  one derivative of $h(x)$ already). Similarly, $L_{1,k}=L_{2,k}$ for $k=1,\dots,N-1$  and $L_{1,N}\neq L_{2,N}$, hence $P_{1,k}=P_{2,k}$ for $k=1,\dots,N-1$  and $P_{1,N}\neq P_{2,N}$.
 From the definition  \eqref{eq:recursion} of $Q_{i,-l}$ we conclude that $Q_{1,-l}=Q_{2,-l}$ for $l=1,\dots,N-1$, and
 $$
 \bea
 Q_{1,-N}-Q_{2,-N} &= Q_{0} (P_{2,N}-P_{1,N})P_0^{-1}\\
 &= P_0^{-1} x^{N} (L_{2,N}- L_{1,N}) P_0^{-1} + P_0^{-1} x^{N-1} (B_{2,N}- B_{1,N}) P_0^{-1}.\\
 &=  P_0^{-1} x^{N} P_0^{-1} (L_{2,N}- L_{1,N}) + P_0^{-1} x^{N}  P_0^{-1}[ \square_{h(0)}, L_{2,N}- L_{1,N}] P_0^{-1} \fantom + P_0^{-1} x^{N-1} (B_{2,N}- B_{1,N}) P_0^{-1}
 \eea
 $$
 Here the second and third summand contribute with a lower order term to the difference  $\Lambda_{g_1}(\nu)-\Lambda_{g_2}(\nu)$, in comparison to the first summand. Since  $L_{2,N}- L_{1,N}\neq 0$ commutes with $\gamma_+$, it suffices to inspect the expression 
 $$
E_N\defeq \gamma_+  P_0^{-1} x^{N}  P_0^{-1} \gamma_+^*.
 $$
As before, we commute powers of $x$ to the left. Concretely, since $P_0^{-1}= \cH_\nu^{-1}  p_0^{-1}(\xi)\cH_\nu$ with  $p_0^{-1}(\xi)=(\xi^2+ \square_{h(0)})^{-1}$,  iterated use of the identities \eqref{eq:hankelx} gives 
$$  
E_N =
  \gamma_+  \cH_\nu^{-1}  \big( ( - (\p_\xi + (2\nu+1) \xi^{-1})\p_\xi  )^{K}p_0^{-1}(\xi)\big)p_0^{-1}(\xi)   \cH_\nu    \gamma_+^* 
 $$ 
for $N=2K$ and
$$  
E_N =
   \gamma_+  \cH_\nu^{-1}  \big( -\p_\xi( - (\p_\xi + (2\nu+1) \xi^{-1})\p_\xi  )^{K} p_0^{-1}(\xi)\big) \cH_{\nu+1} \cH_\nu^{-1} p_0^{-1}(\xi)   \cH_\nu   \gamma_+^*, 
$$
for $N=2K+1$.
The principal symbol of $E_N$ can be computed  following the steps in the proof of Proposition \ref{prop:key} and it can vanish only if a certain polynomial of $\nu$ vanishes, which happens at most for $\nu$ in a discrete set (note that the $\nu$-dependent constants are the same as  in the Riemannian analogue of the problem, in which case precise computations were given in  \cite{Joshi2000}).
\end{proof}

\medskip

	A first direct corollary for the bulk metrics holds true in the analytic case.
	
 \begin{corollary}\label{T.analytic} 
 	Under the hypotheses of Theorem~\ref{T.Taylor}, if $(X_i, g_i)$ and~$x_i$ are real analytic and $\p X_i$ is simply connected for $i=1,2$, then $(X_1,g_1)$ and $(X_2,g_2)$ are isometric. The same assertion holds true if $(X_i,g_i)$ solves Einstein equations with negative cosmological constant or more generally, satisfies the averaged null condition.     
 \end{corollary}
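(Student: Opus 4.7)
The plan is to first upgrade the Taylor-series coincidence provided by Theorem~\ref{T.Taylor} to an honest equality of metrics in a collar of the boundary, and then to globalize the resulting isometry by analytic continuation.

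Under the real-analyticity hypothesis, the families $x_i \mapsto h_i(x_i,y)$ are real-analytic in both variables near $x_i=0$. The equality of Taylor coefficients \eqref{eq:Taylor} furnished by Theorem~\ref{T.Taylor} (for $\nu$ outside the exceptional countable set $Z$) therefore upgrades to the pointwise identity $h_1(x,y)=h_2(x,y)$ on a common collar $\clopen{0,\varepsilon}\times Y$. Writing $g_i=x_i^{-2}(-dx_i^2+h_i(x_i,\cdot))$ in the associated normal coordinates, this identity yields a real-analytic isometry $\Phi\colon U_1 \to U_2$ between collar neighborhoods $U_i$ of $\partial X_i$, restricting to $\mathrm{id}_Y$ on the common boundary.

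To globalize, I would invoke the classical analytic-continuation theorem for isometries between connected real-analytic pseudo-Riemannian manifolds (Kobayashi--Nomizu, Vol.~I, Ch.~VI): a local isometry extends analytically along any path, and the extension is path-independent provided the ambient manifold is simply connected. The global-hyperbolicity assumption with compact Cauchy surface with boundary forces $X_i$ to be diffeomorphic to $\Sigma_i\times\rr$ where $\Sigma_i$ is a Cauchy surface of $(X_i,g_i)$, cf.~\cite{sanchez}; moreover the product collar at $\partial X_i$, together with simple connectedness of $\partial X_i$, yields simple connectedness of $X_i$ under the mild auxiliary assumption that $\Sigma_i$ retracts onto $\partial\Sigma_i$ (which is natural in the present asymptotically AdS setting). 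Analytic continuation of $\Phi$ therefore defines a global isometry $X_1\to X_2$, as was to be shown. This is the Lorentzian analogue of the Riemannian strategy of Lassas--Taylor--Uhlmann \cite{Lassas2003}.

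For the Einstein case (or, more generally, under the averaged null condition), the metric $g_i$ is real-analytic in harmonic/wave coordinates by the classical elliptic/hyperbolic regularity theory for the (reduced) Einstein equations, and the special boundary defining function characterized by $\hat g_i^{-1}(dx_i,dx_i)=-1$ inherits analyticity by solving a (real-analytic) Hamilton--Jacobi equation near $\partial X_i$; this reduces the Einstein case to the analytic case already handled. The main obstacle is the analytic-continuation step in Lorentzian signature: the exponential map is not in general injective (conjugate points) and one must carry out the extension along curves while ensuring that continuations remain inside $X_2$ and that monodromy is trivial. The combination of simple connectedness of $\partial X_i$ with the global-hyperbolic causal structure is precisely the tool needed to control both issues, and it is here that the hypotheses must be combined with care.
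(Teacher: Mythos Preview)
Your first part (upgrading the Taylor-series coincidence to an analytic isometry on a collar) matches the paper. The globalization step is close in spirit, but you introduce an extraneous hypothesis (``$\Sigma_i$ retracts onto $\partial\Sigma_i$'') that is neither assumed nor needed. The paper instead invokes the Lee--Uhlmann argument, which requires only the relative condition $\pi_1(X_i,\partial X_i)=0$ (every loop in $X_i$ based at a boundary point is homotopic to one lying in $\partial X_i$); this is what simple connectedness of $\partial X_i$ buys directly, without any retract assumption on the Cauchy surface.

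The genuine gap is in your treatment of the Einstein/averaged-null-condition variant. You claim that the Lorentzian Einstein equations force real analyticity of $g_i$ via ``hyperbolic regularity'', and then reduce to the analytic case. This is false: in Lorentzian signature the reduced Einstein equations are hyperbolic, and hyperbolic regularity does \emph{not} yield analyticity---solutions with smooth but non-analytic Cauchy data are smooth but non-analytic. (The DeTurck--Kazdan analyticity result is strictly Riemannian/elliptic.) In the paper, the Einstein hypothesis plays a completely different role: real analyticity is still assumed, and the Einstein equation (or more generally the averaged null energy condition) is used \emph{instead of} the simply-connected-boundary hypothesis, via the topological censorship theorem of Galloway--Schleich--Witt--Woolgar, to guarantee the topological condition $\pi_1(X_i,\partial X_i)=0$ needed for the Lee--Uhlmann analytic continuation. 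So the second sentence of the corollary replaces a topological hypothesis, not the analyticity hypothesis.
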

 \begin{proof}
  	Equation~\eqref{eq:Taylor} ensures that there are neighborhoods of the boundary $X_i'\subset X_i$ with analytic charts $\Phi_i:\clopen{0,\epsilon}\times Y\to X_i'$  for some $\epsilon>0$, such that $\Phi_1^{-1}\circ\Phi_2:(X_1',g_1)\to (X_2',g_2)$ is an analytic isometry. If $X_i$ is simply connected then by analyticity we can use the  arguments of Lee--Uhlmann \cite{Lee1989}  to  show that this extends to a global isometry $(X_1,g_1)\to (X_2,g_2)$. This requires the topological assumption that $\pi_1(X_i,\p X_i)=0$, i.e.~every closed path in $X_i$ with base point in $\p X_i$ is homotopic to a path that lies entirely in $\p X_i$. This is automatically verified if the boundary is simply connected.   In the second variant it follows  from a theorem of Galloway--Schleich--Witt--Woolgar \cite{galloway} using our global hyperbolicity assumption. 
  \end{proof}
 
 \begin{remark}\label{T.variant} Another variant can be obtained if instead of the assumption that $\pX_i$ is simply connected, $(X_i,g_i)$ is foliated by complete analytic Cauchy surfaces of dimension $d\geq 3$. 
 
 In this situation we can argue as follows. Using \cite{sanchez} we can assume without loss of generality that $\hat g_i=\beta_i d\tau^2 - k_{i}(\tau)$, with $\beta$ real analytic and $k_{i}(\tau)$ a family of real analytic Riemannian metrics. By analytic continuation, $\beta_1=\beta_2$. Next,  choose  neighborhoods $X_i''$ of the boundary strictly contained in $X_i'$, $i=1,2$. There exists neighborhoods of $\p X_1''\simeq \p X_2''$ in $X_1$ and $X_2$ respectively that are isometric, in particular the Cauchy surface $(\Sigma_{1,\tau}, k_1(\tau))$ is isometric to $(\Sigma_{2,\tau}, k_2(\tau))$ in a neighborhood of $\p \Sigma_{1,\tau}\simeq \p \Sigma_{2,\tau}$ for each $\tau\in \rr$. Therefore for each $\tau \in \rr$ the Riemannian Dirichlet-to-Neumann maps associated to the metric $k_i(\tau)$ at $\p X_i''$ are equal. By a result of Lassas--Taylor--Uhlmann \cite{Lassas2003} we conclude that there is a global isometry $(\Sigma_{1,\tau}, k_1(\tau))\to (\Sigma_{2,\tau}, k_2(\tau))$. Furthermore, the isometry depends smoothly on~$\tau$ because, in the construction of~\cite{Lassas2003}, $\tau$ only appears through the Green's function $G_{i,\tau}$ of the Laplacian of $(\Sigma_{i,\tau}, k_i(\tau))$, which itself a smooth function of~$\tau$ (with values in a suitable Sobolev space of negative index).
 \end{remark}

\section{Einstein asymptotically anti-de Sitter metrics} \label{sec:einstein}
 
\subsection{Inverse problem for Einstein asumptotically AdS metrics}
  In this final section we consider asymptotically anti-de Sitter Einstein metrics, i.e., which satisfy
$$
  \mathrm{Ric}_g+(n-1) g=0.
  $$
  Note that if the boundary dimension $d=n-1$ is odd, Einstein metrics are typically not smooth up to the boundary due to the presence of logarithmic terms --- smooth Einstein metrics exist in all dimensions, though. For the sake of simplicity we  restrict ourselves to the smooth case.
  
  In this setting, an inverse result can be obtained by combining Theorem \ref{T.Taylor} with a unique continuation theorem due to Holzegel--Shao \cite{Holzegel2023}. We first recall the main assumption used in the latter.
  
  \begin{definition}[{Chatzikaleas--Shao \cite{CS22}}] \label{nullco} An asymptotically anti-de Sitter spacetime $(X,g)$  satisfies the \emph{generalized null convexity criterion} if there exists $\eta\in C^4(X)$ such that $\eta>0$ on $X$, $\eta=0$ on $\p X$ and the bilinear form 
  $$
  \eta^{-1} D^2_g \eta + P_g
  $$
 is uniformly positive definite on all null covectors, where $D^2_g$ is the Hessian and $P_g=\frac{1}{n-2}({\rm Ric}_g - \frac{1}{2(n-1)}R_g g)$ the Schouten tensor of $g$.
  \end{definition}
   
  \begin{theorem}\label{E.Einstein}  
  For $i=1,2$,	suppose that $(X_i,g_i)$ is an Einstein asymptotically anti-de Sitter spacetimes satisfying the generalized null convexity criterion in a neighborhood of~$Y$ in~$X_i$. Then, under the hypotheses of Theorem~\ref{T.Taylor}, there exists a neighborhood $X_i'$ of~$Y$ in~$X_i$ such that $(X_1',g_1)$ and $(X_2',g_2)$ are isometric. 
  \end{theorem}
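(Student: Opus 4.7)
The plan proceeds in three stages: first, establish that the boundary Taylor series of $g_1$ and $g_2$ match; second, use the Einstein equations together with a unique continuation theorem from the conformal boundary to propagate this agreement into a neighborhood of $Y$; and third, appeal to a global inverse result along null geodesics to obtain the full isometry when no conjugate points are present.

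First, I would apply Theorem~\ref{T.Taylor}, which, since $\nu$ is outside the discrete exceptional set, guarantees that in compatible special boundary defining functions $x_1,x_2$ the Taylor expansions of the boundary metric families $h_1(x_1,y,dy)$ and $h_2(x_2,y,dy)$ at $x_i=0$ coincide to all orders on $Y$. For Einstein asymptotically AdS metrics the full formal expansion of $h(x)$ at the boundary is determined algebraically by $h(0)$ through the Einstein equations --- this is the Lorentzian analogue of the Fefferman--Graham construction --- so coincidence of leading terms already forces coincidence of the infinite jet. Choosing collar diffeomorphisms $\Phi_i\colon\clopen{0,\varepsilon}\times Y\hookrightarrow X_i$ that identify $x_i$ with the first factor, I conclude that $\Phi_1^*g_1$ and $\Phi_2^*g_2$ agree to infinite order at $x=0$.

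Second, set $e:=\Phi_1^*g_1-\Phi_2^*g_2$ and fix a generalized wave (harmonic) gauge in which both metrics satisfy the Einstein--hyperbolic reduced system. The difference $e$ then solves a homogeneous quasilinear wave-type system with coefficients that are smooth up to the boundary, and by the first step $e$ vanishes to infinite order at $\pX$. The generalized null convexity criterion of Definition~\ref{nullco} is precisely the hypothesis under which the Carleman-based unique continuation theorem of Holzegel--Shao \cite{Holzegel2023} applies to such boundary problems, and it forces $e\equiv 0$ on a two-sided collar of $Y$. Pulling back through $\Phi_1,\Phi_2$ and undoing the gauge fixing yields the isometry $(X_1',g_1)\simeq (X_2',g_2)$ on neighborhoods $X_i'$ of $Y$.

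Third, for the global statement under the no-conjugate-points assumption I would invoke the inverse result of Hintz--Uhlmann--Zhai \cite{Hintz2022a}. The local isometry from the previous step identifies the two spacetimes in an open strip adjacent to the boundary, and the no-conjugate-points hypothesis guarantees that null geodesics emanating from $Y$ cover a sufficiently large region so that the boundary data encoded by $\Lambda_g(\nu)$, viewed through its paired Lagrangian structure on $\Lambda_1$ as established in Theorem~\ref{thm:DN}, determine the bulk metric up to isometry on the domain of dependence of the boundary. Combined with the local identification this propagates to a global isometry.

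The principal obstacle I anticipate is the second step. One must (i) select a gauge stable under perturbations of boundary data in which the reduced Einstein system is genuinely hyperbolic up to the boundary and the null convexity condition transfers to the linearized operator governing $e$; (ii) check that infinite-order vanishing at $Y$ --- rather than a stronger decay condition or additional boundary data --- is sufficient input for the Holzegel--Shao Carleman estimate; and (iii) arrange compatibility of the two $x_i$-foliations at the boundary without destroying the Einstein condition in the identification. The first and third stages of the proof are, by contrast, essentially corollaries of Theorem~\ref{T.Taylor} and of the cited inverse theorem.
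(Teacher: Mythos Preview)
Your first two stages are essentially correct, though with some unnecessary complications. In Stage~1, Theorem~\ref{T.Taylor} already gives equality of the full Taylor series~\eqref{eq:Taylor} at all orders $k\geq 0$, so the Fefferman--Graham argument is redundant (and in fact not quite right: in odd boundary dimension the formal expansion is \emph{not} determined by $h(0)$ alone, as there is an undetermined piece at order~$d$). In Stage~2, you propose to gauge-fix and run Carleman estimates by hand, whereas the paper simply invokes the Holzegel--Shao theorem \cite[Thm.~1.5]{Holzegel2023} as a black box: it states directly that an Einstein aAdS metric satisfying the generalized null convexity criterion is determined near the boundary by its $(n-1)$-jet. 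The concerns you raise in (i)--(iii) are precisely what that theorem handles internally, so you are doing more work than necessary but not going wrong.

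Stage~3, however, has a genuine gap. The result of Hintz--Uhlmann--Zhai \cite{Hintz2022a} is an inverse theorem for the Dirichlet-to-Neumann map of a \emph{semi-linear} wave equation on a spacetime with smooth timelike boundary; it relies on the nonlinear interaction of multiple waves to probe the interior. It does not take as input the linear map $\Lambda_g(\nu)$ or its paired Lagrangian structure on $\Lambda_1$, and cannot be applied at the conformal boundary~$Y$ where the metric is singular. The paper's argument is different: the generalized null convexity criterion guarantees a foliation of a collar neighborhood by null-convex timelike hypersurfaces \cite{CS22}; one picks such a hypersurface $Y_1\simeq Y_2$ lying in the \emph{interior} of the region $X_1'\simeq X_2'$ where the local isometry is already known. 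Since the metrics agree on a two-sided neighborhood of this interior hypersurface, the ordinary Dirichlet-to-Neumann maps at $Y_i$ for any chosen semi-linear wave equation agree trivially, and only then does \cite{Hintz2022a} apply to give the global isometry. The passage to an interior hypersurface is the step you are missing.
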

  
  \begin{proof}
By Theorem \ref{T.Taylor}, there exists a diffeomorphism $\Phi_j: \clopen{0,\epsilon}\times Y\to X_j'$ in $X_j$ such that the pulled back metrics $\Phi_j^*g_j$ have the same Taylor series on $Y$. We can choose $\epsilon$ small enough that $X_j'$ is contained in the neighborhood $X_j''$ where the generalized null convexity criterion is known to hold. By the  unique continuation result of Holzegel--Shao \cite[Thm.~1.5]{Holzegel2023}, the metric $\Phi_j^*g_j|_{X_j'}$ is determined by its $(n-1)$-th order Taylor series on~$Y$, so we conclude that $\Phi_1^{-1}\circ\Phi_2$ is a diffeomorphism $(X_1',g_1)\to (X_2,g_2)$.

  \end{proof}
  
 \subsection{Dirichlet-to-Neumann map and conformal operators} Finally, again in the setting of smooth Einstein asymptotically anti-de Sitter spacetimes, we show an extension of the Graham--Zworski theorem to Lorentzian signature.
 
 We first state  a  direct analogue of \cite[Prop.~4.2]{grahamzworski}, the proof of which applies verbatim, with forward inverses replacing the resolvent.
 
\begin{proposition}\label{prop:gz2} Let $(X,g)$ be an Einstein asymptotically anti-de Sitter spacetime of dimension $n=1+d$, and let $f\in C^\infty_+(\p X)$. If $\nu=k\in \nn$, there is a formal solution of $P u = O(x^\infty)$ of the form 
$$
u = x^{k-\12}(F+ G x^{2k} \log x), \quad F,G\in C^\infty(X)
$$
with $F|_{\pX}= f$. Here $F$, resp.~$G$, is invariantly determined modulo $O(x^{2k})$, resp.~$O(x^{\infty})$. Moreover, 
$$
G|_{\pX} = - 2 c_k L_k f, \quad  c_k=(-1)^{k} \frac{1}{2^{2k} k! (k-1)!}, 
$$
where $L_k\in \Diff^{2k}(\pX)$ has principal part $\square_{h(0)}^k$. If $d\in 2\nn_0 +1$ or if $d\in 2\nn$ and $k\leq \frac{d}{2}$, then $L_k$  depends only on $h(0)$ and defines a conformally invariant operator.
\end{proposition}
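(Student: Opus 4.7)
The plan is to follow the formal power-series construction of Graham--Zworski \cite[Prop.~4.2]{grahamzworski} verbatim, with the Riemannian Laplacian replaced by the Lorentzian wave operator $\square_{h(0)}$. Working in the collar neighborhood $\clopen{0,\varepsilon}\times\pX$ with the special boundary defining function $x$ and using the expansion \eqref{eq:req}, I would take an ansatz at the indicial root $\alpha=\12-k$ of $N_k$, namely
$$u=x^{\12-k}F,\qquad F=\sum_{j\geq 0}F_j(y)x^j,\quad F_0=f,$$
and apply $P$ order by order in $x$. This produces the recursion
$$j(j-2k)F_j=\Psi_j(F_0,\dots,F_{j-1}),$$
where $\Psi_j$ is a universal differential-operator expression built from $\square_{h(0)}$, the Taylor coefficients of $h$ and $c$ at $x=0$, and the earlier $F_{j'}$. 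For $1\leq j<2k$ the prefactor is nonzero, so $F_j$ is uniquely determined as a differential operator of order $2j$ applied to $f$.

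At $j=2k$ the prefactor vanishes, producing the obstruction $\Psi_{2k}(F_0,\dots,F_{2k-1})$. To absorb it I would enlarge the ansatz to $u=x^{\12-k}(F+Gx^{2k}\log x)$ with $G=\sum_j G_j(y)x^j$; the $\p_x^2$ in $N_k$ acting on the $\log x$ factor produces a nonzero multiple $\alpha_k G_0\,x^{\12+k}$ of $G_0$ that must cancel the obstruction, yielding $\alpha_k G_0=\Psi_{2k}(F_0,\dots,F_{2k-1})$. Writing $G_0=-2c_k L_k f$ identifies $L_k\in\Diff^{2k}(\pX)$, and tracking the $\square_{h(0)}^j$-type principal parts at each recursion step gives that $L_k$ has principal part $\square_{h(0)}^k$ and that the overall constant is precisely $c_k=(-1)^k/(2^{2k}k!(k-1)!)$, coming from the product of the factors $1/(j(j-2k))$ for $j=1,\dots,2k-1$ accumulated at the principal-symbol level. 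This computation is the same as in the Riemannian case because these coefficients come purely from the indicial algebra of $N_k$. The remaining $F_j$ for $j>2k$ and $G_j$ for $j\geq 1$ are then recursively determined, giving the invariance statements for $F$ and $G$.

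For the conformal invariance in the range $d\in 2\nn_0+1$ or $d\in 2\nn$ with $k\leq d/2$, I would invoke two standard facts. First, the Einstein condition $\mathrm{Ric}_g+ng=0$ inductively forces the Taylor coefficients $h^{(j)}(0)$ with $0<j<2k$ to be universal polynomial expressions in $h(0)$ and its curvature (with the obstruction to continuing this expansion appearing only at order $j=d$ when $d$ is even); in the stated range of $k$ the operators $\Psi_j$ for $j\leq 2k$, and hence $L_k$, are therefore natural in $h(0)$. Second, under a boundary conformal rescaling $h(0)\mapsto e^{2\omega}h(0)$ there exists a new special boundary defining function $\hat x=xe^{\omega}(1+O(x^2))$ adapted to the rescaled boundary metric (and preserving our evenness-modulo-$O(x^3)$ hypothesis, as in \cite[Lem.~2.1]{guillarmou2005meromorphic}); applying our ansatz in both frames and comparing the induced $G_0$ forces $L_k$ to transform with a specific conformal weight, making it conformally invariant. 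The main thing to verify is that all of these tensorial identities are signature-independent, but this is immediate since the Riemannian proof in \cite[Prop.~4.2]{grahamzworski} uses only the algebraic structure of the Einstein expansion, the wave/Laplace operator, and conformal covariance of special boundary defining functions, and thus transfers to our Lorentzian setting without change.
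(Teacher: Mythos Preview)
Your proposal is correct and takes the same approach as the paper, which simply asserts that the proof of \cite[Prop.~4.2]{grahamzworski} applies verbatim; you have spelled out that formal power-series construction in more detail than the paper does, correctly noting that it is purely algebraic and hence signature-independent. Two small remarks: since $\square_{h(0)}$ couples $F_j$ only to $F_{j-2}$, the differential order of $F_j$ in $f$ is $j$ rather than $2j$, and the product of recursion factors yielding $c_k$ runs over even $j$ only; also, the paper's aside about ``forward inverses replacing the resolvent'' is in fact irrelevant to this purely formal statement (it matters for Proposition~\ref{prop:gz1}), and you are right not to invoke it.
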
 
 
In combination with our analysis this implies the following Graham--Zworski type theorem \cite{grahamzworski}. 
 
 \begin{theorem}\label{thm:gza}  Let $(X,g)$ be an Einstein asymptotically anti-de Sitter spacetime of dimension $n=1+d$ satisfying Hypothesis~\ref{hyp:global}. Let $k\in \nn$ and assume $k\leq \frac{d}{2}$ if $d$ is even. Then  
  \beq\label{eq:resi2}
   L_k  = (-1)^{k+1} 2^{2k} k!(k-1)!   \big( \lim_{\nu\to k} (\nu-k) \Lambda_g(\nu)\big), 
  \eeq
 is a conformally invariant differential operator in $\Diff^{2k}(\pX)$ with principal part $\square_{h(0)}^k$. Furthermore, the limit in \eqref{eq:resi} equals the residue of a meromorphic family of paired Lagrangian distributions that coincide with $\Lambda_g(\nu)$ for $\nu>0$.
 \end{theorem}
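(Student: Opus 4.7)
The plan is to combine the parametrix of Theorem \ref{thm:DN}, which displays $\Lambda_g(\nu)$ as a finite sum of $\nu$-meromorphic paired Lagrangian distributions modulo a smoothing error, with the Lorentzian Graham--Zworski formal solution of Proposition \ref{prop:gz2}, which produces $L_k$ as a log coefficient at integer $\nu=k$, and then match the two.

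First I would set up the meromorphic extension. Writing $\Lambda_g(\nu)=\sum_{j=0}^{N+1}\Lambda_{g,j}(\nu)+R_N(\nu)$ as in the proof of Theorem \ref{thm:DN}, with $\Lambda_{g,0}(\nu)=\cons(\square_{h(0)})^\nu$ and each $\Lambda_{g,j}(\nu)$ for $j\geq 1$ a composition of Hankel multipliers sandwiched between $\gamma_+$ and $\gamma_+^*$, the reductions of Section \ref{ss:reg} express each $\Lambda_{g,j}(\nu)$ as a finite sum of regularized integrals of the form $\fint_0^\infty \xi^{2\nu+1+\alpha} b(\xi)\,d\xi$ with $b\in \cI^{m,s}$. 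By Example \ref{ex:had} and iteration these are meromorphic in $\nu$ with poles contained in $\12\zz$, and the remainder $R_N(\nu)$ is $\nu$-analytic on a neighborhood of any $k\in\nn$ once $N$ is chosen sufficiently large, since its source is supported away from $\pX$ and the $\nu$-dependence factors through Theorem \ref{thm:Pinv}. This produces a meromorphic family in $I^{\nu-\12,\nu+\12}(\pX\times\pX;\Lambda_0,\Lambda_1)$ coinciding with $\Lambda_g(\nu)$ for $\nu>0$, $\nu\notin\12\nn$.

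Next I would compute the residue at $\nu=k$ and identify it with $L_k$. For the leading piece, $(\square_{h(0)})^\nu$ is entire by Definition \ref{def:cp} and reduces to the honest iterated power $\square_{h(0)}^k$ at $\nu=k$, while $\cons=2^{-2\nu}\Gamma(-\nu)/\Gamma(\nu)$ contributes a simple pole with $\lim_{\nu\to k}(\nu-k)\cons=(-1)^{k+1}/(2^{2k}k!(k-1)!)$, so the principal part of the residue is exactly $\square_{h(0)}^k$, matching the principal part of $L_k$. For the full residue, I would adapt Graham--Zworski's argument. Since $\cP_+(\nu)$ is real-analytic in $\nu$ (Proposition \ref{prop:gz1}), the expansion $\cP_+(\nu)f=x^{\12-\nu}F_\nu+x^{\12+\nu}G_\nu+\text{l.o.t.}$ with $G_\nu|_{\pX}=\Lambda_g(\nu)f$ must have the pole of $G_\nu$ at $\nu=k$ canceled by a matching singularity in the Taylor expansion of $F_\nu$. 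Expanding $x^{2\nu}=x^{2k}(1+2(\nu-k)\log x+O((\nu-k)^2))$ and extracting the coefficient of $x^{\12+k}\log x$ in the limit yields $2\lim_{\nu\to k}(\nu-k)G_\nu|_{\pX}$, which by Proposition \ref{prop:gz2} equals the log coefficient $-2c_k L_k f$ with $c_k=(-1)^k/(2^{2k}k!(k-1)!)$. This gives $\lim_{\nu\to k}(\nu-k)\Lambda_g(\nu)=-c_k L_k$, which rearranges to \eqref{eq:resi2} and simultaneously confirms that the lower-order contributions from $\Lambda_{g,j}(\nu)$, $j\geq 1$, assemble into the remaining terms of $L_k$. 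Conformal invariance and membership in $\Diff^{2k}(\pX)$ are then inherited from Proposition \ref{prop:gz2}.

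The main obstacle is the pole-cancellation bookkeeping in the Poisson operator as $\nu \to k$. While $\cP_+(\nu)$ is holomorphic in $\nu$, its two indicial coefficients $F_\nu$ and $G_\nu$ are only meromorphic individually, and one must verify that the singular Taylor coefficient of $F_\nu$ at order $x^{2k}$ and the residue of $G_\nu$ cancel exactly to produce the log term with the constant predicted by Proposition \ref{prop:gz2}. In the Riemannian setting \cite{grahamzworski} this is immediate from the meromorphic resolvent, whereas here it has to be read off through the parametrix construction; the $\b$-calculus mapping properties and the fact that $R_N(\nu)$ has range in high-regularity spaces should make the required identification local at $\pX$ and thus accessible from the formal expansion of $P$. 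Once this is in place, \eqref{eq:resi2} follows and the conformal-invariance and differential-operator properties of $L_k$ transfer directly to the residue.
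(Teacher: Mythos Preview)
Your proposal is essentially the same two-step strategy as the paper's: (i) use the parametrix decomposition $\Lambda_g(\nu)=\sum_{j=0}^{N+1}\Lambda_{g,j}(\nu)+R_N(\nu)$ from Theorem~\ref{thm:DN} to exhibit the meromorphic family of paired Lagrangian distributions, and (ii) rerun the Graham--Zworski pole-cancellation argument for the Poisson operator $\cP_+(\nu)$ (Propositions~\ref{prop:gz1} and~\ref{prop:gz2}) to pin down the residue as $-c_kL_k$. Your computation of $\lim_{\nu\to k}(\nu-k)\cons$ and the log-matching via $x^{2\nu}=x^{2k}(1+2(\nu-k)\log x+\cdots)$ are correct and make explicit what the paper leaves to the citation of~\cite{grahamzworski}.

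One point where you should be more careful: you assert that $R_N(\nu)$ is $\nu$-analytic near $\nu=k$ because ``the $\nu$-dependence factors through Theorem~\ref{thm:Pinv}''. But Theorem~\ref{thm:Pinv} for the \emph{non-product} metric gives $P_{\dir,+}^{-1}(\nu)$ only for $\nu>0$ real; the complex-$\nu$ statement is for product metrics only. What you actually need (and what the paper uses) is that the parametrix $\Lambda_g^{(N)}(\nu)=\sum_j\Lambda_{g,j}(\nu)$ is built entirely from product-case inverses $(\square_{h(0)}+\xi^2)^{-1}$ and is therefore meromorphic in $\nu\in\cc$, while $R_N(\nu)$ is merely real-analytic on $\nu>0$ (via Proposition~\ref{prop:gz1}) and regular at $\nu=k$. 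This is enough: the limit $\lim_{\nu\to k}(\nu-k)\Lambda_g(\nu)$ along real $\nu$ is carried by $\Lambda_g^{(N)}$, and the latter's residue then agrees with the complex-meromorphic residue. With that adjustment your argument goes through.
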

 \begin{proof} At this point the arguments from the proof of \cite[Thm.~1]{grahamzworski} apply to our setting with only minor alterations, namely, with Propositions 3.5 and 4.2 therein  replaced by  Propositions \ref{prop:gz1} and \ref{prop:gz2} respectively to yield \eqref{eq:resi2}. Note that the key Proposition 3.6 in \cite{grahamzworski} which computes the residue of $\Lambda_g(\nu)$  simplifies in our setting as there are no contributions from eigenvectors. Next, the parametrix  $\Lambda_g^{(N)}(\nu):=\sum_{j=0}^{N+1} \Lambda_{g,j}(\nu)$ constructed in the proof of  Theorem \ref{thm:DN} is well defined as a meromorphic function of $\nu$ as  its definition refers to the family of forward inverses in the product case  (see Theorem \ref{thm:Pinv}). For  $\nu$ real  the only contribution to the limit in \eqref{eq:resi2} comes from   $\Lambda_g^{(N)}(\nu)$---in fact from the product case term $\Lambda_{g,0}(\nu)$. Thus the same is true for the pole of the analytic continuation of  $\Lambda_g(\nu)$ by real analyticity.
 \end{proof} 
 
 \appendix
 
\section{Energy estimates for complex $\nu$} \label{sec:app}
 
 \subsection{Preliminaries} Let $\Re \nu >0$. In coordinates $$
 (z^0,z^1,\ldots, z^{n-1})=(x,y^0,\ldots,y^{n-2})$$
 near $\pX$  as in   \sec{ss:AdS} we introduce the following notation for twisted derivatives:
 $$
 Q_i= x^{-\nu+\12}  \p_{z_i}  x^{\nu-\12}\in \Diff_\nu^1(X), \ \ i=0,\ldots, n-1,
 $$
 so that $Q_0$ depends on $\nu$ but $Q_i$ are just usual derivatives. Note that in the complex case,  $N_\nu= - \p_x^2 + \big(\nu^2-{\textstyle\frac{1}{4}}\big){x^{-2}}={\bar Q_0^*} { Q_0}$; the complex conjugate serves to replace $\nu$ by {$\bar\nu$} so that the expression is analytic in $\nu$.

 The   twisted Sobolev space $H^1_\nu(X)$ norm can be written as
 $$
 \| u \|_{H^1_\nu}= \| u \|_{L^2} +  \bra \delta^{ij} Q_i u, Q_j u\ket_{L^2}, 
 $$
  where repeated indexes are summed from $0$ to $d=n-1$ and we stress again that the index zero corresponds to the $x$ variable (rather than $t$ as common in relativity). 
  
When $\nu\notin\rr$, the natural replacement of the Dirichlet form of $P=P(\nu)$ is
$$
  \mathcal{E}_0(u,v)= \langle P_\dir u ,v \rangle_{L^2} = -\langle \hat g^{ij} Q_i u , {\bar Q_j} v \rangle_{L^2}.
   $$
 It is however ill-suited for energy estimates, the main reason being that it does not appear to be related in any useful way to a positive-definite quantity such as $\| u \|_{H^1_\nu}$ since $\bar Q_0\neq Q_0$.

This motivates us to consider a modification involving non-differential expressions. Recall that the Hankel transform $\cH_\nu$ was introduced in \S\ref{ss:hankel} for $\Re \nu >-1$ and satisfies $\cH_\nu^2=\one$ and $\cH_\nu^*=\cH_{\bar\nu}$.  Furthermore, $Q_0=\cH_{\nu-1}^{-1} \xi \cH_\nu$. Let 
$$
\cI_\nu:=\cH_{\bar \nu}^{-1} \cH_\nu= \cH_\nu^* \cH_\nu
 $$
 understood as an operator acting on $L^2(X)$ functions supported close to $\pX$.  We introduce a modified Dirichlet form
$$
2  \mathcal{\tilde E}_0(u,v):=  -\langle \hat g^{ij} Q_i u , {\bar Q_j} \cI_\nu v \rangle_{L^2}-\langle \hat g^{ij} {\bar Q_i} \cI_\nu u ,  Q_j  v \rangle_{L^2}.
   $$ 
 The main motivation is that
\beq\label{eq:idqq0}
\bar Q_0 \cI_{\nu}  = \cI_{\nu-1} Q_0,  \quad \bar Q_j \cI_{\nu}  = Q_j \cI_\nu= \cI_{\nu} Q_j,  \  \ j =1,\dots,d,
 \eeq
 hence
  \beq\label{eq:idqq}
 Q_0^* \bar Q_0 \cI_{\nu}  =  Q_0^* \cI_{\nu-1} Q_0\geq 0, \quad Q_j^* \bar Q_j \cI_{\nu}  =  Q_j^* \cI_{\nu} Q_j\geq 0, \  \ j =0,\dots,d,
  \eeq
Furthermore,
$$
Q_i^* \bar Q_j \cI_{\nu}= \cI_{\nu} \bar Q_i^*  Q_j \mbox{ if } i=j=0 \mbox{ or } i,j\neq 0.    
$$

\begin{lemma} Let
$$
\| u \|_{\tilde H^1_\nu}:= \| u \|_{L^2} +  \bra \delta^{ij} Q_i u, \bar Q_j  \cI_{\nu} u \ket_{L^2}.
$$
 Then the norm $\| \cdot \|_{\tilde H^1_\nu}$ is equivalent to the twisted Sobolev norm $\| \cdot \|_{H^1_\nu}$.
\end{lemma}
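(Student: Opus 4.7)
The plan is to expand the bilinear part of $\|u\|_{\tilde H^1_\nu}$ using the commutation identities \eqref{eq:idqq0} to reduce it to squared $L^2$-norms of $\cH_{\nu'} Q_j u$ for $\nu' \in \{\nu, \nu-1\}$, and then to invoke the $L^2$-boundedness and bounded invertibility of the Hankel transforms $\cH_{\nu}$ and $\cH_{\nu-1}$ to obtain two-sided bounds in terms of $\sum_j \|Q_j u\|^2_{L^2}$.

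Concretely, since $\delta^{ij}$ is diagonal,
$$
\bra \delta^{ij} Q_i u, \bar Q_j \cI_{\nu} u \ket_{L^2} = \sum_{j=0}^{d} \bra Q_j u, \bar Q_j \cI_\nu u \ket_{L^2}.
$$
I would then apply the identities in \eqref{eq:idqq0} termwise: for $j\geq 1$ one has $\bar Q_j \cI_\nu = \cI_\nu Q_j$, while for $j=0$ one has $\bar Q_0 \cI_\nu = \cI_{\nu-1} Q_0$. Exploiting the self-adjoint factorization $\cI_{\nu'} = \cH_{\nu'}^{*}\cH_{\nu'}$, each of the resulting inner products collapses to a squared $L^2$-norm, yielding the explicit identity
$$
\bra \delta^{ij} Q_i u, \bar Q_j \cI_{\nu} u \ket_{L^2} = \|\cH_{\nu-1} Q_0 u\|^2_{L^2} + \sum_{j=1}^{d} \|\cH_{\nu} Q_j u\|^2_{L^2}.
$$

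The second step is to recall from \sec{ss:hankel} that for every $\nu'$ with $\Re \nu' > -1$ the Hankel transform $\cH_{\nu'}$ is a bounded operator on $L^2(\rr_+)$ satisfying $\cH_{\nu'}^{-1} = \cH_{\nu'}$, and is therefore boundedly invertible. This provides two-sided bounds $c_{\nu'}\|v\|_{L^2} \leq \|\cH_{\nu'} v\|_{L^2} \leq C_{\nu'}\|v\|_{L^2}$. Under the standing hypothesis $\Re \nu > 0$, both $\nu$ and $\nu-1$ lie in the admissible range $\Re \nu' > -1$, so applying these bounds with $v = Q_j u$ (and with $v = Q_0 u$) and summing in $j$ shows that the bilinear form above is equivalent to $\sum_{j=0}^{d} \|Q_j u\|^2_{L^2}$. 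Adding $\|u\|_{L^2}$ on both sides then establishes $\|\cdot\|_{\tilde H^1_\nu} \sim \|\cdot\|_{H^1_\nu}$.

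I do not anticipate any substantive obstacle: the two algebraic ingredients, namely the commutation relations \eqref{eq:idqq0} and the standard $L^2$-boundedness together with the inversion formula $\cH_{\nu'}^{-1}=\cH_{\nu'}$, are both already at our disposal. The only mild point of care is that $\cI_\nu$ is defined only on functions supported near $\p X$, but this is precisely the setting in which the sesquilinear form in question naturally appears and is handled by the same localization used throughout Appendix~\ref{sec:app}.
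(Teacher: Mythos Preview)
Your proposal is correct and follows essentially the same route as the paper's proof: both use the commutation identities \eqref{eq:idqq0}/\eqref{eq:idqq} to rewrite the bilinear part as $\sum_j \langle Q_j u, \cI_{\nu(j)} Q_j u\rangle_{L^2}$ with $\nu(0)=\nu-1$, $\nu(j)=\nu$ for $j\geq 1$, and then invoke the $L^2$-boundedness and invertibility of the relevant operators. The only cosmetic difference is that you factor $\cI_{\nu'}=\cH_{\nu'}^*\cH_{\nu'}$ explicitly and phrase the two-sided bounds in terms of $\cH_{\nu'}$, whereas the paper states them directly for $\cI_{\nu'}$; these are equivalent.
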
 
\begin{proof} This follows easily from \eqref{eq:idqq} combined with boundedness and invertibility of $\cI_{\nu-1}$ and $\cI_\nu$.
\end{proof}

\begin{lemma}\label{lem:com} Let $f\in C^1(X)\cap L^\infty(X)$ and denote in the same way the corresponding multiplication operator. Then for $\Re \nu >-1$, $[\cI_\nu, f] x\p_x \in B(L^2(X))$ and
\beq
\bea {}
\| [\cI_\nu, f]\|_{B(L^2(X))} & \leq C \| x f'\|_{L^\infty}, \\   \|[\cI_\nu, f] x\p_x\|_{B(L^2(X))} &\leq C (\| x f'\|_{L^\infty}  +  \| x^2 f''\|_{L^\infty}).
\eea
 \eeq
 The same conclusion holds true if $\cI_\nu$ is replaced by $\cI_\nu^{\12}$. 
\end{lemma}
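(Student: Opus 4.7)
The plan is to reduce both estimates, via the Mellin transform in the $x$-variable, to standard commutator bounds for Fourier multipliers on $L^2(\rr)$.

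Since $\cI_\nu$ and $\cI_\nu^{\12}$ act only in the $x$-direction and $f$ is a multiplication operator, both bounds reduce to estimates on $L^2(\rr_+, dx)$ uniformly in $y$. Under the unitary map $u(x) \mapsto U(t) := e^{t/2} u(e^t)$ from $L^2(\rr_+, dx)$ to $L^2(\rr_t, dt)$, the fact that $(x\xi)^{\12} J_\nu(x\xi)$ depends only on $\log x + \log\xi$ identifies $\cH_\nu$ on the Fourier-in-$t$ side with the operator $\hat U(\tau) \mapsto m_\nu(\tau)\hat U(-\tau)$, where
$$
m_\nu(\tau) = 2^{-i\tau}\,\Gamma\!\big(\tfrac{1+\nu-i\tau}{2}\big)\big/\Gamma\!\big(\tfrac{1+\nu+i\tau}{2}\big).
$$
Consequently $\cI_\nu$ is the Fourier multiplier $M_\nu(D_t)$ with symbol $M_\nu(\tau) := m_{\bar\nu}(\tau) m_\nu(-\tau)$; the operator $x\p_x$ corresponds to $\p_t - \12$ (a Fourier multiplier commuting with $M_\nu(D_t)$); and multiplication by $f$ becomes multiplication by $F(t) := f(e^t)$ with $\|F'\|_\infty = \|x f'\|_\infty$ and $\|F''\|_\infty \leq \|xf'\|_\infty + \|x^2 f''\|_\infty$.

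The key symbolic input is that $M_\nu \in S^0_{1,0}(\rr)$. Writing $\nu = \alpha + i\beta$ one has
$$
M_\nu(\tau) = \big|\Gamma\!\big(\tfrac{1+\alpha}{2} + i\tfrac{\tau+\beta}{2}\big)\big|^2 \big/ \big|\Gamma\!\big(\tfrac{1+\alpha}{2} + i\tfrac{\tau-\beta}{2}\big)\big|^2,
$$
and Stirling's asymptotics give $M_\nu(\tau) \to e^{\mp\pi\beta}$ as $\tau \to \pm\infty$ together with $|\p_\tau^k M_\nu(\tau)| \leq C_k(1+|\tau|)^{-k}$ for all $k$. The crucial cancellation is that the log-derivatives of $m_\nu(\tau)$ and $m_\nu(-\tau)$ each grow like $\log|\tau|$ with opposite signs, so they cancel in the product $M_\nu = m_{\bar\nu}\cdot m_\nu(-\cdot)$. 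Since $M_\nu > 0$ everywhere on $\rr$, $M_\nu^{\12}$ lies in the same symbol class, which handles the $\cI_\nu^{\12}$ case identically.

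With $M_\nu \in S^0_{1,0}$, the first bound follows from the classical Calder\'on-type commutator estimate $\|[M(D_t), F]\|_{B(L^2)} \leq C \|F'\|_\infty$ for $M \in S^0_{1,0}$ and Lipschitz $F$ (Coifman--Meyer; or directly by writing the Fourier-side kernel $\hat F(\tau - \sigma)(M(\tau) - M(\sigma))$ with $M(\tau) - M(\sigma) = (\tau - \sigma)\int_0^1 M'(\sigma + r(\tau-\sigma))\,dr$ and $(\tau-\sigma)\hat F(\tau - \sigma) = -i\widehat{F'}(\tau - \sigma)$). For the second bound, use the operator identity
$$
[M_\nu(D_t), F]\,\p_t = i\,[N_\nu(D_t), F] - M_\nu(D_t)\, F',
$$
with $N_\nu(\tau) := \tau M_\nu(\tau)$, which follows from $N_\nu(D_t) = M_\nu(D_t) D_t$ and $[D_t, F] = -iF'$. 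Since $N_\nu'(\tau) = M_\nu(\tau) + \tau M_\nu'(\tau) \in L^\infty(\rr)$ (precisely the $S^0_{1,0}$ condition on $M_\nu$), the same Calder\'on-type argument applied to $N_\nu$ gives $\|[N_\nu(D_t), F]\|_{B(L^2)} \leq C\|F'\|_\infty$, and combined with $\|M_\nu(D_t) F'\|_{B(L^2)} \leq C\|F'\|_\infty$ this yields $\|[\cI_\nu, f] x\p_x\|_{B(L^2)} \leq C\|xf'\|_\infty$, which is stronger than the claimed bound. The main obstacle is establishing the $S^0_{1,0}$ symbol bound on $M_\nu$ via the Stirling-based cancellation between $m_{\bar\nu}(\tau)$ and $m_\nu(-\tau)$; once this is in place the commutator estimates are standard.
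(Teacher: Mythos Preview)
Your approach is essentially the paper's: both recognize that $\cI_\nu$ is a function of the dilation generator $D = i^{-1}(x\partial_x + \tfrac{1}{2})$ (your Mellin conjugation is precisely this identification, with your $D_t$ equal to the paper's $D$), compute the explicit multiplier via the Mellin transform of $\cJ_\nu$, and use Stirling to place it in $S^0_{1,0}(\rr)$. Your $M_\nu$ coincides with the paper's $g_\nu$ up to a sign in the argument.

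The implementations differ only in the commutator step. The paper invokes the abstract commutator expansion $[g_\nu(D), f] = [D,f]\,g_\nu'(D) + R$ from Derezi\'nski--G\'erard, together with remainder bounds involving ${\rm ad}_D^j f$ for $j=1,2$; this is why $\|x^2 f''\|_\infty$ appears in the second estimate. You instead appeal to the Coifman--Meyer/Calder\'on commutator bound for $S^0$ multipliers against Lipschitz functions, and for the second estimate use the algebraic identity $[M_\nu(D_t), F]\,\partial_t = i[N_\nu(D_t),F] - M_\nu(D_t)F'$ with $N_\nu(\tau)=\tau M_\nu(\tau)\in S^1_{1,0}$. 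This is a legitimate variant: it avoids the second iterated commutator and even yields the slightly stronger bound $\|[\cI_\nu,f]\,x\partial_x\|\leq C\|xf'\|_\infty$, which is harmless since the lemma is only used qualitatively downstream. One caution: the parenthetical ``direct'' kernel argument you sketch (rewriting the Fourier-side kernel via $(\tau-\sigma)\hat F(\tau-\sigma)=-i\widehat{F'}(\tau-\sigma)$) does not by itself give $L^2$-boundedness from $N_\nu'\in L^\infty$ alone; you genuinely need the full $S^1_{1,0}$ hypothesis and the Coifman--Meyer machinery (or an equivalent), so cite that result rather than the heuristic.
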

\begin{proof} By  \cite[Prop.~4.5]{Derezinski2017} (cf.~\cite[Thm.~6.2]{Bruneau2011}), $\cI_\nu=g_\nu(D)$, where $D= \12 (xD_x + D_x x)=i^{-1}(x\p_x +\12)$ and $g_\nu$ is the real function
$$
g_\nu(s)=\frac{\Gamma\left(\frac{\nu+1-i s}{2}\right)\Gamma\left(\frac{\bar{\nu}+1+i s}{2}\right)}{\Gamma\left(\frac{\nu+1+i s}{2}\right)\Gamma\left(\frac{\bar{\nu}+1-i s}{2}\right)}.
$$ 
Using well-known asymptotics of the gamma function and digamma function we can show that $g_\nu\in S^0(\rr)$. By direct inspection the iterated commutators satisfy 
\beq\label{adD}
\| {\rm ad}^j_D f \|_{B(L^2(X))}\leq C (\| x f'\|_{L^\infty}  +  \| x^2 f''\|_{L^\infty}), \ \ j=1,2,
\eeq
in particular ${\rm ad}^j_D f $ is bounded if the RHS is finite. Supposing it is, we can apply the commutator expansion \cite[Lem.~C.3.1]{DGscattering} (strictly speaking its direct analogue where the order of operators is reversed, which follows from the same arguments) with $\rho=0$, $N=0$ and  then $N=1$, which gives
$$
[g_\nu(D),f]= [D,f] g'_\nu(D) + R(f,g_\nu,D)
$$
where $\|  R(f,g_\nu,D)D^{j} \|_{B(L^2(X))}\leq C \| {\rm ad}^j_D f \|_{B(L^2(X))}$, $j=1,2$, with $C$ depending only on some $S^0(\rr)$ seminorms of $g_\nu$.  Furthermore, since $g'_\nu(s)=O(|s|^{-1})$ as $|s|\to\infty$,  
$$
\|[D,f] g'_\nu(D) D\|\leq C \| {\rm ad}_D f\|_{B(L^2(X))}.
$$
In combination with \eqref{adD} we conclude the assertion for $\cI_\nu$. The proof for $\cI_\nu^\12$ is then analogous. 
\end{proof}

 \subsection{Energy estimates}  Suppose  $V \in \Diff^1_\b(X)$ is a real vector field with compact support. We denote $F=x^{-\nu+\12}$ and set 
  \beq\label{twistedV}
  V' := FVF^{-1} \in \Diffb^{1}(X), \quad \bar V' := \bar{F}V\bar F^{-1} \in \Diffb^{1}(X).
  \eeq
We will encounter expressions involving  the  commutator $[Q_i,V']$, which can be computed as follows: if $V = V^j \p_{z^j}$, then
\beq\label{eq:QV}
\bea {}
  [Q_i,V'] &= F[D_{z^i},F^{-1} V' F]F^{-1} \\ &= F[ D_{z^i}, V] F^{-1} 
  \\ & = \p_{z^i}(V^k)Q_k.
  \eea
\eeq
 Similarly, $[\bar Q_i,\bar V']=\p_{z^i}(V^k)\bar Q_k$.

   \medskip
   
   \noindent\textbf{Proof of  Theorem \ref{thm:Pinv}.}  The proof modifies the arguments in \cite{GW}, which in turns build on \cite{vasy2008propagation,warnick2013massive,vasy2012wave}, cf.~\cite{Dappiaggi2021} for a more complete and  general exposition which we follow  here.  
   
   We take $V=\phi W$ for a suitable function $\phi$, a real $\b$-vector field $W$ constructed as follows.
   
   By the global hyperbolicity assumption in Hypothesis \ref{hyp:global} and  there exists a Cauchy temporal function $t$ (in particular $\nabla_{\hat g} t$ is time-like) such that $W:=\nabla_{\hat g} t$ is tangent to $\p X$, see \cite{sanchez}.    For fixed $t_0<t_1$ we work on the interval $\open{t_0,t_1}$ and for small $\delta>0$ and $\varepsilon>0$ define the smooth function
   $$
  \chi(s)= \chi_0(-\delta(s-t_1))\chi_1((s-t_0)/\varepsilon), 
   $$  
   where $\chi_1$ monotonically interpolates between $0$ on $\opencl{-\infty,0}$ and $1$ on  $\clopen{1,\infty}$, and $\chi_0(s)=\theta(s)\exp(s^{-1})$. Then $\chi\geq 0$, $\chi'\leq 0$ and
   $$
   \chi \leq  - \delta (t_1-t_0)^2 \chi'.
   $$ 
  We then take $\phi:=\psi (\chi \circ t)$ where $\psi\in C_{\rm c}^\infty(X)$ equals $1$ on a neighborhood $U_1$ of $\pX$, and supported in a neighborhood $U$ which in the sequel will be taken  as small as needed, in particular small enough to be contained in a chart neighborhood with coordinates $(x,y)$ as in \sec{ss:AdS}.  Then $V=\phi W\in \Diff^1_\b(X)$ is timelike in a neighborhood of $U_1$.
      
    Let $u\in H^{1,1}_{\nu,\b,\loc}(X)$  be supported in $[t_0+\varepsilon, t_1]\times Y$. Without loss of generality we can assume that $u$ is supported in a small neighborhood of $U$; otherwise we can replace $u$ by $\varphi u$ for an appropriate test function equal $1$ on a smaller neighborhood of $U$ and use support properties of $V$ in formulas involving $Pu$ below to get rid of $\varphi$ whenever it matters.  
    
 Let $V',\bar V'\in \Diff_\b^1(X)$ be the two twisted versions of $V$ defined in \eqref{twistedV}.  We consider the following generalized commutator expression:
  \beq\label{gencom}
  \bea
\mathcal{\tilde E}_0(u, \bar V' u) +\mathcal{\tilde E}_0( V' u,   u)&= -\langle \hat g^{ij} Q_i   u , {\bar Q_j} \cI_{\nu}  \bar V ' u \rangle_{L^2}  -\langle \hat g^{ij} Q_i V' u , {\bar Q_j} \cI_{\nu} u \rangle_{L^2}
\eea 
 \eeq
 On the one hand,  by Cauchy--Schwarz and Young inequality applied twice,
 \beq \label{eq:cs}
 \bea
   \big|\mathcal{\tilde E}_0(u, \bar V' u) +\mathcal{\tilde E}_0(  V' u,   u)\big|&= \big| \langle \hat g^{ij} Q_i   u , {\bar Q_j} \cI_{\nu}  \bar V' u \rangle_{L^2}  + \langle \hat g^{ij} \bar Q_i  \cI_{\nu} V' u , {Q_j}  u \rangle_{L^2}  
 \\
   & \leq C\big( \|  \phi^{1/2} FWF^{-1}  P u \|^2_{H_\nu^{-1}}  + \| \phi^{1/2} u\|^2_{H_\nu^{1}}  + \| \phi^{1/2}  Pu \|_{L^2}^2 \fantom + \| \phi^{1/2} FWF^{-1}  u\|_{L^2}^2   
 \eea
 \eeq
 In the first step we have used the commutations relations \eqref{eq:idqq0} and also $[\cI_\nu,\hat g^{ij}]=0$, as follows from the assumption that $\hat g$ is a product metric. 
 On the other hand, we claim that we can rewrite \eqref{gencom} as a stress energy tensor term modulo smaller size terms that can be absorbed. To that end we   compute 
 \beq\label{gencom2}
 \bea {}
   \mathcal{\tilde E}_0(u, \bar V' u) +\mathcal{\tilde E}_0( V' u,   u)&= \langle \hat g^{ij} Q_i   u , {\bar Q_j} [\cI_{\nu} ,\bar V '] u \rangle_{L^2}  + \langle \hat g^{ij} Q_j u , [{\bar Q_i},{\bar V'}] \cI_{\nu} u \rangle_{L^2} \fantom + \langle \hat g^{ij} [ Q_j,V']u, {\bar Q_i} \cI_{\nu} u  \rangle_{L^2} +  \langle [\hat g^{ij} ,V']Q_j u, {\bar Q_i} \cI_{\nu} u \rangle_{L^2} \fantom + \langle (V'+(\bar V')^*) \hat g^{ij} Q_j u , {\bar Q_i} \cI_{\nu} u \rangle_{L^2}.
  \eea
\eeq
Observe that $[\hat g^{ij},V'] = [\hat g^{ij}, V] {=[\hat g^{ij},\bar {V'}]}$. Moreover, ${(\bar V')^*} = { F^{-1}} V^*  { F}$ and thus 
$$
\bea
 {(\bar V')^*} &= -{ F^{-1}}V { F} - {\rm div}_{\hat g} V + (n-2)x^{-1}V(x) \\
 &= -V + { F}V({ F^{-1}}) - {\rm div}_{\hat g}V + (n-2)x^{-1}V(x).
\eea
$$
 Note also $V'=FVF^{-1}=V+ F V(F^{-1})$. Recalling that $V=\phi W$, this serves to show that \eqref{gencom2} equals
 \begin{equation} \label{eq:energycommutator}
  \langle (2T^{ij} + K^{ij}) Q_i u, {\bar Q_j} \cI_{\nu} u\rangle_{L^2} +  \langle \hat g^{ij} Q_i   u , {\bar Q_j} [\cI_{\nu} ,\bar V '] u \rangle_{L^2} 
 \end{equation}
 where 
 the tensor $K$ is
 \begin{equation}
 K = - (\phi\cdot {\rm div}_{\widehat{g}}W + 2 F\phi V(F^{-1})+ (n-2)\phi x^{-1}W(x) ) \widehat{g}^{-1} 
 + \phi\mathcal{L}_W \widehat{g}^{-1}
 \end{equation}
 and $2T= 2 T_{\hat g}(W,\nabla_{\widehat{g}} \phi)$ is two times the stress-energy tensor with respect to $\hat g$ associated with $W$ and $\nabla_{\widehat{g}} \phi$, i.e.
 $$
 T_{\hat g}(W, \nabla_{\hat g} \phi) =  (\nabla_{\hat g}\phi) \otimes_s W - \tfrac{1}{2} \hat g(\nabla_{\hat g} \phi,W) \cdot  \hat g^{-1}.
 $$
 
 We can rewrite  \eqref{eq:energycommutator} as
 \beq\label{eq:ec2}
 \bea {}
&2\bra T^{ij}  \cI_{\nu(i)}^{1/2}Q_i u, \cI_{\nu(j)}^{1/2}Q_j u \ket_{L^2}  +   \langle  K^{ij} Q_i u, {\bar Q_j} \cI_{\nu} v\rangle_{L^2} \\ &  + 2 \langle [\cI_{\nu(i)}^{1/2},  T^{ij}  ]  Q_i u, \cI_{\nu(j)}^{1/2} {Q_j}  v\rangle_{L^2} +  \langle \hat g^{ij} Q_i   u , {\bar Q_j} [\cI_{\nu} ,\bar V '] u \rangle_{L^2} 
\eea
 \eeq
 where  $\nu(0)=\nu-1$ and $\nu(j)=\nu$ for $j=1,\dots,d$. Since $W$ is strictly time-like in the region of interest, the stress-energy tensor term involving $T^{ij}$ is negative definite and controls a homogeneous version of the $H_\nu^1$ norm of  $(-\phi')^{1/2} u $.  The arguments in \cite{GW} leading to a twisted version of the Poincaré inequality \cite[Prop.~2.5]{vasy2012wave} remain valid, so we can also control $L^2$ norms and deduce the estimate 
 $$
-\bra T^{ij}  \cI_{\nu(i)}^{1/2}Q_i u, \cI_{\nu(j)}^{1/2}Q_j u \ket_{L^2}  \geq      C   \| (-\phi')^{1/2} u \|_{H_\nu^1}^2
 $$
 for the actual $H_\nu^1$ norm.
 The commutator terms in  \eqref{eq:ec2} can be absorbed into the $H_\nu^1$ norm. Indeed, after  possibly shrinking $U$ and taking $\lambda$ small enough, 
 $$
 \| [\cI_{\nu(i)}^{1/2},  T^{ij}  ]\|_{B(L^2(U))} \leq \delta, \quad  \| [\cI_{\nu},  \bar V' ]\|_{B(L^2(U))} \leq \delta
 $$ 
 by Lemma \ref{lem:com}  and by writing  $\bar V'$ as a $\b$-differential operator and using that $[\cI_{\nu}, x\p_x]=0$. 
  Finally, the $K^{ij}$ term can be absorbed as well since it is estimated by $\| \phi^{1/2}u \|_{H_\nu^1}$ with $\phi^{1/2}$ small relative to  $(-\phi')^{1/2}$. Then by \eqref{eq:cs}  this is controlled by $\|    P u \|^2_{H_{\nu,\b}^{-1,1}}$ plus terms which are small relative to  $\| (-\phi')^{1/2}u \|_{H_\nu^1}$ and which consequently can be absorbed. In summary, 
 $$
\|(-\phi')^{1/2} u \|_{H_\nu^1}^2\leq C \|    P u \|^2_{H_{\nu,\b}^{-1,1}}.
 $$
  From that point on the abstract functional analytic arguments in \cite{vasy2012wave} can be repeated verbatim. 
  
 Finally, the microlocal propagation estimates leading to well-posedness in spaces with arbitrary conormal regularity in   \cite{GW,vasy2008propagation} can be easily adapted through  energy form modified with $\cI_{\nu}$ as before, the main point being that additional commutators of $\cI_{\nu}$ with elements of $\Psi^m_\b(X)$ are controlled  using Lemma \ref{lem:com}.
  \qed

 \medskip
 
 {\small
 \subsubsection*{Acknowledgments} The authors thank Yuchao Yi for helpful remarks. This work has received funding from the European Research Council (ERC) under the European Union's Horizon 2020 research and innovation program through the Consolidator Grant agreement~862342 (A.E.). A.E.'s research is also partially supported by the grants 
 CEX2023-001347-S and PID2022-136795\-NB-I00. The research of G.U.~is partly supported by NSF and a Robert R.~Phelps and Elaine F.~Phelps Professorship at University of Washington. M.W.~gratefully acknowledges support from the grant ANR-20-CE40-0018.}
 
 \bibliographystyle{abbrv}
 \bibliography{adsinverse}

 \end{document}